\newtheorem{theorem}{Theorem}[section]
\newtheorem{definition}[theorem]{Definition}
\newtheorem{remark}{Remark}[section]
\newtheorem{lemma}[theorem]{Lemma}
\newtheorem{assumptions}{Assumptions}[section]
\newcommand{\ca}[1]{\mathcal{#1}}
\newcommand{\bb}[1]{\mathbb{#1}}
\newcommand{\set}[1]{\left\{#1\right\}}
\newcommand{\parent}[1]{\left(#1\right)}
\newcommand{\Rd}{\bb{R}^d}
\newcommand{\R}{\bb{R}}
\newcommand{\norm}[1]{\left\lVert#1\right\rVert}
\newcommand{\prom}[1]{\langle #1 \rangle}
\chardef\bslash=`\\ 
\numberwithin{equation}{section}
\newcommand{\N}{\mathbb{N}}
\def\bm{\left( \begin{array}{cc}}
\def\endm{\end{array}\right)}
 \newcommand{\Dv}{\operatorname{div}}
 \providecommand{\norm}[1]{\lVert#1 \rVert}
\newcommand{\be}{\begin{equation}}
\newcommand{\ee}{\end{equation}}
\newcommand{\ba}{\left(\begin{array}{c}}
\newcommand{\ea}{\end{array}\right)}
\newcommand{\bea}{\begin{eqnarray}}
\newcommand{\eea}{\end{eqnarray}}
\newcommand{\bee}{\begin{eqnarray*}}
\newcommand{\eee}{\end{eqnarray*}}
\newcommand{\ben}{\begin{enumerate}}
\newcommand{\een}{\end{enumerate}}
\title{The Calder\'on's problem via DeepONets}
\author{Javier Castro$^{a}$\and Claudio Mu\~noz$^{b,c}$\and Nicol\'as Valenzuela$^{b}$}
\date{\today}
\begin{document}

\maketitle

\begin{abstract}
We consider the Dirichlet-to-Neumann operator and the {\color{black} direct and inverse} Calder\'on{\color{black} 's} mappings appearing in the Inverse Problem of recovering a smooth bounded and positive isotropic conductivity of a material filling a smooth bounded domain in space. Using deep learning techniques, we prove that these mappings are rigorously approximated by DeepONets, infinite-dimensional counterparts of standard artificial neural networks
\end{abstract}

\begin{center}
{\it Dedicado a Carlos Kenig, con cari\~no y admiraci\'on}
\end{center}

{\footnotesize
\noindent
\thanks{$^{a}$: } Bielefeld University, Germany. Email address: {\tt jcastrom@math.uni-bielefeld.de}. \\
Funded by Chilean grant Fondecyt 1191412. \\
\thanks{$^{b}$: } Departamento de Ingeniener\'ia Matem\'atica DIM, Universidad de Chile. Email addresses: {\tt cmunoz@dim.uchile.cl, nvalenzuela@dim.uchile.cl}. 
Funded by Chilean grants Fondecyt 1191412, 1231250 and Basal CMM FB210005.\\
\thanks{$^{c}$: } Centro de Modelamiento Matem\'atico CMM, Universidad de Chile.
}

\tableofcontents 

\section{Introduction}

\subsection{Setting: the Calder\'on's problem} In this paper we deal with the Calder\'on's problem. This is the question posed by Calderón in \cite{C,Cal06}, on whether is it possible to determine the electrical conductivity in a medium by only perfoming measurements of the voltage on the boundary.  The background mathematical model is based on the following Dirichlet boundary value problem
\begin{equation}\label{eq:direct}
\begin{cases}
\text{div}(a(x)\nabla u)=0
&
\text{in }\Omega,
\\
u= f
&
\text{on }\partial\Omega,
\end{cases}
\end{equation}
being $\Omega\subset\R^d$, $d\geq 2$ a smooth bounded domain, $u:\overline{\Omega}\to\R$, $f:\partial\Omega\to\R$, and $a:\overline\Omega\to\R$, in the isotropic case, while $a:\overline\Omega\to\mathcal M_{d\times d}(\R)$, with $a$ symmetric in the anisotropic case. The coefficient $a$ plays the role of the {\it electrical conductivity}. For the purposes of this paper, we will only consider the isotropic case. It will be assumed that $a(x)$ is bounded away from zero, in the sense that $a(x)  \geq a_0>0$ a.e. in $\Omega$. 

\medskip

Let $H^{{\color{black} 1/2}}(\partial\Omega)$ denote the classical Hilbert space placed on the boundary, and $H^{-{\color{black}1/2}}(\partial\Omega)$ its dual. It is well-known that $H^{{\color{black} 1/2}}(\partial\Omega)$ is the image under the trace operator of the Sobolev space $H^1(\Omega)$. It is also known by the standard elliptic theory that, given $f\in H^{{\color{black} 1/2}}(\partial\Omega)$, and $a\in L^{\infty}(\Omega)$ there exists a unique weak solution $u\in H^1(\Omega)$ to \eqref{eq:direct}, that is
\[
\int_\Omega a \nabla u\cdot\nabla v\,dx=0,
\] 
for any $v\in H^{-1}(\Omega)$, and $u\vert_{\partial\Omega}\equiv f$, in the trace sense. As a consequence, given $a\in L^{\infty}(\Omega)$, one can define the {\it Dirichlet-to-Neumann operator}
\begin{equation}\label{eq:DN}
\Lambda_{a}:H^{{\color{black} 1/2}}(\partial\Omega)\to H^{{\color{black} -1/2}}(\partial\Omega),
\qquad
\Lambda_a f= a\left.\frac{\partial u}{\partial\nu}\right\vert_{\partial\Omega},
\end{equation}
where $u$ is the unique solution to \eqref{eq:direct} and $\nu=\nu(x)$ is the outward normal at the point $x\in\partial\Omega$. Of course, \eqref{eq:DN} makes no sense pointwise, but later we will give a precise rigorous definition making possible its use in applications. In this framework, the above question posed by Calderón can be rephrased in terms of the one-to-one property of the {\bf Calder\'on's mapping} $a\mapsto\Lambda_a$ in \eqref{eq:DN}, namely whether 
\begin{equation}\label{eq:question}
\Lambda_{a_1}=\Lambda_{a_2} \qquad \implies \qquad a_1=a_2,
\end{equation}
with $a_1,a_2\in L^{\infty}(\Omega)$.

\subsection{Brief review of results on the Calder\'on's problem} 

The answer to the original Calder\'on's problem is nowadays quite complete or almost satisfactory, after continuous improvements along the last 30 years. First, Kohn and Vogelius in \cite{KV} proved that for smooth conductivities, one can recover their values and those of their derivatives on the boundary. Later on, Alessandrini improved this result by proving  in \cite{A} that the Lipschitz conductivity at Lipschitz boundaries can be recovered by their Dirichlet-to-Neumann maps. 

\medskip

As for the recovering in the interior of the domain, the breakthrough has been produced by Sylvester and Uhlmann in the well celebrated works \cite{SU86,SU87}, where they first gave a positive answer to \eqref{eq:question}, proving that $C^2$-conductivities can be recovered inside the domain by boundary measurements, provided $d\geq3$. In this paper, they also introduced a quite general strategy based on the construction of suitable test functions, which are usually called CGO-solutions, introduced by Calderón himself in \cite{C}. Some years later, Brown \cite{Br96} improved the result by Sylvester and Uhlmann, again in dimension $d\geq3$, gaining half derivative on the regularity of the conductivities, requiring to be in Sobolev spaces $W^{\frac32,p}(\Omega)$, for suitable $p$. The important applied problem of reconstructing the conductivity from the provided data was fully addressed by Nachman \cite{Nac88}. In 2006, Astala and P\"aivarinta gave in \cite{AP} a complete positive answer to \eqref{eq:question} in the plane, proving that bounded conductivities can be recovered. The peculiar features of the plane, and the connection with the analogous problem of unique continuation led Uhlmann to conjecture that a threshold for the validity of \eqref{eq:question} in dimension $d\geq3$ should be represented by Lipschitz conductivities ($W^{1,\infty}$). Recently, a fundamental result in this direction has been proved by Haberman and Tataru in \cite{HT}, where small Lipschitz conductivities have been handled. See also the improvements for non{\color{black}-}Lipschitz conductivities in \cite{Hab15} for dimensions $3\leq d\leq 6$. Finally, Caro and Rogers removed the smallness condition in \cite{CR}.

\medskip

A related and more general question than \eqref{eq:question} is related to the possibility to recover the conductivity in the interior by boundary measurements performed {\bf only on a portion of the boundary}. In this case, the situation is quite more complicated, but similar results to the above are now available. A first important paper is due to Bukhgeim and Uhlmann, where a measurement on a bit more of the half of the boundary is required. Here, the authors make a strong use of Carleman's inequalities in order to construct suitable CGO-solutions. Later, in the well celebrated paper \cite{KSU}, {\bf Kenig, Sj\"ostrand and Uhlmann} introduced the tool of limiting Carleman weights and gave a strong improvement to the results in \cite{BU}, showing that measurements on even very small sets on the boundary permit to recover the conductivity. Later on, several improvements, especially related to the regularity of the conductivity have been produced, see e.g. \cite{UY} and the references therein.

\medskip

Consequently, one can say then that the Calder\'on's problem is well-understood in the isotropic setting, at least for sufficiently smooth conductivities. The very weak stability of the problem is also well-understood since the work by Alessandrini \cite{A}. The anisotropic case is much less understood, and the reader can consult the references \cite{Gu_survey} for further details. For the corresponding Calder\'on's problem in parabolic models, see \cite{Isakov}. 

\subsection{Main goal}

In this paper, we propose a different but not less interesting approach to face the Calder\'on's problem. This point of view goes as follows: in recent years, deep learning techniques have been intensively used to determine solutions to many applications, either numerically or theoretically speaking. The purpose of this paper is to propose and prove a rigorous mathematical justification for the use of deep neural networks in the simplest class of {\color{black} direct and inverse} Calder\'on's  mappings. Our approach is based on the use of the recently introduced DeepONets.  

\subsection*{Organization of this paper} This paper is organized as follows. In Section \ref{Sect:0} we review recent techniques in mathematical DNNs applied to PDEs. Section \ref{Sect:1} is devoted the main tools used in this paper, DeepONets. Next, Section \ref{Sect:1b} states the main results proved in this work. In Section \ref{Sect:2} we briefly review standard results about the Calder\'on's problem that will be needed along the paper. Section \ref{Sect:3} is devoted to additional and adequate characterizations of the Dirichlet-to-Neumann operator in the context of DNNs. In Section \ref{Sect:4} we fully introduce the required DeepOnets and describe its main properties. In Section \ref{Sect:5} we prove the main results of this paper. Finally, Section \ref{Sect:6} discusses several issues related to numerical implementation of the main results, among other important topics. 

\subsection*{Acknowledgments} We are deeply indebted of professors Miguel A. Alejo and Luca Fanelli, for their stimulating help with the inverse problems bibliography, data collected, and comments on a first version of this paper. {\color{black} Several comments by Gunther Uhlmann considerably helped to improve the point of view from inverse problems. Last but not least, we thank the referees for their comments and suggestions that finally led to a much better version of this manuscript.}

\section{Deep Learning techniques in PDEs}\label{Sect:0}

\subsection{Brief overview}

Deep Learning (DL) computational methods are being considered as a powerful numerical approach to solve linear and nonlinear PDE problems. Among these PDE problems, the approximation of PDE solutions is highlighted as one the most relevant ones \cite{ATY+19}, because its high relevance and impact in different applied and theoretical fields. Precisely, this approximation of an infinite dimensional object (the solution itself) is constructed by a new finite dimensional object that ``learns'' how to adapt itself to better fit the actual solution in view of the constraints prescribed by reality. The most important candidates to deep objects are precisely deep neural networks. 

\medskip

An artificial neural network (NN) is, roughly speaking, a generalized polynomial with a certain number of hidden layers and adjustable parameters that, depending on these degree freedoms, approximates a desired function (see \eqref{theta} for a precise, mathematical definition). The approximation context can be then placed as an improved version of the famous Stone-Weierstrass theorem, and precisely from this setting the finite dimension theory of neural networks is well-known from decades ago, see e.g. \cite{Hor91,LLPS93,Ros58,Yar16}. These results state that NNs are \emph{universal} approximators of continuous and measurable functions, or in mathematical terms, their set is dense in the continuous functions vector space. In theory, NNs with multiple hidden layers, known as \emph{deep} neural networks (i.e. more unknown parameters) are theoretically better suited to approximate a given function, but this is not the standard case in numerical applications. In reality, Stochastic Gradient Descent (SGD) type algorithms are used to find an appropriate setting for the unknown parameters of the NNs, precisely by using sets of data where one has good information about their possible outcome. Whether or not these algorithms give the exact right choice of approximative NN is still under debate, but important advances have been made during the past decade in this direction, see e.g. \cite{EHJ17}.

\medskip

Coming back to a PDE problem, when the dimension of the approximation spaces is high $d\gg1$, for the current numerical methods (finite differences, finite elements, among others) the complexity of the problem grows exponentially. This problem is currently known as the \emph{curse of dimensionality}. Many PDE problems in practice are posed in higher dimensions, such as those in finance and many problems describing the dynamics and behavior of multiple species or agents. It turns out that recent developments in DL may be highly useful to attack this problem. Indeed, the foundational articles \cite{HJE18,EHJ17,HPW19,RK18}, among others, propose to solve this issue by means of Deep Learning (DL) techniques. 




\medskip

Rigorous DL techniques in PDEs are classified in many categories, each one interesting by itself. We mention here some of the most related to this work: Monte Carlo, Multi-level Picard, Deep Galerkin, PINNs methods, among others. The domain grows fast and continuously, and current techniques may become outdated soon. Foundational results can be found in the works \cite{LLP,LLF}. Monte Carlo algorithms are an important and widely used approach to the resolution of the high dimension problem. This can be done by means of the classical Feynman-Kac representation that allows us to write the solution of an elliptic-parabolic linear PDE as an expected value, and then approximate the high dimensional integrals with an average over simulations of random variables. See \cite{Val22,Val23} and references therein for further results in this direction.  It is important to notice that the techniques presented in this paper may be useful to consider models where the Feynman-Kac representation previously mentioned is not available, such as some dispersive models. On the other hand, Multilevel Picard method (MLP) is another approach and consists on interpreting the stochastic representation of the solution as a modified fixed point equation, to then replace the obtained terms by suitable DNN approximations. This technique has showed quite powerful to consider many models of high interest. See \cite{BHH+20, HJK+20,EHJ+21,HJKN20} for fundamental advances in this direction. As another option, the so-called Deep Galerkin method (DGM) is another DL approach used to solve quasilinear parabolic PDEs plus boundary and initial conditions.  See \cite{SS18} for the development of the DGM and \cite{MNdH20} for an application.  In \cite{EHJ17}, E, Han and Jentzen proposed an algorithm for solving parabolic PDE by reformulating the problem as a stochastic control problem. This connection also come from the Feynman-Kac representation, proving once more that stochastic representations are a key tool in the area. More recent developments in this area can be found in Han-Jentzen-E \cite{HJE18} and Beck-E-Jentzen \cite{BEJ19}. Another important method has been devised by H\^ure, Pham and Warin in \cite{HPW19}, where they construct and prove consistency of forward-backward stochastic PDEs schemes using deep learning techniques, and applied their results to a huge family of parabolic PDEs. This approach is very robust and it was then extended by one of us \cite{Cas21} to the case of processes with jump discontinuities, representing nonlocal models. The Physics Informed Neural Networks (PINN) technique has become one of the most relevant methods to approximate solutions to physical models using neural networks. Originally proposed by Kardianakis and Raissi \cite{RK18}, the PINN technique uses the fact that some physical systems do have additional constraints that will help to fully recover the solution up to an error by quite fast DNN methods.  See \cite{RK18,RPK19,PLK19,MJK20} and references therein for a detailed introduction to the subject. This method has been highly influential, and in \cite{PINNsZurich} the authors have been able to provide rigorous justifications for the PINNs method. Indeed, they proposed an algorithm based on physics-informed neural networks (PINNs) to efficiently approximate solutions of nonlinear dispersive PDEs such as the KdV-Kawahara, Camassa-Holm and Benjamin-Ono equations. The stability of solutions of these dispersive PDEs is leveraged to prove rigorous bounds on the resulting error. 

\subsection{Deep Learning techniques in Inverse Problems}
 
It is well-known that the Calder\'on's problem is part of the huge family of inverse problems appearing in sciences and engineering. An inverse problem is, roughly speaking, any situation where one has recollected data (outputs) obtained from known inputs and wants to find an inner, unknown property or component of the main system. Since DNNs allow one to estimate functions from recollected data, they are naturally linked with inverse problems either from the numerical, practical and mathematical point of views. 

\medskip

There are many important results in the literature concerning DNN and Inverse problems. They range from applications, numerical, and theoretical results. We will review some of them, the interested reader can consult the references within these publications for a deeper insight. See also \cite{OJM22} for a complete introduction to the problem in terms of deep learning. Many results are very recent and some of them are still under review. Let us first mention the result in \cite{GCC22}, a transformer-based deep direct sampling method is proposed for a class of boundary value inverse problems. In ref. \cite{VAK+22}, the authors formulate a class of physics-driven deep latent variable models (PDDLVM) to learn parameter-to-solution (forward) and solution-to-parameter (inverse) maps of parametric partial differential equations (PDEs). 

\medskip

The work in \cite{FY20} is related to our results. The authors propose compact neural network architectures for the forward and inverse maps for both 2D and 3D problems. Numerical results demonstrate the efficiency of the proposed neural networks as well. In \cite{Li20}, the authors established a complete convergence analysis for the proposed NETT (network Tikhonov) approach to inverse problems. 

\medskip

From a more mathematical point of view, there are several important recent results for Inverse Problems in terms of NNs. In \cite{PKL22}, the authors established sharp
characterizations of injectivity of fully-connected and convolutional ReLU layers and networks. Additionally, the authors in \cite{dHLW21} developed a theoretical analysis for special neural network architectures, for approximating nonlinear functions whose inputs are linear operators. Such functions commonly arise in solution algorithms for inverse boundary value problems. See also \cite{AVLR21} for more results in this direction.

\medskip

There is a natural comparison between previous results and ours. Here we perform a more functional-analysis approach to the problem, seeking for the full approach or resolution to the Calder\'on's mapping instead of resolving for/from some particular inputs. This method will have advantages since we overcome the classical nearly-unstable character of Inverse Problems. On the other hand, our approach also has disadvantages: is less explicit in nature, highly theoretical sometimes; and we do not provide a specific algorithm to find the DeepONet. However, we have been able to provide a suitable form to compute it in some particular cases.

\section{First introduction to DeepONets}\label{Sect:1}

In all the previous results the use of well-defined finite dimensional deep neural networks is the main argument to solve the problem. However, many other problems require the approximation of functions between infinite dimensional spaces, such as a solution mapping, or any other complex higher dimensional object. For instance, the full solution map to a PDE. In the following subsection we introduce DeepONets, a generalization to DNNs.

\subsection{General continuous Banach case}

Some words first about the recent literature. Generalizing neural networks to an infinite dimensional framework has been recently of great interest. Earlier but fundamental results go back to Sandberg \cite{San91}, who defined a set of infinite dimensional mappings parameterized by finite dimensional parameters, providing a first universal approximation theorem. Probably one of the main results in this setting is the one given by Chen and Chen \cite{CC95}, who dealt with the approximation of functionals defined on a compact subset of $C(K)$, the real-valued continuous functions defined on $K$, with $K$ a compact of $\Rd$. They also considered more general mappings defined on $C(K)$ with values into $C(K)$. Among their results, a key result is Lemma $7$ in \cite{CC95}, one of the important properties stated in this lemma goes as follows:

\begin{lemma}[\cite{CC95}]\label{L3p1}
Let $(\varepsilon_k)_{k\in\N}$ be any collection of strictly positive numbers such that $\varepsilon_k\to 0$. Let $V$ be a compact set of functions in $C(K)$. Then, for every $k$ there exists an operator $T_k\colon C(K)\to C(K)$ such that every function $f\in V$ is $\varepsilon_k$-close to $T_k f$ in uniform norm sense. Moreover, the set
\begin{align*}
	V\cup\left(\bigcup_{k\in\N} T_k V\right)
\end{align*}
is compact in $C(K)$.  
\end{lemma}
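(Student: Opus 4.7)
The plan is to build each $T_k$ as a concrete finite-rank continuous linear operator on $C(K)$ defined from a piecewise-linear interpolation scheme over $K$, and then argue compactness of the total set via sequential compactness and a case analysis depending on whether the index $k$ is bounded or not.

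For the construction, I would exploit that $V \subset C(K)$ is compact, so by Arzel\`a--Ascoli it is equicontinuous and uniformly bounded. Given $\varepsilon_k>0$, equicontinuity of $V$ yields $\delta_k>0$ such that $|f(x)-f(y)|<\varepsilon_k$ whenever $f\in V$ and $|x-y|<\delta_k$. Since $K$ is compact, I cover it by finitely many balls $B(x_i^{(k)},\delta_k/2)$, $i=1,\dots,M_k$, and take a continuous partition of unity $\{\varphi_i^{(k)}\}_{i=1}^{M_k}$ subordinate to this cover. I then define
\[
T_k f(x):= \sum_{i=1}^{M_k} f(x_i^{(k)})\,\varphi_i^{(k)}(x), \qquad f\in C(K).
\]
This is linear, continuous and finite-rank from $C(K)$ to $C(K)$. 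For $f\in V$ and any $x\in K$, only those indices $i$ with $x\in \operatorname{supp}\varphi_i^{(k)}$ contribute, and for such $i$ we have $|x-x_i^{(k)}|<\delta_k$, hence $|f(x_i^{(k)})-f(x)|<\varepsilon_k$. Using $\sum_i \varphi_i^{(k)}(x)=1$, this immediately gives $\|T_kf-f\|_\infty\le \varepsilon_k$ for every $f\in V$, which is the first conclusion.

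For the compactness of $W:=V\cup\bigl(\bigcup_{k\in\N} T_k V\bigr)$, I would argue sequentially. Take $(g_n)_n\subset W$ and split into cases. If infinitely many $g_n$ already lie in $V$, compactness of $V$ provides a subsequence converging to some element of $V\subset W$. Otherwise write $g_n=T_{k_n}f_n$ with $f_n\in V$. If the integer sequence $(k_n)$ is bounded, then infinitely many $g_n$ lie in a common $T_k V$; since $T_k$ is continuous on $C(K)$, the image $T_k V$ is compact, and we extract a convergent subsequence with limit in $T_k V\subset W$. If instead $k_n\to\infty$, use compactness of $V$ to extract a subsequence with $f_n\to f\in V$, and then
\[
\|g_n-f\|_\infty \le \|T_{k_n}f_n-f_n\|_\infty + \|f_n-f\|_\infty \le \varepsilon_{k_n} + \|f_n-f\|_\infty \to 0,
\]
so $g_n\to f\in V\subset W$. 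In all cases, the limit lies in $W$.

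The main obstacle, and the reason the statement is a little subtle, is precisely the last case $k_n\to\infty$: one cannot hope to stay inside any single $T_k V$, so closedness of $W$ has to come from the approximation property $\|T_kf-f\|_\infty\le \varepsilon_k$ combined with the fact that $V$ itself is compact, which provides a candidate limit inside $V$. The partition-of-unity construction of $T_k$ is what makes this quantitatively clean: it yields simultaneously continuity of each $T_k$ (needed to handle the bounded-$k_n$ case) and a uniform rate $\varepsilon_k\to 0$ valid over all of $V$ (needed to handle the unbounded-$k_n$ case).
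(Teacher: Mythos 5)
Your construction via equicontinuity, a finite $\delta_k/2$-net and a subordinate partition of unity, followed by the three-case sequential-compactness argument, is correct and is exactly the method the paper attributes to Chen--Chen (the paper only sketches it, noting it ``strongly relies on equicontinuity of compact sets in $C(K)$ and the construction of a partition of unity for every $k$''). The only cosmetic imprecision is that ``$(k_n)$ bounded'' and ``$k_n\to\infty$'' are not complementary cases; in the unbounded case you should first pass to a subsequence along which $k_n\to\infty$, after which your estimate $\|g_n-f\|_\infty\le\varepsilon_{k_n}+\|f_n-f\|_\infty$ goes through unchanged.
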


{\color{black}L}emma \ref{L3p1} strongly relies on equicontinuity of compact sets in $C(K)$ and the construction of a partition of unity for every $k$. For every $k$, the transformed set $T_k V$ is constituted by simpler functions that can be easily described by finite dimensional neural networks, a property which allowed them to create the required DNN architecture. Chen and Chen also demonstrate that their architectures approximate any continuous mapping with respect to the uniform norm.

\medskip

{\color{black}
More recently Lu, Jin and Karniadakis \cite{LJK19}, strongly inspired by \cite{CC95}, introduced an architecture called {\bf DeepONets}. Said architecture is based upon the design of two sub-networks, namely the branch-net which deals with the input space, and the so called trunk-net for the output space. These sub-networks handle the discretization procedure for such functional spaces. Although the architecture presented here is built for Hilbert spaces, we decided to still call them DeepONets since we essentially follow the same philosophy as that of \cite{LJK19} and even more that of \cite{CC95}.

\medskip

In this section we shall introduce neural networks as they are commonly known. We then provide basic definitions concerning their infinite dimensional generalization. We use the same notation as in \cite{Cas22} in the infinite dimensional context. For further results, important to understand the ideas contained in this paper, the reader can consult the references \cite{LMK21, CC95}. 
}

\subsection{Finite dimensional deep neural networks}

The following definition introduces the notion of neural networks in finite-dimensional spaces. All previous results and approaches share that they use well-defined \emph{finite dimensional} DNNs as the main argument to solve the problem. However, many other problems need the approximation of functions between \emph{infinite dimensional} spaces, such a solution mapping, or any other complex object. Explicitly, the notion of neural networks in finite-dimensional spaces is as follows:  denote by $C(\mathbb R,\mathbb R)$ the set of real-valued continuous functions.

\begin{definition}
    We define
    \be\label{theta}
    \begin{aligned}
      \ca{N}_{\sigma,L,d,m}:= &~{}
     \Big\{ \theta=(A_1,A_2,\ldots,A_L) ~ : ~ \hbox{$A_j$ are linear affine functions}, \\
    &~{} \quad  \hbox{$A_j: \R^{d_j} \to \R^{d_{j+1}}$, $j=1,\ldots, L-1$, and $d_1=d$, $d_L=m$}  \Big\}, \\
    \ca{N}_{\sigma,d,m} :=&~{} \bigcup_{L\in \N} \ca{N}_{\sigma,L,d,m},
     \end{aligned}
     \ee
     as the set of neural network parameters with $L$ fixed or arbitrary number of hidden layers, respectively. 
\end{definition}
Above, as already mentioned, $L$ will represent the {\bf fixed number of hidden layers}. The dimension $d$ represents input size, the output dimension is $m$, and the {\bf activation function} $\sigma\colon\R\to\R$ must be ensured to satisfy the classical universality results for finite-dimensional DNN (i.e, $\sigma$ is not a polynomial). With a slight abuse of notation, we call $\theta$ as \emph{the} artificial neural network, different from its realization as a continuous function, which is introduced now:  

\begin{definition}
Given $\theta=(A_1,A_2,\ldots,A_L)\in\mathcal N_{\sigma,L,d,m}$, the corresponding realization of the neural network $\theta$ is the function $f^\theta: \R^d \to \R^m$ given by 
\[
f^{\theta}(x):=A_L \sigma A_{L-1} \sigma  \cdots \sigma A_1 x,
\]
and where $A_1, \ldots, A_L$ are suitable linear affine functions $A_j: \R^{d_j} \to \R^{d_{j+1}}$, $j=0,\ldots, L-1$, and $d_1=d$, $d_L=m$.
\end{definition}

\subsection{Infinite dimensional deep neural networks}

As it was indicated above, many PDE problems need a generalization of NNs in an infinite dimensional framework. In the cases studied in \cite{Cas22}, a general DL setting was needed, essentially to consider the approximation of nonlinear mappings $G: H\rightarrow W,$ where $H,W$ are separable Hilbert Spaces. With this result, the infinite dimensional Kolmogorov model was considered. Under this setting, the approximation of $G$ by a function $F^\theta: H\rightarrow W$ depending only on $\theta$ with $\theta\in \R^\kappa,~~\kappa\in \N$, is the main characteristic of the so call{\color{black}ed} \emph{DeepONets}. 

\medskip

Before introducing DeepONets, we need some additional ingredients. These are the {\bf projection and extension mappings}, the main idea behind these auxiliary functions is to lower the level of complexity into finite dimensional spaces. 



\medskip 

\noindent
{\bf Setting}. We would like to approximate operators mapping some Hilbert space $(H,\prom{\cdot,\cdot}_H,\norm{
\cdot}_H)$ to another $(W,\prom{\cdot,\cdot}_W,\norm{
\cdot}_W)$. Let $(e_n)_{n\in\bb{N}}$ and $(g_n)_{n\in\bb{N}}$ be orthonormal basis in $H$ and $W$ respectively.

\medskip

Consider the maps (recall that $d$ does not represent the dimension of the Calder\'on's elliptic PDE, and $\hbox{dim} W \geq m$, {\color{black} but instead the input size of the finite dimensional DNN included in the DeepONet})

\begin{minipage}{.45\linewidth}
	\begin{align*}
		P_{H,d}: H&\longrightarrow \Rd\\
		x &\longmapsto  \bigg(\prom{x,e_i}_H\bigg)_{i=1}^d,
	\end{align*}
\end{minipage}%
\begin{minipage}{.45\linewidth}
	\begin{align}\label{EE}
		E_{W,m}: \R^m&\longrightarrow W\nonumber \\
		a &\longmapsto \sum_{i=1}^m a_i g_i.
	\end{align}
\end{minipage}

\medskip	
	
Notice that $P_{H,d}$ is the standard projection operator which depends on the chosen orthonormal basis $(e_n)_{n\in\N}$. Additionally, $E_{W,m}$ extends finite dimensional vectors to the (possibly infinite dimensional space) $W$ by using the first $m$ members of the orthonormal basis $(g_n)_{n\in\N}$. 
	
	Now we are ready to fully define DeepONets.
	
	\begin{definition}\label{def:deeponets}
	Let $(H,d,\theta,m,W)$ be a fixed set of parameters; with $\theta\in\ca{N}_{\sigma,L,d,m}$ defined in \eqref{theta}, and $f^\theta:\R^d \to \R^m$ be its realization as finite dimensional deep neural network of parameter $\theta$. We define the DeepOnet $F^{H,d,\theta,m,W}:H\to W$ by
	\begin{align}\label{eq:DO-def}
		F^{H,d,\theta,m,W} = E_{W,m}\circ f^{\theta}\circ P_{H,d}.
	\end{align}
	See Fig. \ref{DON}.
\end{definition}
Notice that although $F^{H,d,\theta,m,W}$ is non-linear, it has finite range. Additionally, it is also continuous as a mapping from $H$ to $W$.

\begin{figure}[h!]
\centering
\begin{tikzcd}
H \arrow{r}{G} \arrow{d}[swap]{P_{H,d}} & W  \\   
\R^d \arrow[swap]{r}{f^{\theta}} & \R^m \arrow{u}[swap]{E_{W,m}}
\end{tikzcd}
\caption{Schematic description of how the DeepONet \eqref{eq:DO-def} is approximating a general mapping $G:H\to W$.}\label{DON}
\end{figure}

\medskip

	Unless is extremely necessary, we omit $H,W$ and write $F^{d,\theta,m}$ in Definition \ref{def:deeponets} to avoid a parameter overload. Set
	\begin{align*}
		\ca{N}^{H\to W}_{\sigma} &= \bigcup_{d,m\in\bb{N}} \set{d}\times\ca{N}_{\sigma,d,m}\times\set{m},\\
		\ca{N}^{H\to W}_{\sigma,L} &= \bigcup_{d,m\in\bb{N}}\set{d}\times\ca{N}_{\sigma,L,d,m}\times\set{m}.
	\end{align*}
	Observe that $\ca{N}^{H\to W}_{\sigma,L}\subseteq\ca{N}^{H\to W}_{\sigma}$ (the less parameters specified, the bigger the set). Let $\ca{N}=\ca{N}^{H\to W}_{\sigma}$ or $\ca{N}=\ca{N}^{H\to W}_{\sigma,L}$. It is straightforward to define
	\begin{align*}
		\ca{R}(\ca{N}) = \set{F^{H,d,\theta,m,W}\ \Big|\ (d,\theta,m)\in\ca{N}},
	\end{align*} 
as the set of continuous realizations of DeepONets mappings from $H$ to $W$. 

{\color{black}
\begin{remark}[On quantitative bounds for the projection operator $P_{H,d}$]
Motivated by the quest of finding specific and explicit bounds on the error obtained by DeepONets, we first recall the following uniform result proved by Daniilidis: 
\begin{lemma}[\cite{Cas22}]\label{lemma:aris}
Let $K$ be a compact set in $H$, and  $P_{H,d}u=\sum_{n\in\N} \langle u, e_n\rangle_H e_n$. Then for any $\varepsilon>0$ there exists $d\in \N$ such that for any $u\in K$, $\|P_{H,d} u - u\|_{H} <\varepsilon.$
\end{lemma}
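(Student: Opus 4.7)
The plan is to use the classical argument that pointwise convergence of a uniformly bounded family of operators upgrades to uniform convergence on compact sets. First I would clarify the abuse of notation: reading $P_{H,d}u$ as the truncated partial sum $\sum_{n=1}^{d}\langle u,e_n\rangle_H e_n \in H$ (equivalently, the composition of the projection $P_{H,d}:H\to\R^d$ of the excerpt with the natural embedding $\R^d\hookrightarrow H$ via the basis $(e_n)_{n=1}^d$), the operator $P_{H,d}$ is the orthogonal projection onto $\spawn(e_1,\ldots,e_d)$. In particular $\|P_{H,d}\|_{H\to H}\le 1$, so $\|I-P_{H,d}\|_{H\to H}\le 2$.

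Next, I would record the pointwise convergence: for a fixed $u\in H$, Parseval's identity gives
\[
\|u-P_{H,d}u\|_H^2 \;=\; \sum_{n>d}|\langle u,e_n\rangle_H|^2 \;\xrightarrow[d\to\infty]{} \;0,
\]
since $(e_n)_{n\in\N}$ is a complete orthonormal system of $H$. This is the only place where completeness of the basis enters.

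The heart of the argument is a standard totally-bounded/equicontinuity trick. Fix $\varepsilon>0$. By compactness of $K\subset H$, there exist finitely many points $u_1,\ldots,u_N\in K$ such that the balls $B(u_i,\varepsilon/3)$ cover $K$. By the previous step applied to each $u_i$, I can pick $d_i\in\N$ such that $\|u_i-P_{H,d}u_i\|_H<\varepsilon/3$ for every $d\ge d_i$. Setting $D:=\max_{1\le i\le N}d_i$, for any $d\ge D$ and any $u\in K$ I choose $u_i$ with $\|u-u_i\|_H<\varepsilon/3$ and estimate
\[
\|u-P_{H,d}u\|_H \le \|u-u_i\|_H + \|u_i-P_{H,d}u_i\|_H + \|P_{H,d}(u_i-u)\|_H \le \tfrac{\varepsilon}{3}+\tfrac{\varepsilon}{3}+\tfrac{\varepsilon}{3}=\varepsilon,
\]
where the last term is bounded using $\|P_{H,d}\|_{H\to H}\le 1$. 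This yields the claimed uniform bound and finishes the proof.

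There is no real obstacle here since the result is essentially a special case of the principle that a uniformly bounded, pointwise convergent sequence of operators converges uniformly on compacts. The only minor point that deserves attention is the notational one above: the lemma as stated conflates $P_{H,d}$ (which in the excerpt lands in $\R^d$) with its reinterpretation as the truncated-series projection inside $H$. Once that identification is made explicit, the rest is a one-page $\varepsilon$-net argument with no subtleties.
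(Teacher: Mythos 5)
Your proof is correct. The paper itself does not prove this lemma; it is cited directly from \cite{Cas22} and attributed to Daniilidis, so there is no in-paper argument to compare against. Your $\varepsilon$-net argument — covering $K$ by finitely many balls of radius $\varepsilon/3$, invoking Parseval for the pointwise decay $\|u - P_{H,d}u\|_H^2 = \sum_{n>d}|\langle u,e_n\rangle_H|^2 \to 0$ at each center, and then transferring the estimate to arbitrary points of $K$ via $\|P_{H,d}\|_{H\to H}\le 1$ — is the standard and essentially the only reasonable way to prove this; it is the familiar fact that a uniformly bounded family of operators converging strongly to the identity does so uniformly on compacts. You are also right to flag the notational slip: the sum in the statement should run over $n=1,\dots,d$ (a finite truncation), and $P_{H,d}$ must be read as the orthogonal projection onto $\operatorname{span}(e_1,\dots,e_d)\subset H$ rather than the $\R^d$-valued map defined earlier in the section (your $E_{H,d}\circ P_{H,d}$ identification). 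One trivial quibble: for an orthogonal projection $\|I-P_{H,d}\|_{H\to H}\le 1$, not merely $\le 2$, though this sharper bound is not needed for the argument you give.
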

Notice the uniformity of the bound for elements in the compact set $K$. This is essential in many aspects, and probably necessary if one wants to obtain specific bounds. In our case, 	$H=W_{\mu}=L^2(H^{1/2}(\partial \Omega) \times H^{1/2}(\partial \Omega),\mu \otimes \mu,\R).$ 
\end{remark}
}

	


\begin{remark}

In this paper we will work with Sobolev spaces which are separable Hilbert spaces. Consequently, from the previous result we will consider probability measures with finite second moment. 

\medskip

Recall the setting already introduced in \eqref{eq:DN}. In view of the Dirichlet-to-Neumann operator, and more generally, the Calder\'on's mapping, there are two (highly different) Hilbert settings in this paper: first, the space $L^2(\Omega)$, where the conductivity $a$ will be placed later on, and second, the classical space $H^{1/2}(\partial\Omega)$ and its dual space $H^{-1/2}(\partial\Omega)$, suitable Hilbert spaces for the Dirichlet-to-Neumann operator. More precisely, in the case of the space $H^{1/2}(\partial\Omega)$, we will consider $\mu$ be a finite measure with finite second moment satisfying 
    \[
    \int_{H^{1/2}(\partial\Omega)} \left(1+ \|f\|^2_{H^{1/2}(\partial\Omega)} \right)d\mu(f)<+\infty.
    \]
\end{remark}

\section{Main Results}\label{Sect:1b}

Recall the main ingredients in this paper. Let $\Omega$ be a bounded, smooth domain in $\mathbb R^d$, $d\geq 2$. In its simplest form, the standard and well-known Calder\'on's problem is the determination of the conductivity $a=a(x)\in L^\infty(\Omega) $ satisfying $a(x)\geq a_0>0$ in the elliptic problem
\be\label{Calderon}
\Dv_x [ a(x)\, \nabla u(x)] =0, \quad x\in \Omega, \qquad u \big|_{\partial\Omega} =f,
\ee
under the knowledge of the \emph{Dirichlet-to-Neumann map} (DN)
\be\label{DNmap}
\Lambda_a : H^{1/2}(\partial\Omega) \ni f  \longmapsto a ~\nabla u \cdot \nu \big|_{\partial\Omega} \in H^{-1/2}(\partial\Omega), \qquad \hbox{$\nu =$ unit outer normal to $\Omega$}.
\ee

\medskip

First of all, we introduce the background space for the conductivities $a$:
\begin{definition}[Admissible conductivities]\label{def:X-def}
   We define the set $X(\Omega)$ as follows,
    \begin{align}\label{XOmega}
        X(\Omega) = \set{a \in L^{\infty}(\Omega) ~ \colon ~ \exists ~a_0>0 \quad \text{s.t.} \quad a \geq a_0~\hbox{a.e.}}.
    \end{align}
\end{definition}
Notice that $X(\Omega)$ is continuously embedded into the Hilbert space $L^2(\Omega)$, thanks to the boundedness of $\Omega$. No other assumption on the conductivity $a(x)$ is needed.

\medskip

Having defined $X(\Omega)$, we introduce the Calder\'on's mapping:
\be\label{Lambda}
\Lambda : X(\Omega) \longrightarrow \mathcal L(H^{1/2}(\partial\Omega) , H^{-1/2}(\partial\Omega) ), \qquad \Lambda (a):= \Lambda_a,
\ee
where $\mathcal L(H^{1/2}(\partial\Omega) , H^{-1/2}(\partial\Omega) )$ is the vector space of continuous mappings from $H^{1/2}(\partial\Omega)$ towards $H^{-1/2}(\partial\Omega)$ (the fact that $\Lambda_a$ belongs to this space will be properly stated below). {\color{black}We often denote the norm of $H^{\pm1/2}(\partial\Omega)$ as $\norm{\cdot}_{\pm 1/2}$ and the norm of bounded linear operators as $\norm{\cdot}_{op}$.}

\subsection{Approximation of $\Lambda_a$, $a$ fixed}

Our first result shows that the   Dirichlet-to-Neumann operator $\Lambda_a$ described in \eqref{DNmap} is well-approximated by DeepONets:
\begin{theorem}\label{MT1}
       Let $a\in X(\Omega)$ and $\Lambda_a$ be as in \eqref{DNmap}. Let $\mu$ be any finite measure with finite second moment in $H^{1/2}(\partial\Omega)$. Then, for all $\varepsilon>0$ there exist a DeepONet $F^{\theta}_\varepsilon\colon H^{1/2}(\partial\Omega)\to H^{-1/2}(\partial\Omega)$ such that
	\begin{equation*}
		\int_{H^{1/2}(\partial \Omega)} \norm{\Lambda_a f - F^{\theta}_\varepsilon f}_{-1/2}^2 \mu(df) < \varepsilon,
	\end{equation*}
\end{theorem}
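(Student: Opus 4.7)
The starting point is that, by standard elliptic theory, $\Lambda_a$ is a bounded \emph{linear} operator from $H:=H^{1/2}(\partial\Omega)$ into $W:=H^{-1/2}(\partial\Omega)$, with $\norm{\Lambda_a}_{op}$ controlled by $\norm{a}_{L^\infty}$ and $a_0$. I follow a three-step Galerkin-type reduction. Fix orthonormal bases $(e_n)_n$ and $(g_n)_n$ of $H$ and $W$, introduce the projection/extension maps $P_{H,d},E_{H,d},P_{W,m},E_{W,m}$ of \eqref{EE}, and define the Galerkin truncations $Q_{H,d}:=E_{H,d}\circ P_{H,d}$, $Q_{W,m}:=E_{W,m}\circ P_{W,m}$. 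The induced finite-dimensional matrix
\[
T_{d,m}\colon\R^d\to\R^m,\qquad T_{d,m}(v):=P_{W,m}\bigl(\Lambda_a(E_{H,d}(v))\bigr),
\]
is a linear (hence continuous) map, and for any DeepONet candidate $F^\theta=E_{W,m}\circ f^\theta\circ P_{H,d}$ I obtain the pointwise decomposition
\[
\Lambda_a f-F^\theta f=\underbrace{(I-Q_{W,m})\Lambda_a f}_{(A)}+\underbrace{Q_{W,m}\Lambda_a(I-Q_{H,d})f}_{(B)}+\underbrace{E_{W,m}(T_{d,m}-f^\theta)P_{H,d}f}_{(C)}.
\]

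\textbf{Reducing $(A)$ and $(B)$ by dominated convergence.} Since $(e_n)_n$ is an orthonormal basis of $H$, one has $Q_{H,d}f\to f$ in $H$, and similarly $Q_{W,m}g\to g$ in $W$ for every fixed $g$. Using $\norm{Q_{W,m}}_{op}\le 1$ together with boundedness of $\Lambda_a$, both $\norm{(A)}_{-1/2}$ and $\norm{(B)}_{-1/2}$ are dominated by $\norm{\Lambda_a}_{op}\norm{f}_{1/2}$, which lies in $L^2(\mu)$ by the finite second moment hypothesis. Dominated convergence then allows me to fix $d$ and $m$ large enough that
\[
\int_{H}\bigl(\norm{(A)}_{-1/2}^2+\norm{(B)}_{-1/2}^2\bigr)d\mu(f)<\tfrac{\varepsilon}{2}.
\]

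\textbf{Approximating the linear map $T_{d,m}$.} With $d,m$ now fixed, $T_{d,m}$ is continuous (in fact linear), so the classical universal approximation theorem --- valid since $\sigma$ is not a polynomial --- yields, for every compact $K\subset\R^d$ and every $\delta>0$, some $\theta\in\mathcal N_{\sigma,d,m}$ whose realization $f^\theta$ satisfies $\sup_{v\in K}|T_{d,m}(v)-f^\theta(v)|<\delta$. By Chebyshev and the finite second moment I pick $R>0$ so large that $\int_{\{\norm{f}_{1/2}>R\}}(1+\norm{f}_{1/2}^2)\,d\mu(f)$ is as small as needed. Since $|P_{H,d}f|\le\norm{f}_{1/2}$, the projected data lie in $K_R:=\{v\in\R^d:|v|\le R\}$; choosing $K=K_R$ with $\delta$ sufficiently small keeps the contribution of $(C)$ on $\{\norm{f}_{1/2}\le R\}$ below $\varepsilon/4$.

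\textbf{Main obstacle: the tail of $(C)$.} The delicate point is $\{\norm{f}_{1/2}>R\}$, where universal approximation provides no control on $|f^\theta(P_{H,d}f)|$. I propose to handle this either by choosing an architecture with controlled growth --- for instance ReLU activations yield a Lipschitz $f^\theta$, and in fact a two-layer ReLU network can represent the linear map $T_{d,m}(v)=Mv$ \emph{exactly} via $Mv=M\sigma(v)-M\sigma(-v)$, eliminating $(C)$ altogether --- or by post-composing $f^\theta$ with a bounded cutoff sub-network so that $|f^\theta(v)|\le C_\theta(1+|v|)$ globally. Combined with $|T_{d,m}(v)|\le\norm{\Lambda_a}_{op}|v|$, the tail contribution to $(C)$ is then bounded by a constant multiple of $\int_{\{\norm{f}_{1/2}>R\}}(1+\norm{f}_{1/2}^2)\,d\mu<\varepsilon/4$. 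Summing the contributions of $(A)$, $(B)$, and the two pieces of $(C)$ yields the claimed $L^2(\mu)$ bound by $\varepsilon$.
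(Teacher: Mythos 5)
Your argument is correct, but it takes a genuinely different route from the paper's primary proof of Theorem~\ref{MT1}. The paper proves Theorem~\ref{MT1-general-case} (and hence Theorem~\ref{MT1}) in three lines: after Lemma~\ref{lemma:Lambda-linear-bounded} one only checks, via \eqref{Lambda-f-bound} and the finite second moment of $\mu$, that $\Lambda_a\in L^2(H^{1/2}(\partial\Omega),\mu;H^{-1/2}(\partial\Omega))$, and then one invokes the abstract DeepONet universal approximation result, Theorem~\ref{theorem:infiniteapprox}. All the Galerkin/Lusin/compactness machinery you reconstruct by hand is packaged inside that theorem. Your decomposition into $(A)$, $(B)$, $(C)$ with the encoder/decoder maps is, however, essentially the decomposition the paper carries out in the subsection ``An alternative method for the proof of Theorem~\ref{MT1-general-case}'' (the terms $I_3$, $I_2$, $I_1$ there), following \cite{LMK21,BHK+21}. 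Three points of comparison are worth recording. First, the paper's alternative method bases $P_{H,d}$, $E_{H,d}$ on the eigenvectors of the covariance operator $Q_\mu$ of $\mu$, which turns the $(B)$-type term into a quantitative bound via $\sum_{k>d}\alpha_k$; you use an arbitrary orthonormal basis and dominated convergence, which is simpler but gives no rate. Second, the paper's alternative method passes through the Riesz isometry $\varphi:H^{-1/2}\to H^{1/2}$ so that $\Lambda_a^\varphi$ is self-adjoint; you work directly with two separate bases in $H^{1/2}$ and $H^{-1/2}$, which is just as valid and avoids introducing $\varphi$. Third, your observation that $T_{d,m}$ is linear and hence representable \emph{exactly} by a two-layer ReLU network (via $Mv=M\sigma(v)-M\sigma(-v)$) is a clean way to kill the term $(C)$ outright, and is stronger than what the paper's alternative method writes down: there the authors suggest the model $f^\theta(x)=\theta x$ and leave the tail discussion implicit, whereas you notice that no tail issue arises at all once $(C)\equiv 0$. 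Do add the elementary algebraic inequality $\norm{(A)+(B)+(C)}^2\le 3(\norm{(A)}^2+\norm{(B)}^2+\norm{(C)}^2)$ explicitly when you sum the contributions; as written the bookkeeping of $\varepsilon/2$ and $\varepsilon/4$ is a little loose, although the constants are harmless once $(C)$ vanishes.
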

The previous result can be recast as follows: for every finite measure of finite second moment, the Dirichlet-to-Neumann operator for a fixed conductivity $a\in X(\Omega)$ can be approximated to any order of accuracy $\varepsilon$ in the $L^2(\mu)$ sense by a DeepONet with fixed parameters $\theta$. The precise description of this parameters is given in  Theorem \ref{MT1-general-case} below. Additionally, no further regularity for the boundary values are required other than $H^{1/2}$ and its dual space. 

\medskip

It is unclear to us whether or not the much desired uniform estimate
\[
\exists C_0>0 \quad \hbox{such that} \quad  \sup_{f\in H^{1/2}(\partial \Omega)}\norm{\Lambda_a f - F^{\theta}_\varepsilon f}_{-1/2} <C_0\varepsilon,
\]
does hold. This is due to the fact that we only have the injection $L^\infty \longrightarrow L^2$. However, since the $L^2$ convergence implies convergence in measure, one can state that for every $\lambda>0$ there exists a sequence of DeepONets $(F^{\theta_n}_{n})_{n\in\N}$ such that
\[
\lim_{n\to +\infty}\mu \left(\left\{ f \in  H^{1/2}(\partial \Omega) ~: ~ \norm{ \Lambda_a f - F^{\theta_n}_n f}_{-1/2} \ge \lambda \right\}\right) =0,
\]
and, up to a subsequence, one has almost convergence pointwise:
\[
\lim_{n\to+\infty} \norm{ \Lambda_a f - F^{\theta_n}_n f}_{-1/2}  =0, \quad \hbox{$\mu$-a.e. for $f$ in $H^{1/2}(\partial \Omega)$}.
\]
This means that with high probability, any $f\in H^{1/2}(\partial\Omega)$, $\Lambda_a f$ is close to a suitable approximation $F^{\theta_n}_{n}$.    


\medskip

Finally, the last aspect that we would like to mention is that the measure $\mu$ requires some particular properties, but it is highly general in perspective. Consequently, there is no need to ensure any particular additional configuration for the way of measuring the error term.

\subsection{Approximation of the full Calder\'on's mapping $a\mapsto \Lambda_a$}

Now we enunciate our second result, which proves the approximation of the full Calder\'on's mapping by infinite dimensional DeepOnets. Here, {\color{black}$W_{\mu}=L^2(H^{1/2}(\partial\Omega)\times H^{1/2}(\partial\Omega), \mu\otimes\mu; \R)$ is the vector space of measurable, real valued mappings $T$ defined on $H^{1/2}(\partial\Omega)\times H^{1/2}(\partial\Omega)$ such that 
\be\label{norma_del_demonio}
\|T\|_{W_\mu}^2 := \int_{H^{1/2}(\partial\Omega)} \int_{H^{1/2}(\partial\Omega)}| T(f,g) |^2 (\mu\otimes\mu)(df,dg)<+\infty. 
\ee
}
Notice that this space is suitable to measure the Dirichlet-to-Neumann operator $\Lambda_a: H^{1/2}(\partial\Omega) \longrightarrow H^{-1/2}(\partial\Omega)$. Later, in Lemma \ref{lemma:Lambda-linear-bounded}, we will sketch a proof that 
\begin{itemize}
\item for every $a\in X$, $\Lambda_a\in\ca{L}(H^{1/2}(\Omega), H^{-1/2}(\Omega))$, the topological vector space of linear continuous mappings, and
\item in Lemma \ref{Lem6p1} we will prove for any $\mu$ finite and of finite second moment, we will have 
\be\label{finitud}
\|\Lambda_a \|_{W_{\mu} } <+\infty.
\ee
\end{itemize}
This particular last result will provide the correct Hilbert-based setting for the Calder\'on's mapping.
\medskip

Recall that $d\geq 2$ is the dimension of the set $\Omega\subseteq \R^d$ under which the PDE associated to the Calder\'on's problem is placed. Using the norm in \eqref{norma_del_demonio}, we prove the following characterization of the Calder\'on's mapping via DeepONets:
\begin{theorem}\label{MT2}
Let $m>\frac{d}2$ and assume $a\in X(\Omega)\cap H^m(\Omega)$. Let $\mu$ and $\eta$ be any finite measures on $H^{1/2}(\partial \Omega)$ and $H^m(\Omega)$, respectively, with finite second moment. Let $\Lambda$ be the Calder\'on's mapping given by $\Lambda=(\Lambda_a)_{a\in X(\Omega)}$ in \eqref{Lambda} and satisfying \eqref{finitud}. Then, for all $\varepsilon>0$ the following is satisfied: there exists a  DeepOnet $\theta$ and its corresponding realization mapping 
\[
\mathcal F^{\theta}_\varepsilon  \colon X(\Omega) \cap H^m(\Omega) \to W_{\mu},
\]
such that
\be\label{eq:DO_Lambda_0}
\int_{X(\Omega)\cap H^{m}(\Omega)} \norm{\Lambda_a - \mathcal F^{\theta}_\varepsilon(a)}^2_{W_{\mu}} \eta(da) < \varepsilon.
\ee
\end{theorem}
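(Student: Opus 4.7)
The approach is to combine (i) the Lipschitz dependence of $\Lambda_a$ on $a$ in a topology made manageable by the Sobolev assumption $m>d/2$, (ii) inner regularity of the finite measure $\eta$, and (iii) the Chen--Chen style architecture underlying the DeepONets already used in Theorem \ref{MT1}, but now applied to a mapping whose input space is the conductivity space $H^m(\Omega)$ and whose output space is $W_\mu$. Because $m>d/2$, one has the continuous Sobolev embedding $H^m(\Omega)\hookrightarrow C(\overline\Omega)$; this is what lets the set $X(\Omega)\cap H^m(\Omega)$ behave well under finite-dimensional projections (uniform positivity of reconstructions is preserved on compacts).

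\textbf{Step 1: Continuity of $a\mapsto\Lambda_a$ into $W_\mu$.} For $a,b\in X(\Omega)\cap H^m(\Omega)$ sharing a positive lower bound $a_0$, standard elliptic comparison applied to the PDE satisfied by the difference of solutions yields, for each $f\in H^{1/2}(\partial\Omega)$,
\[
\norm{(\Lambda_a-\Lambda_b)f}_{-1/2}\le C(a_0)\,\norm{a-b}_{L^\infty(\Omega)}\,\norm{f}_{1/2}\le C(a_0)\,\norm{a-b}_{H^m(\Omega)}\,\norm{f}_{1/2}.
\]
Identifying $\Lambda_a$ with $(f,g)\mapsto\langle\Lambda_a f,g\rangle_{-1/2,1/2}$, this and the finite second moment of $\mu$ combine to give continuity of $a\mapsto\Lambda_a$ from $X(\Omega)\cap H^m(\Omega)$ to $W_\mu$; cf.\ Lemma \ref{Lem6p1} for the well-definedness.

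\textbf{Step 2: Tightness and finite-dimensional reduction.} Since $H^m(\Omega)$ is Polish, $\eta$ is inner regular; given $\ve>0$ one picks a compact $K\subset X(\Omega)\cap H^m(\Omega)$ with
\[
\int_{(X(\Omega)\cap H^m(\Omega))\setminus K}\bigl(1+\norm{a}_{H^m}^2\bigr)d\eta(a)<\frac{\ve}{C_1},
\]
for a constant $C_1$ fixed below. By the Sobolev embedding and compactness, $\inf_{a\in K,\,x\in\overline\Omega}a(x)=:a_0>0$ and $\sup_{a\in K}\norm{a}_{H^m}<\infty$. Fix orthonormal bases $(e_n)$ of $H^m(\Omega)$ and $(g_n)$ of $W_\mu$. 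Using Lemma \ref{lemma:aris} applied to $K$, choose $d$ so that $\norm{P_{H^m,d}a-a}_{H^m}$ is uniformly tiny on $K$; for $d$ large, the reconstructions $\tilde a=\sum_{i\le d}\langle a,e_i\rangle e_i$ still lie in $X(\Omega)$ (by uniform convergence and uniform positivity), and Step 1 upgrades this to a uniform bound on $\norm{\Lambda_{\tilde a}-\Lambda_a}_{W_\mu}$ on $K$.

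\textbf{Step 3: Output truncation, finite-dimensional NN approximation, and control on $K^c$.} The continuous image $\{\Lambda_{\tilde a}:a\in K\}\subset W_\mu$ is compact, hence its projection onto the first $m'$ basis modes $(g_n)_{n\le m'}$ approximates it to arbitrary accuracy uniformly in $a\in K$ for $m'$ large. This yields a continuous map $G_{d,m'}\colon P_{H^m,d}(K)\to\R^{m'}$ defined by $G_{d,m'}(c)=\bigl(\langle\Lambda_{\sum_i c_i e_i},g_j\rangle_{W_\mu}\bigr)_{j\le m'}$, and the classical finite-dimensional universal approximation theorem furnishes $\theta\in\ca{N}_{\sigma,d,m'}$ with $\sup_{c\in P_{H^m,d}(K)}\norm{f^\theta(c)-G_{d,m'}(c)}$ as small as wanted, while (by inserting a bounded activation or a smooth truncation in the last layer) one may also enforce $\norm{f^\theta(c)}\le C_2(1+\norm{c})$ globally. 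Setting $\mathcal F^\theta_\varepsilon:=E_{W_\mu,m'}\circ f^\theta\circ P_{H^m,d}$, a triangle inequality splits $\norm{\Lambda_a-\mathcal F^\theta_\varepsilon(a)}_{W_\mu}$ into a projection error, a tail error, and a DNN error, each small uniformly on $K$. Finally, using $\norm{\Lambda_a}_{W_\mu}^2\lesssim(1+\norm{a}_{H^m}^2)$ (from Step 1 and the operator bound $\norm{\Lambda_a}_{op}\lesssim\norm{a}_{L^\infty}$) and the growth control on $\mathcal F^\theta_\varepsilon$, the integral over $(X\cap H^m)\setminus K$ is absorbed by the tightness bound of Step 2, and the integral over $K$ is controlled by the uniform $K$-estimate, yielding \eqref{eq:DO_Lambda_0}.

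\textbf{Main obstacle.} The delicate point is Step 3: producing a DeepONet whose output is close to $\Lambda_a$ on the compact $K$ while simultaneously obeying a global growth bound of the form $\norm{\mathcal F^\theta_\varepsilon(a)}_{W_\mu}\lesssim 1+\norm{a}_{H^m}$, so that the tightness control of Step 2 can be deployed on the complement $K^c$ without the error blowing up there. Achieving compactness of the image $\{\Lambda_{\tilde a}:a\in K\}\subset W_\mu$ (needed to make a single finite output truncation $m'$ work uniformly in $a\in K$) is a consequence of Step 1, but the careful quantitative bookkeeping of the constants across the four sources of error is where the argument requires attention.
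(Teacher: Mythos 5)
Your proposal is correct but follows a genuinely different route from the paper's. The paper's proof of Theorem \ref{MT2} (via Theorem \ref{MT2-general-case}) is modular and short: it defines an extension $\Gamma\colon H^m(\Omega)\to W_\mu$ by setting $\Gamma(a)=\Lambda_a$ on $X(\Omega)\cap H^m(\Omega)$ and $\Gamma(a)=0$ elsewhere (equation \eqref{eq:Gamma}), proves in Lemma \ref{prop:L2mapping} that $\Gamma\in L^2(H^m(\Omega),\eta;W_\mu)$ using the Sobolev embedding $H^m\hookrightarrow L^\infty$ and the bound \eqref{Lambda-f-bound}, and then simply invokes the abstract universal-approximation Theorem \ref{theorem:infiniteapprox} as a black box; the restriction of the resulting integral to $X(\Omega)\cap H^m(\Omega)$ then gives \eqref{eq:DO_Lambda_0}. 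You, by contrast, re-derive the content of Theorem \ref{theorem:infiniteapprox} from scratch, specialized to the Calder\'on mapping, and in doing so you use more structure than the paper needs: you replace the Lusin-theorem step of the abstract proof with actual Lipschitz continuity of $a\mapsto\Lambda_a$ (your Step 1, valid on compacts where $a$ is uniformly bounded below), you use inner regularity of $\eta$ rather than Lusin to pick the compact $K$, and you observe --- a nice point the paper never exploits because its $\Gamma$ is defined as zero off $X(\Omega)\cap H^m(\Omega)$ --- that for $m>d/2$ the Sobolev embedding guarantees that finite-dimensional projections of elements of $K$ stay inside $X(\Omega)$, so $\Lambda_{\tilde a}$ is genuinely defined. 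Your approach yields a more concrete DeepONet and explains in a hands-on way why $m>d/2$ is needed; the paper's is cleaner, shorter, and requires only $L^2$-membership rather than continuity, so it would extend painlessly to non-Lipschitz forward maps. One small caveat: your control of the output on $K^c$ via ``inserting a bounded activation or smooth truncation in the last layer'' is slightly looser than what the paper actually uses, which is a specific ReLU clipping network $\theta_2\in\ca{N}_{\sigma,5,\kappa,\kappa}$ from \cite{Cas22} giving the uniform bound \eqref{eq:clipping-gamma}; to stay within the fixed architecture class $\ca{N}_{\sigma,L,d,m}$ you should quote that clipping lemma explicitly rather than modify the activation.
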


\begin{remark}
The DeepONet $\mathcal F^{\theta}$ has a particular structure, deeply inspired in \cite{Cas22} and previous references. For more details, see \eqref{eq:DO-def}.
\end{remark}

Theorems \ref{MT1} and  \ref{MT2} are proved in Section \ref{Sect:5}.

\medskip

There are interesting consequences that can be obtained from \eqref{eq:DO_Lambda_0}. Unfortunately, they do not solve the classical Calder\'on's question \eqref{eq:question}, but give interesting insights on this problem. Following the same idea as in the Dirichlet-to-Neumann mapping,  
\[
\eta \left(\left\{ a \in  X(\Omega) \cap H^m(\Omega) ~: ~ \norm{\Lambda_a - \mathcal F^{\theta}_\varepsilon(a)}_{W_{\mu}} >\lambda \right\}\right) < \frac{\varepsilon}{\lambda^2},
\]
revealing that for large $\lambda$ and most conductivities $a$, the Calder\'on's mapping $\Lambda$ is well-approximated by the DeepONet $ \mathcal F^{\theta}_\varepsilon$, and for {\it most of the conductivities} $a\in X(\Omega)\cap H^m(\Omega)$  (in a set of size at most $\varepsilon\epsilon^{-2}$) one has the pointwise estimate
\be\label{lomejor}
\norm{\Lambda_a - \mathcal F^{\theta}_\varepsilon(a)}_{W_{\mu}} <\epsilon.
\ee
Then one is led to the problem of finding the inverse of the DeepONet $ \mathcal F^{\theta}_\varepsilon$, which is in principle a much less complex object than the original conductivity.

{\color{black}
	\subsection{Approximation of the inverse Calderón's mapping}
	
	Our last result is a rigorous approximation of the inverse Calderón's mapping in a particular setting of bounded conductivities. {\color{black} For this, we give two different approaches of the inverse Calderón's mapping, each one with its corresponding Theorem. The first approach is classical and assumes the separability of a complicated Hilbert space.
		
		\begin{theorem}\label{MT3}
			Let $\mu$ and $\nu$ be probability measures on $H^{1/2}(\partial \Omega)$ and $L^2(H^{1/2}(\partial \Omega),\mu;H^{-1/2}(\partial \Omega))$, respectively, with finite second moment. Suppose additionally that $\mu$ is such that $L^2(H^{1/2}(\partial \Omega),\mu;H^{-1/2}(\partial \Omega))$ is a separable space. For $M \in (0,\infty)$ let $\Sigma_M$ be the inverse Calderón's mapping, i.e. the mapping $\Lambda_a \mapsto a$, restricted to $Y_M(\Omega)$, a subset of conductivities of bounded size, defined as in \eqref{eq:inverse_operator}. Then, for all $\varepsilon > 0$ there exists a DeepOnet $\theta$ with realization $F^{\theta}_{\varepsilon}$ such that
			\begin{equation}\label{eq:MT3}
				\int_{\Lambda(Y_M(\Omega))} \norm{\Sigma_M(T) - F^{\theta}(T)}^2 \nu(dT) < \varepsilon,
			\end{equation}
			where $\Lambda(Y_M(\Omega))$ is the image of $Y_M(\Omega)$ under the direct Calderón's mapping $a \mapsto \Lambda_a$, viewed as a subset of $L^2(H^{1/2}(\partial \Omega), \mu;H^{-1/2}(\partial \Omega))$.
		\end{theorem}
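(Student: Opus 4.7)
The plan is to apply the DeepONet approximation machinery from the proof of Theorem \ref{MT2} in a new Hilbert setting, namely with source space $H:=L^2(H^{1/2}(\partial\Omega),\mu;H^{-1/2}(\partial\Omega))$ (separable by hypothesis) and target space $W:=L^2(\Omega)$ (into which $Y_M(\Omega)\subset X(\Omega)\cap L^\infty(\Omega)$ is continuously embedded since $\Omega$ is bounded). Both being separable Hilbert spaces, we have orthonormal bases giving well-defined projection and extension maps $P_{H,d}$ and $E_{W,m}$ from Section \ref{Sect:1}, and the approximating object will be a DeepONet of the form $F^{\theta}_\varepsilon=E_{W,m}\circ f^{\theta}\circ P_{H,d}$ as in Definition \ref{def:deeponets}.

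The first step is to show that $\Sigma_M$ is continuous from $K:=\Lambda(Y_M(\Omega))\subset H$ into $W$, and that $K$ is compact in $H$. For continuity, the essential ingredient is the classical Alessandrini-type stability estimate for the inverse Calder\'on's problem, which on a bounded class of sufficiently smooth conductivities provides a log-type modulus of continuity for $\Lambda_a\mapsto a$ with respect to the operator norm on $\ca{L}(H^{1/2}(\partial\Omega),H^{-1/2}(\partial\Omega))$. To pass from that operator-norm statement to the $H$-norm used here, I would invoke an estimate of the same flavor as Lemma \ref{Lem6p1}, which exploits the finite second moment of $\mu$ to embed $\ca{L}(H^{1/2}(\partial\Omega),H^{-1/2}(\partial\Omega))$ continuously into $H$. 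For compactness of $K$, the choice of $Y_M(\Omega)$ as a ball in an $H^{m}(\Omega)$-type class with $m>d/2$ is essential: by Rellich-Kondrachov this ball is precompact in $L^2(\Omega)$, and continuity of the direct Calder\'on's mapping $\Lambda$ combined with the embedding above propagates precompactness into $H$.

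With $\Sigma_M\colon K\to W$ continuous and $K$ compact in $H$, the DeepONet approximation can be assembled by a threefold triangle inequality. Lemma \ref{lemma:aris} yields $d\in\N$ such that $\|P_{H,d}T-T\|_H<\delta$ uniformly for $T\in K$, so that $K$ is well-resolved by the first $d$ coordinates. Lemma \ref{L3p1} then provides a $T_k$-transformation of $K$ into a compact finite-dimensional image up to a further $\delta$-error. Composing one obtains a continuous map $P_{H,d}(K)\subset\R^d\to\R^m$ given by $P_{W,m}\circ\Sigma_M\circ E_{H,d}$; this can be approximated in sup-norm to within $\delta$ by a finite-dimensional network $f^{\theta}\in\ca{N}_{\sigma,d,m}$ by the classical universal approximation theorem for non-polynomial activations. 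Choosing $\delta$ suitably small and combining these three errors with the uniform boundedness of $\Sigma_M(K)\subset W$ yields a uniform bound $\sup_{T\in K}\|\Sigma_M(T)-F^{\theta}_\varepsilon(T)\|_W<\sqrt{\varepsilon}$, which integrated against the probability measure $\nu$ (supported on $K$) produces \eqref{eq:MT3}.

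The main obstacle is the continuity/stability step. While uniqueness and log-type stability for the Calder\'on inverse problem are classical on smooth compact classes, controlling the $H$-norm of $\Lambda_{a_1}-\Lambda_{a_2}$ uniformly by the operator-norm distance, and ensuring this control survives the restriction to $K$, is delicate; it is here that the finite second moment of $\mu$ enters in an essential way. A secondary technical difficulty is reconciling the two hypotheses on $\mu$, namely that $\mu$ must be rich enough to make $\Sigma_M$ injective and well-defined on $K$ (so that the stability estimate applies), while being tame enough to ensure the Bochner space $L^2(H^{1/2}(\partial\Omega),\mu;H^{-1/2}(\partial\Omega))$ is separable; explicit constructions (e.g.\ Gaussian measures supported on a separable Hilbert subspace of $H^{1/2}(\partial\Omega)$) would have to be checked to guarantee compatibility with both requirements.
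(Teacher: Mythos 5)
Your proposal takes a genuinely different and considerably more difficult route than the paper, and it contains a gap that would require substantial new work to close.

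The paper's proof (via Lemma \ref{lemma:L2_inv_sup1} and Theorem \ref{teo:DO_sup1}) never establishes continuity of $\Sigma_M$, never uses any Alessandrini-type stability estimate, and never proves compactness of $\Lambda(Y_M(\Omega))$ in $V_\mu$. Instead it extends $\Sigma_M$ by zero to $\widetilde\Sigma_M\colon V_\mu\to L^2(\Omega)$ and observes that $\widetilde\Sigma_M$ is trivially square-integrable against $\nu$: whenever $T=\Lambda_a$ with $a\in Y_M(\Omega)$ one has $\norm{\Sigma_M(T)}_{L^2(\Omega)}=\norm{a}_{L^2(\Omega)}\le M|\Omega|^{1/2}$, and off $\Lambda(Y_M(\Omega))$ the extension vanishes, so $\widetilde\Sigma_M\in L^2(V_\mu,\nu;L^2(\Omega))$ for any probability measure $\nu$. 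Theorem \ref{theorem:infiniteapprox} is then applied with $H=V_\mu$ and $W=L^2(\Omega)$; the Luzin-theorem step inside that theorem already produces a compact set on which the measurable map is continuous, so no a priori continuity or compactness is needed. Finally one restricts the resulting estimate back to $\Lambda(Y_M(\Omega))$. This is the whole argument.

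Your route, by contrast, aims for a uniform sup-norm bound over a compact $K=\Lambda(Y_M(\Omega))$, which forces you to prove (i) $\Sigma_M$ continuous on $K$ in the $V_\mu$-topology and (ii) $K$ compact in $V_\mu$. Step (ii) fails as stated: the paper's $Y_M(\Omega)$ is $\set{a\in L^\infty(\Omega):1/M\le a\le M}$ for $d=2$ and $\set{a\in W^{1,\infty}(\Omega):1/M\le a\le M}$ for $d>2$; in neither case is the gradient uniformly bounded over the set, so $Y_M(\Omega)$ is not a bounded subset of any Sobolev space compactly embedded in $L^2(\Omega)$, and the Rellich--Kondrachov argument you invoke does not apply. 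Your proposal implicitly swaps in a different, smaller $Y_M$ (an $H^m$-ball), which changes the statement being proved. Step (i) is at best a deep and delicate use of log-type stability, which — as you yourself flag — is the main obstacle; but it is an obstacle the paper never encounters, because an $L^2(\nu)$ error only requires square-integrability, not continuity. The key idea you are missing is that the zero extension together with the uniform bound $\norm{a}_{L^\infty}\le M$ already makes $\widetilde\Sigma_M$ a bounded measurable map, after which Theorem \ref{theorem:infiniteapprox} does all the compactness and continuity bookkeeping internally.
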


The second and last result is another characterization of the inverse Calder\'on's mapping via DeepONets. This time we profit of the product space and the representation of distributions in $H^{-1/2}(\partial\Omega)$ via the Riesz's theorem.
		
		\begin{theorem}\label{MT4}
			Let $\mu$ and $\nu$ be probability measures on $H^{1/2}(\partial \Omega)$ and $W_{\mu}$, respectively,  with finite second moment. For $M \in (0,\infty)$ let $\Sigma_M$ be the inverse Calderón's mapping, i.e. the mapping $\Lambda_a \mapsto a$, resctricted to $Y_M(\Omega)$, a subset of conductivities of bounded size, defined as in \eqref{eq:inverse_operator}. Then, for all $\varepsilon>0$ there exists a DeepOnet $\theta$ with realization $F^{\theta}_{\varepsilon}$ such that
			\begin{equation}
				\int_{D} \norm{\Sigma_M \circ \chi (B) - F^{\theta}_\varepsilon (B)}_{L^2(\Omega)}^2 \nu(dB) < \varepsilon,
			\end{equation}
			where $D$ is a proper subset of $W_{\mu}$, where the bilinear continuous mappings $B$ have a linear representation which is a Dirichlet-to-Neumann mapping, and $\chi$ represents the operator that gives the linear representation (from $H^{1/2}(\partial \Omega)$ into $H^{-1/2}(\partial \Omega)$) of the bilinear continuous mapping $B$ (from $H^{1/2}(\partial \Omega) \times H^{1/2}(\partial \Omega)$ into $\R$). 
		\end{theorem}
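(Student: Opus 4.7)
The plan is to view $\Sigma_M\circ\chi$ (extended by zero outside $D$) as an element of $L^2(\nu; L^2(\Omega))$ and to approximate it by DeepOnet realizations. As preparation, I note that $W_\mu = L^2(\mu\otimes\mu;\R)$ and $L^2(\Omega)$ are both separable Hilbert spaces. I would fix orthonormal bases $(h_i\otimes h_j)_{i,j\in\N}$ of $W_\mu$ (built from an orthonormal basis $(h_i)_i$ of $H^{1/2}(\partial\Omega)$) and $(\phi_k)_k$ of $L^2(\Omega)$, and construct the corresponding projection $P_{W_\mu,d}$ and extension $E_{L^2(\Omega),m}$ as in Definition \ref{def:deeponets}. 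By the definition of $D$, for every $B\in D$ the linear representation $\chi(B)$ coincides with some $\Lambda_a$ with $a\in Y_M(\Omega)$, so that $(\Sigma_M\circ\chi)(B)=a$ is well-defined, measurable in $B$, and uniformly bounded in $L^2(\Omega)$ by a constant depending only on $M$ and $|\Omega|$; extending by zero to $W_\mu\setminus D$ produces a bounded measurable map $G\colon W_\mu\to L^2(\Omega)$ belonging to $L^2(\nu; L^2(\Omega))$.

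The core of the argument is the density of DeepOnet realizations in $L^2(\nu; L^2(\Omega))$. Since $\nu$ has finite second moment on the separable Hilbert space $W_\mu$, Prokhorov's theorem yields tightness: for any $\varepsilon'>0$ there exists a compact $K\subset W_\mu$ with $\nu(W_\mu\setminus K)<\varepsilon'$. A standard Lusin-type argument allows one to approximate $G$ in $L^2(\nu)$ by a compactly supported continuous cylindrical function; Lemma \ref{lemma:aris} then supplies a dimension $d$ such that $P_{W_\mu,d}$ is uniformly close to the identity on $K$, and projecting the output of $G$ onto $\spawn\{\phi_1,\ldots,\phi_m\}$ for $m$ large reduces the problem to approximating a continuous map $\tilde g\colon\R^d\to\R^m$ on the compact image $P_{W_\mu,d}(K)\subset\R^d$. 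The classical Cybenko--Hornik universal approximation theorem then produces a finite-dimensional DNN realization $f^\theta$ achieving the required uniform approximation. Setting $F^\theta_\varepsilon := E_{L^2(\Omega),m}\circ f^\theta\circ P_{W_\mu,d}$ gives the desired DeepOnet: the error on $K$ is controlled by the uniform approximation, while the tail $W_\mu\setminus K$ is controlled via the $L^\infty$-bound on $G$ (together with a compatible finite-range bound on $F^\theta_\varepsilon$) and the estimate $\nu(W_\mu\setminus K)<\varepsilon'$. Restricting the resulting $L^2(\nu; L^2(\Omega))$-bound to $D$ yields the inequality in the statement.

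The main obstacle I anticipate is that $\Sigma_M\circ\chi$ need not be continuous from $W_\mu$ (equipped with its $L^2(\mu\otimes\mu)$ topology) into $L^2(\Omega)$: convergence in $W_\mu$ is strictly weaker than operator-norm convergence of the associated linear representations, so classical Alessandrini-type stability for the inverse Calder\'on problem does not directly apply. The plan sidesteps this by relying on $L^2(\nu; L^2(\Omega))$-density rather than pointwise continuity, using only measurability and the $L^\infty$-boundedness provided by the restriction to $Y_M(\Omega)$. A secondary but delicate technical point is the measurability of $D\subset W_\mu$, which must be verified so that the integral in the statement is well-defined; this should follow from $D$ being the preimage under the Riesz identification $\chi$ of the image of $Y_M(\Omega)$ under the direct Calder\'on map, both of which are well-structured sets in their respective topologies.
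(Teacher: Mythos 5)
Your proposal is correct and matches the paper's approach: extend $\Sigma_M\circ\chi$ by zero to $\widetilde\pi_M\colon W_\mu\to L^2(\Omega)$, verify square-integrability via the uniform bound $a_B\le M$ on $Y_M(\Omega)$ together with $\nu$ being a probability measure (this is Lemma \ref{lemma:L2inv_BL}), invoke the Hilbert-space DeepONet density result (Theorem \ref{theorem:infiniteapprox}) to approximate $\widetilde\pi_M$ in $L^2(W_\mu,\nu;L^2(\Omega))$, and finally restrict the integral to $D$. The only difference is that you unroll the proof of Theorem \ref{theorem:infiniteapprox} inline (tightness of $\nu$, Lusin, finite-dimensional projection via Lemma \ref{lemma:aris}, finite-dimensional UAT, tail control by the $L^\infty$ bound), whereas the paper cites that theorem as a black box; your side remark on the measurability of $D$ is a fair concern the paper leaves implicit.
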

	}
	
	These last results shows that in standard physical situations, the full inverse Calder\'on's problem admits a suitable approximation via DeepONets.
}

\subsection{Method of proof}

The rest of this paper is devoted to the proof of Theorems \ref{MT1}, \ref{MT2} and \ref{MT3}. Their proofs are highly related and require a key property: being able to establish each mapping (Calder\'on{\color{black}'s}, Dirichlet-to-Neumann) as a function defined in a possible high dimensional Hilbert space. This is unclear from the beginning, mainly because linear maps need not be included in square integrable spaces, for instance. The situation is even worse in the case of nonlinear maps. Here there is a key role played by the measures taken into account to measure errors. It will become clear from the proof that the hypothesis of existence of a finite measure with finite second moment is highly necessary, and overcoming or improving this condition is a necessity in future applications of the method. However, even in this restricted setting, Theorem \ref{MT1} and \ref{MT2} show that highly complex mappings such as those involved in the simplest inverse problems are well-approximated via DeepONets. Theorem \ref{MT3} will be a suitable improvement of previous results.

\medskip

Another key instance in the proof is Lemma \ref{Lem6p1}, which establishes the conditions under which one can embed the Calder\'on's mapping into a square-integrable Hilbert vector space. Then we will make use of a general result proved by one of us in \cite{Cas22}, stated here in Theorem \ref{vago}, to complete the proof of approximation. The obtained DeepONets can be further characterized in terms of dimensions, number of required hidden layers, and other aspects that will become clear in the proof itself.

%
%
%

\section{A brief review on the Dirichlet-to-Neumann operator}\label{Sect:2}

In this section we review a few results on Calderón{\color{black}'s} problem. We follow the notation of \cite{Sal08}. These results will allow us to express the Calder\'on's mapping as an approximation problem via deep learning tools.
First of all, we will review the classical continuity of the Dirichlet-to-Neumann map.

\subsection{Weak formulation of the PDE (\ref{Calderon})}
Let $\prom{\cdot,\cdot}$ denote the duality paring between a space and its dual. Let $f\in H^{1/2}(\partial\Omega)$, via integration by parts one has the classical weak formulation of equation \eqref{Calderon},
\begin{equation}\label{weak-fomulation1}
    \left\{ 
        \begin{aligned}
            \prom{a\nabla u,\nabla v} &= 0,\ \forall v\in H_0^1(\Omega),\\
            u|_{\partial\Omega} &= f,\ \text{in the trace sense}.
        \end{aligned}    
    \right.
\end{equation}
Now, since $f\in H^{1/2}(\partial\Omega)$, there exists $w\in H^1(\Omega)$ such that $w|_{\partial\Omega}=f$ and $\norm{w}_{H^1(\Omega)}\le C\norm{f}_{H^{1/2}(\partial\Omega)}$ for some $C>0$ independent of $f$ (this is the surjectivity of the trace operator $H^{1}(\Omega)\to H^{1/2}(\partial\Omega)$). Being $w$ fixed, consider now the following problem:
\begin{equation}\label{weak-fomulation2}
    \left\{ 
        \begin{aligned}
            \prom{a\nabla \Tilde{u},\nabla v} &= -\prom{a\nabla w, v},\ \forall v\in H_0^1(\Omega),\\
            \Tilde{u}|_{\partial\Omega} &= 0,\ \text{in the trace sense}.
        \end{aligned}    
    \right.
\end{equation}
Provided $a\in L^{\infty}(\Omega)$ and that exists $a_0>0$ such that $a\ge a_0$, {\color{black} one can check that the hypotheses of Lax-Milgram's theorem are met}. Thus, equation \eqref{weak-fomulation2} has a unique solution $\Tilde{u}\in H^1_0(\Omega)$. Then, $u:=\Tilde{u} + w\in H^{1}(\Omega)$ is the unique solution to \eqref{weak-fomulation1} and satisfies $\norm{u}_{H^1(\Omega)}\le C\norm{f}_{H^{1/2}(\partial\Omega)}$ for some $C>0$. This means that the problem \eqref{Calderon} is well-posed, and in view of this, it will be helpful to define the set of all $a$'s for which \eqref{Calderon} is well-posed. See Definition \ref{def:X-def} above.

\medskip

As it was stated in the introduction of this article, the Calderon's problem consists in finding the conductivity function $a$ from the knowledge of the Dirichlet-to-Neumann operator $\Lambda_a\colon H^{1/2}(\Omega)\to H^{-1/2}(\Omega)$ in \eqref{DNmap}. However, this mapping needs a rigorous definition in the setting established for \eqref{weak-fomulation1} and \eqref{weak-fomulation2}. This is done in Definition \ref{def:Lambda}. 

\begin{definition}\label{def:Lambda} 
    Let $a\in X(\Omega)$. Let $f,g\in H^{1/2}(\Omega)$, and let $u$ be the solution $u$ to \eqref{weak-fomulation1} with $f$ as boundary value. Let also $v\in H^1(\Omega)$ be any function such that $v|_{\partial\Omega}=g$ in the trace sense. The Dirichlet-to-Neumann operator $\Lambda_{a}\colon H^{1/2}(\Omega)\to H^{-1/2}(\Omega)$ is defined as
    \begin{align}\label{weak_Lambda}
        \Lambda_a f \in H^{-1/2}(\Omega), \qquad g\in H^{1/2}(\Omega), \qquad  \Lambda_a f (g) = \int a\nabla u\nabla v.
    \end{align} 
\end{definition}
This precise definition of the Dirichlet-to-Neumann operator will be key in forthcoming sections, see e.g. Lemma \ref{prop:L2mapping}.


\subsection{Boundedness of the Dirichlet-to-Neumann mapping}

Lemma \ref{lemma:Lambda-linear-bounded} below is a classical result in the Calderon's problem literature (see e.g. \cite[Lemma $3.4$]{Sal08} for an equivalent case). We sketch the proof for the sake of completeness, and some bounds will be useful later on.

\begin{lemma}\label{lemma:Lambda-linear-bounded}
    For any $a\in X(\Omega)$ as in \eqref{XOmega}, $\Lambda_a\in\ca{L}(H^{1/2}({\color{black}\partial}\Omega), H^{-1/2}({\color{black}\partial}\Omega))$, {\color{black}the space of} linear bounded operators from $H^{1/2}({\color{black}\partial}\Omega)$ into $H^{-1/2}({\color{black}\partial}\Omega)$.
\end{lemma}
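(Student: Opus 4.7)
The plan is to establish Lemma \ref{lemma:Lambda-linear-bounded} in three clean steps: well-definedness, linearity, and boundedness. First, I would verify that Definition \ref{def:Lambda} does not depend on the choice of the extension $v$ of $g$. If $v_1, v_2 \in H^1(\Omega)$ both have trace $g$, then $v_1 - v_2 \in H^1_0(\Omega)$ is an admissible test function in the weak formulation \eqref{weak-fomulation1} for $u$, so $\prom{a\nabla u, \nabla(v_1-v_2)} = 0$, confirming that $\Lambda_a f(g)$ is intrinsic to $g$. The same identity shows that $\Lambda_a f$ is linear in $g$ (since the bilinear form is) and linear in $f$ (since $f \mapsto u$ is linear by uniqueness of the weak solution of \eqref{weak-fomulation1}).

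Next, for the boundedness step, I would fix $f, g \in H^{1/2}(\partial\Omega)$ and pick a controlled extension of $g$. Using the bounded right inverse of the trace operator $H^1(\Omega) \to H^{1/2}(\partial\Omega)$, I choose $v \in H^1(\Omega)$ with $v|_{\partial\Omega} = g$ and $\norm{v}_{H^1(\Omega)} \le C_\Omega \norm{g}_{1/2}$, where $C_\Omega$ depends only on $\Omega$. Applying Cauchy--Schwarz to \eqref{weak_Lambda},
\begin{equation*}
|\Lambda_a f(g)| \;\le\; \norm{a}_{L^\infty(\Omega)} \norm{\nabla u}_{L^2(\Omega)} \norm{\nabla v}_{L^2(\Omega)} \;\le\; \norm{a}_{L^\infty(\Omega)} \norm{u}_{H^1(\Omega)} \norm{v}_{H^1(\Omega)}.
\end{equation*}

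Now I invoke the well-posedness estimate recalled right after \eqref{weak-fomulation2}, namely $\norm{u}_{H^1(\Omega)} \le C \norm{f}_{1/2}$ for some $C = C(\Omega, a_0, \norm{a}_{L^\infty})$ coming from Lax--Milgram applied to the bilinear form $(u_1,u_2)\mapsto \prom{a\nabla u_1,\nabla u_2}$ on $H^1_0(\Omega)$ (whose coercivity constant is $a_0$ via Poincar\'e). Combining the two bounds yields
\begin{equation*}
|\Lambda_a f(g)| \;\le\; C'\, \norm{f}_{1/2}\, \norm{g}_{1/2},
\end{equation*}
with $C'$ independent of $f$ and $g$. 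Taking the supremum over $g$ with $\norm{g}_{1/2} \le 1$ shows $\Lambda_a f \in H^{-1/2}(\partial\Omega)$ and $\norm{\Lambda_a f}_{-1/2} \le C' \norm{f}_{1/2}$, which is the claimed continuity $\Lambda_a \in \mathcal{L}(H^{1/2}(\partial\Omega), H^{-1/2}(\partial\Omega))$.

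The only mildly delicate point is keeping track of all three ingredients simultaneously (independence of $v$, the controlled extension, and the Lax--Milgram estimate) so that the final constant $C'$ depends only on $\Omega$, $a_0$ and $\norm{a}_{L^\infty(\Omega)}$. This explicit dependence, though not strictly required for the qualitative statement of the lemma, is worth recording because it is precisely what is needed later when one wants to make the bound \eqref{finitud} quantitative and to produce the uniform control over families of conductivities that Theorem \ref{MT2} and the subsequent DeepONet approximations rely on.
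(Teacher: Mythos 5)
Your proof is correct and follows essentially the same path as the paper's: independence of the choice of extension $v$, linearity via the bilinear form and uniqueness of the weak solution, and boundedness via Cauchy--Schwarz, a controlled trace extension, and the Lax--Milgram a priori estimate for $u$. If anything you are more precise than the paper in recording that the constant from this route depends on $a_0$ as well as on $\Omega$ and $\norm{a}_{L^\infty}$, whereas the paper asserts $\norm{\Lambda_a}_{op}\le C\norm{a}_{\infty}$ with $C=C(d,\Omega)$ only --- a statement that would actually require the Dirichlet variational characterization $\int a|\nabla u|^2\le\int a|\nabla w|^2$ and a weighted Cauchy--Schwarz rather than the bare Lax--Milgram bound.
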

\begin{proof}
    Let $a\in X$. $C>0$ denotes a constant that depends only on $(d,\Omega)$ and may change from one line to another.
    
    \medskip
    
    \noindent
    {\bf $\Lambda_a$ well-defined}. Let $f\in H^{1/2}({\color{black}\partial}\Omega)$. We shall prove that, according to Definition \ref{def:Lambda}, $\Lambda_a f$ is an element of $H^{-1/2}({\color{black}\partial}\Omega)$. Let $g\in H^{1/2}({\color{black}\partial}\Omega)$, we first claim that the real number $\Lambda_a f(g)$ does not depend on the choice of $v$ in Definition \ref{def:Lambda}. Indeed, let $v_1,v_2\in H^1(\Omega)$ be such that $v_1|_{\partial\Omega}=v_2|_{\partial\Omega}=g$, since $u$ is the unique solution to \eqref{weak-fomulation1} and $v_1-v_2\in H^1_0(\Omega)$, we know that
    \begin{align*}
        \int a\nabla u\nabla (v_1 - v_2) = 0\implies\int a\nabla u\nabla v_1 = \int a\nabla u\nabla v_2.
    \end{align*}
    Therefore $\Lambda_a f$ is well-defined as a function. Now, let $g_1,g_2\in H^{1/2}(\Omega)$, $v\in H^1(\Omega)$ such that $v|_{\partial\Omega}=g_1+g_2$ and $v_1,v_2\in H^1(\Omega)$ such that $v_1|_{\partial\Omega}=g_1$ and $v_2|_{\partial\Omega}=g_2$. Since $(v_1+v_2)|_{\partial\Omega} = v|_{\partial\Omega} = g_1 + g_2$, we get that $\Lambda_a f( g_1 + g_2) = \Lambda_a f(g_1) + \Lambda_a f(g_2)$. The linearity follows by noting that $ \Lambda_a f(0) = 0$. 
    
    \medskip
    
    \noindent
    Now we prove continuity. Let $g\in H^{1/2}(\partial\Omega)$, then
    \begin{align*}
       | \Lambda_a f(g) |&= \left| \int a\nabla u \nabla v \right|\le \norm{a}_{\infty} \norm{u}_{H^1(\Omega)} \norm{v}_{H^1(\Omega)}\le C\norm{a}_{\infty} \norm{f}_{1/2} \norm{g}_{1/2}.
    \end{align*}
    Therefore
    \begin{align}\label{Lambda-f-bound}
        \norm{\Lambda_a f}_{-1/2}\le C\norm{a}_{\infty}\norm{f}_{1/2},
    \end{align}
    which implies that $\Lambda_a f\in H^{-1/2}(\partial\Omega)$ and we have the right to write $\Lambda_a f = \prom{\Lambda_a f,\cdot}$.
    
    \medskip
    
    \noindent
    {\bf Linearity}. Let $f_1,f_2\in H^{1/2}(\partial\Omega)$, and let $u\in H^1(\Omega)$ be the associated solution to $f_1+f_2$ and $u_1,u_2\in H^1(\Omega)$ the associated solutions to $f_1,f_2$ respectively. By the linearity of the trace operator and the uniqueness of the solution of \eqref{weak-fomulation1}, we get that $u = u_1 + u_2$ and then for any $g\in H^{1/2}(\partial\Omega)$, 
    \begin{align*}
        \prom{\Lambda_a (f_1+f_2), g} = \int a\nabla u\nabla v =  \int a\nabla (u_1+u_2)\nabla v =  \prom{\Lambda_a f_1, g} + \prom{\Lambda_a f_2, g}.
    \end{align*}
    The linearity follows since $\Lambda_0 = 0$.
    
    \medskip
    
    \noindent
    {\bf Continuity}. Continuity follows from \eqref{Lambda-f-bound}:
    \begin{align}\label{Lambda-a-bound}
        \norm{\Lambda_a}_{\ca{L}(H^{1/2}(\Omega), H^{-1/2}(\Omega))} \le C\norm{a}_{\infty}.
    \end{align}
    This finishes the proof.
\end{proof}

\begin{remark}\label{remark:X-set}
    The main reason behind the definition of $X(\Omega)$ in \eqref{XOmega} is that we want to interpret $\Lambda$ as a mapping defined over a Hilbert space. Since $\Omega$ is bounded, and therefore of finite Lebesgue measure, one can see that $X(\Omega)$ is included in the space
    \begin{align}\label{eq:X-L2}
      X_2(\Omega):=  \set{a \in L^2(\Omega) ~ \colon ~ \exists ~ a_1,a_0>0 \quad \text{s.t.} \quad a_0 \leq a \leq a_1~\hbox{a.e.}}.
    \end{align}
 Moreover, one has $X(\Omega)=X_2(\Omega)$.  Thus from now on we work with $X(\Omega)$ as a subset of the Hilbert space $L^2(\Omega)$ as in \eqref{eq:X-L2}. 
 \end{remark}

\begin{remark}
Note that from Remark \ref{remark:X-set} and the proof of Lemma \ref{lemma:Lambda-linear-bounded}, one has that Lemma \ref{lemma:Lambda-linear-bounded} is also valid if the data on the boundary and $a$ in $X(\Omega)$ have more regularity; indeed, any Hilbert space $H^m(\Omega)$, $m\geq 0$, is suitable ambient space for our proofs, provided the boundary data is chosen in $H^{m-\frac12}(\partial\Omega)$.
\end{remark}

Being a bounded linear operator, $\Lambda_a$ has not enough properties to be easily approximated by DNN. In the next section, we will prove that $\Lambda_a$ can be understood as a square integrable operator in suitable Hilbert spaces.

{\color{black} \subsection{Invertibility of the Dirichlet-to-Neumann mapping}
	
	As we said in previous sections, there exist a variety of ``physically motivated'' conditions on the conductivity $a$ in order to have a well-definition and suitable properties of the Dirichlet-to-Neumann mapping $\Lambda_a$. For this end, let $M \in (0,+\infty)$ and define the space $Y_M(\Omega)$ as follows:
	\begin{itemize}
		\item If $d=2$, \quad $\displaystyle Y_M(\Omega) := \left\{ a \in L^{\infty}(\Omega): \; \frac{1}{M} \leq a \leq M \right\}$;
		\item If $d>2$, \quad $\displaystyle Y_M(\Omega) := \left\{ a \in W^{1,\infty}(\Omega): \; \frac 1M \leq a \leq M \right\}$.
	\end{itemize}
	Notice that for all $M \in (0,+\infty)$, the Calderón's mapping restricted to the space $Y_M(\Omega)$ is an injective operator, that is to say, $\Lambda$ satisfies \eqref{eq:question}. This is done using only the fact that $a \in L^{\infty}(\Omega)$ (in the case $d=2$) or $a \in W^{1,\infty}(\Omega)$ (in the case $d >2$), see e.g. \cite{HT,CR,AP}. On the other hand, the condition
	\[
	\frac 1M \leq a \leq M,
	\]
	is a more or less standard assumption on the conductivities, and it has a physical and in-real-life sustenance. In this context, due the injectivity in $Y_M(\Omega)$, if we restrict the Calderón's mapping to the image of $Y_M(\Omega)$, that restricted operator is bijective. In other words, the operator
	\[
	\widetilde{\Lambda}_M : Y_M(\Omega) \longrightarrow \Lambda(Y_M(\Omega)) \subset \mathcal L(H^{1/2}(\partial \Omega),H^{-1/2}(\partial \Omega)),
	\]
	is bijective. Denote by $\Sigma_M$ the inverse operator of $\widetilde{\Lambda}_M$, defined as
	\begin{equation}\label{eq:inverse_operator}
		\begin{matrix}
			\Sigma_M : & \Lambda(Y_M(\Omega)) & \longrightarrow & Y_M(\Omega)\\
			& T & \longmapsto & \Sigma(T) := \widetilde{\Lambda}^{-1}(T).
		\end{matrix}
	\end{equation}
	%
	%
	%
	%
	%
As equal as in the direct Calderón's mapping, we need to understand $\Sigma_M$ as a square integrable operator between separable Hilbert spaces.
} 
\section{An improved characterization of the Dirichlet-to-Neumann operator}\label{Sect:3}

\subsection{Integrability of $\Lambda_a$}

In this section we are devoted to prove integrability properties of the operators $\Lambda$ and $\Lambda_a$ for fixed $a$. First, as previously stated, we write $\Lambda_a$ for fixed $a\in X(\Omega)$ as a square integrable mapping between Hilbert spaces. Then we do the same with the mapping $a \mapsto \Lambda_a$, although this is subtle since a suitable extension is needed. {\color{black}Recall that for a probability measure $\mu$ on $H^{1/2}(\partial \Omega)$ we denote
\begin{equation}\label{W_mu}
    W_{\mu}:= L^2\Big(H^{1/2}(\partial \Omega)\times H^{1/2}(\partial \Omega),\mu\otimes\mu; \R \Big).
\end{equation}
In Lemma \ref{Lem6p1} we will show that our definition of $W_{\mu}$ is consistent with the mapping $a\mapsto\Lambda_a$. 

\begin{lemma}\label{lemma:linear-bilinear}
One has that 
\[
\ca{L}(H^{1/2}(\partial\Omega),H^{-1/2}(\partial\Omega))\hookrightarrow\ca{B}(H^{1/2}(\partial\Omega)\times H^{1/2}(\partial\Omega);\R), 
\]
with
\[
\ca{B}(H^{1/2}(\partial\Omega)\times H^{1/2}(\partial\Omega);\R)\subset L^2(H^{1/2}(\partial\Omega)\times H^{1/2}(\partial\Omega),\mu\otimes\mu;\R).
\]
\end{lemma}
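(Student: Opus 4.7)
The plan is to prove the two statements of the lemma independently, both essentially by unwinding definitions and applying a straightforward continuity estimate.

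First I would construct the natural map
\[
\Phi \colon \ca{L}(H^{1/2}(\partial\Omega),H^{-1/2}(\partial\Omega)) \longrightarrow \ca{B}(H^{1/2}(\partial\Omega)\times H^{1/2}(\partial\Omega);\R),
\]
defined by $\Phi(T)(f,g) := \bra{Tf}{g}$, where $\bra{\cdot}{\cdot}$ denotes the duality pairing between $H^{-1/2}(\partial\Omega)$ and $H^{1/2}(\partial\Omega)$. Bilinearity of $\Phi(T)$ follows from the linearity of $T$ and the linearity of the pairing in each entry. Continuity is immediate from
\[
|\Phi(T)(f,g)| \le \norm{Tf}_{-1/2}\norm{g}_{1/2} \le \norm{T}_{op}\norm{f}_{1/2}\norm{g}_{1/2},
\]
which simultaneously shows that $\Phi$ itself is linear, continuous and injective (if $\Phi(T)\equiv 0$, then $\bra{Tf}{g}=0$ for every $g\in H^{1/2}(\partial\Omega)$, forcing $Tf=0$ as an element of $H^{-1/2}(\partial\Omega)$, and hence $T=0$). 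This realises $\ca{L}(H^{1/2},H^{-1/2})$ as a continuously embedded subspace of the bilinear forms.

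Next, for the second inclusion, I would invoke the standing assumption on $\mu$ (recalled in the remark following Definition \ref{def:deeponets}): $\mu$ is a finite measure on $H^{1/2}(\partial\Omega)$ with finite second moment, i.e. $\int \norm{f}_{1/2}^2\, d\mu(f)<+\infty$. Given $B\in\ca{B}(H^{1/2}(\partial\Omega)\times H^{1/2}(\partial\Omega);\R)$ with continuity constant $C_B$, continuity of $B$ with respect to the product topology guarantees Borel measurability on the (separable Hilbert) product space, and by Fubini--Tonelli,
\[
\int_{H^{1/2}\times H^{1/2}} |B(f,g)|^2\, d(\mu\otimes\mu)(f,g) \le C_B^2 \left(\int \norm{f}_{1/2}^2\, d\mu(f)\right)^{\!2} < +\infty,
\]
which is exactly the condition $B\in W_\mu$ from \eqref{W_mu}.

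The main obstacle, if one can call it that, is essentially cosmetic: one must check that the Borel $\sigma$-algebra on $H^{1/2}(\partial\Omega)\times H^{1/2}(\partial\Omega)$ is compatible with the product measure $\mu\otimes\mu$, so that Fubini applies in the form used above. Since $H^{1/2}(\partial\Omega)$ is a separable Hilbert space, the product Borel $\sigma$-algebra agrees with the Borel $\sigma$-algebra of the product topology, and continuous maps are automatically measurable, so this step reduces to a remark. The rest is the elementary estimate above.
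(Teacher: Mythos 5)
Your proof is correct and follows essentially the same route as the paper: define $B_A(f,g):=\bra{Af}{g}$, establish continuity via $|B_A(f,g)|\le\norm{A}_{op}\norm{f}_{1/2}\norm{g}_{1/2}$, and bound the $L^2(\mu\otimes\mu)$ norm by $\norm{B}_{op}^2\bigl(\int\norm{f}_{1/2}^2\,d\mu\bigr)^2$ using the finite second moment of $\mu$. You go slightly further than the paper by also verifying injectivity of the embedding and commenting on Borel measurability of the product, but the core argument is identical.
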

\begin{proof}
	For the first inclusion, let $A\in\ca{L}(H^{1/2}(\partial\Omega),H^{-1/2}(\partial\Omega))$; we shall prove that there exists a representative $B_A\in\ca{B}(H^{1/2}(\partial\Omega)\times H^{1/2}(\partial\Omega);\R)$. For any $(f,g)\in H^{1/2}(\partial\Omega)\times H^{1/2}(\partial\Omega)$ define $B_A(f,g):=A(f)(g)$, then bilinearity is a consequence of this definition and continuity follows from the following estimate:
	\begin{align*}
		|B_A(f,g)| \le \norm{A}_{op}\norm{f}_{1/2}\norm{g}_{1/2}.
	\end{align*}
	For the second inclusion, take $B$ a bilinear form, then
	\begin{align*}
		\int_{H^{1/2}(\partial\Omega)}\int_{H^{1/2}(\partial\Omega)}|B(f,g)|^2(\mu\otimes\mu)(df,dg)\le \norm{B}_{op}^2\parent{\int_{H^{1/2}(\partial\Omega)}\norm{f}^2_{1/2}\mu(df)}^2,
	\end{align*}
	from which the claim follows. 
\end{proof}

\begin{lemma}\label{Lem6p1}
    Let $\mu$ be a probability measure on $H^{1/2}(\partial \Omega)$ with finite second moment and let $a \in X(\Omega)$. Then $\Lambda_a $ can be identified with an element of $W_{\mu}$.
\end{lemma}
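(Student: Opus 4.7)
The plan is to combine the two preceding lemmas with the finite second moment hypothesis on $\mu$. First, I would invoke Lemma \ref{lemma:Lambda-linear-bounded} to guarantee that for $a\in X(\Omega)$ the operator $\Lambda_a$ belongs to $\ca{L}(H^{1/2}(\partial\Omega),H^{-1/2}(\partial\Omega))$, together with the quantitative bound
\[
\norm{\Lambda_a}_{op}\le C\norm{a}_\infty,
\]
established in \eqref{Lambda-a-bound}. This is essential because it allows me to move from a purely qualitative statement (linearity and continuity) to a concrete estimate that can be integrated against a product measure.

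Next, I would apply Lemma \ref{lemma:linear-bilinear} to associate with $\Lambda_a$ its canonical bilinear representative
\[
B_{\Lambda_a}(f,g):=\Lambda_a(f)(g),\qquad (f,g)\in H^{1/2}(\partial\Omega)\times H^{1/2}(\partial\Omega),
\]
which is bilinear and continuous, satisfying the pointwise bound
\[
|B_{\Lambda_a}(f,g)|\le \norm{\Lambda_a}_{op}\,\norm{f}_{1/2}\,\norm{g}_{1/2}\le C\norm{a}_\infty\,\norm{f}_{1/2}\,\norm{g}_{1/2}.
\]
This is the identification via which $\Lambda_a$ will be viewed as a scalar-valued function on the product space, and hence as a potential element of $W_\mu$.

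The final step is to verify square-integrability. Squaring the previous inequality and integrating against $\mu\otimes\mu$ yields, by Tonelli's theorem,
\[
\norm{B_{\Lambda_a}}_{W_\mu}^2 \le C^2\norm{a}_\infty^2\left(\int_{H^{1/2}(\partial\Omega)}\norm{f}_{1/2}^2\,\mu(df)\right)\left(\int_{H^{1/2}(\partial\Omega)}\norm{g}_{1/2}^2\,\mu(dg)\right).
\]
Both factors on the right are finite by the assumption that $\mu$ has finite second moment, and $\norm{a}_\infty<+\infty$ since $a\in X(\Omega)\subset L^\infty(\Omega)$. This directly establishes \eqref{finitud} and completes the identification of $\Lambda_a$ with an element of $W_\mu$.

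Honestly, the argument presents no serious obstacle: the nontrivial work is already packaged in Lemma \ref{lemma:Lambda-linear-bounded} and Lemma \ref{lemma:linear-bilinear}. The only subtle point to keep in mind is that the identification is made up to equality $\mu\otimes\mu$-a.e., so strictly speaking $\Lambda_a$ is identified with the equivalence class of $B_{\Lambda_a}$ in $W_\mu$; this is harmless for all subsequent approximation-theoretic applications.
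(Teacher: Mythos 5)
Your proof is correct and follows the same route as the paper: the paper's proof of Lemma \ref{Lem6p1} is a one-liner that simply cites Lemma \ref{lemma:linear-bilinear} (whose proof already contains exactly the bilinear identification $B_{\Lambda_a}(f,g)=\Lambda_a(f)(g)$ and the square-integrability estimate you wrote out), together with the membership $\Lambda_a\in\ca{L}(H^{1/2}(\partial\Omega),H^{-1/2}(\partial\Omega))$ from Lemma \ref{lemma:Lambda-linear-bounded}. You have essentially unpacked that chain of references into a self-contained argument, which is fine and adds nothing problematic.
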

\begin{proof}
     The statement is a direct consequence of Lemma \ref{lemma:linear-bilinear} since $\Lambda_a\in\ca{L}(H^{1/2}(\partial\Omega),H^{-1/2}(\partial\Omega))$.
\end{proof}
}
{\color{black}From now on we do not make a difference between $\Lambda_a$ and its representative in $W_{\mu}$}. Recall that it is always possible to find such measure $\mu$; see Lemma \ref{Measure} for further details. 

\subsection{Extension of the Calder\'on's mapping}

Recall $X(\Omega)=X_2(\Omega)$ as stated in \eqref{eq:X-L2} and $W_\mu$ as in \eqref{W_mu}.  Let $\mu$ and $\eta$ be probability measures on $H^{1/2}(\partial \Omega)$ and $H^m(\Omega)$, respectively, with finite second moment. Let $\Gamma$ be the operator
    \be\label{eq:Gamma}
        \begin{aligned}
            \Gamma: H^m(\Omega) \, &\longrightarrow \quad W_{\mu}\\
            a\quad  &\longmapsto \quad  \Gamma(a) := \begin{cases}
                \Lambda_a  & a \in X(\Omega) \cap H^m(\Omega),\\
                0  &a \notin X(\Omega) \cap H^m(\Omega).
            \end{cases}
        \end{aligned}
    \ee

{\color{black}
	
\begin{lemma}[Extension]\label{prop:L2mapping} Assume additionally that $a\in H^m(\Omega)$, with $m>\frac{d}2$. Then 
 $\Gamma$ restricted to $H^m(\Omega)$ is a $L^2(H^m(\Omega),\eta;W_{\mu})$ mapping.
\end{lemma}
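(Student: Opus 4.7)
The plan is to dominate $\|\Gamma(a)\|_{W_\mu}$ by a function of $a$ whose square is $\eta$-integrable. The two key observations already at our disposal are (i) the operator bound $\|\Lambda_a\|_{op}\le C\|a\|_\infty$ from Lemma \ref{lemma:Lambda-linear-bounded}, and (ii) the embedding of $\mathcal L(H^{1/2}(\partial\Omega),H^{-1/2}(\partial\Omega))$ into $W_\mu$ established in Lemma \ref{lemma:linear-bilinear}, whose proof gives the quantitative bound $\|B_A\|_{W_\mu}\le \|A\|_{op}\, M_\mu$, with $M_\mu:=\int_{H^{1/2}(\partial\Omega)}\|f\|_{1/2}^2\,\mu(df)<\infty$ since $\mu$ has finite second moment.

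First I would split $H^m(\Omega)=\big(X(\Omega)\cap H^m(\Omega)\big)\cup\big(H^m(\Omega)\setminus X(\Omega)\big)$. On the second set $\Gamma(a)=0$ by definition, contributing nothing to the integral. On the first set, combining (i) and (ii) yields the pointwise estimate
\[
\|\Gamma(a)\|_{W_\mu}^2=\|\Lambda_a\|_{W_\mu}^2\le C^2 M_\mu^{2}\,\|a\|_\infty^{2}.
\]
Thus the $L^2(H^m(\Omega),\eta;W_\mu)$ integrability of $\Gamma$ reduces to checking that $a\mapsto\|a\|_\infty^2$ is $\eta$-integrable on $H^m(\Omega)$.

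Here the hypothesis $m>\tfrac{d}{2}$ enters decisively through the Sobolev embedding $H^m(\Omega)\hookrightarrow C(\overline\Omega)\hookrightarrow L^\infty(\Omega)$, valid for any smooth bounded $\Omega\subset\R^d$, which yields a constant $C_S>0$ with $\|a\|_\infty\le C_S\|a\|_{H^m(\Omega)}$ for every $a\in H^m(\Omega)$. Consequently,
\[
\int_{H^m(\Omega)}\|\Gamma(a)\|_{W_\mu}^2\,\eta(da)\le C^2 M_\mu^{2} C_S^{2}\int_{H^m(\Omega)}\|a\|_{H^m(\Omega)}^2\,\eta(da)<\infty,
\]
where finiteness of the last integral is exactly the finite-second-moment assumption on $\eta$.

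The remaining technical point is measurability of $\Gamma\colon H^m(\Omega)\to W_\mu$, which I would address briefly before the bound: on $X(\Omega)\cap H^m(\Omega)$, the continuity of $a\mapsto\Lambda_a$ with respect to the $L^\infty$ (hence $H^m$) topology follows by writing $\Lambda_{a_1}f-\Lambda_{a_2}f$ as a bilinear form in $(a_1-a_2)$ and $\nabla u_1,\nabla v$, together with the weak continuity of solutions to \eqref{weak-fomulation1} under $L^\infty$ perturbations of $a$; together with the fact that $X(\Omega)\cap H^m(\Omega)$ is a Borel subset of $H^m(\Omega)$, one gets Borel measurability, after which the extension by zero is clearly measurable. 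The main conceptual obstacle is precisely this passage through Sobolev embedding: without $m>d/2$ one only controls an $L^2$ norm of $a$, and the operator bound $\|\Lambda_a\|_{op}\le C\|a\|_\infty$ would not be usable against the probability measure $\eta$, which is why the dimension-dependent regularity threshold is essential.
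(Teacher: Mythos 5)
Your proof is correct and follows essentially the same route as the paper: split off the set where $\Gamma$ vanishes, bound $\|\Lambda_a\|_{W_\mu}^2$ by $C\|a\|_\infty^2\bigl(\int\|f\|_{1/2}^2\,\mu(df)\bigr)^2$ via \eqref{Lambda-f-bound}, then convert $\|a\|_\infty$ to $\|a\|_{H^m}$ by the Sobolev embedding $H^m\hookrightarrow L^\infty$ for $m>d/2$, and close with the finite second moment of $\eta$. The only addition is your brief remark on Borel measurability of $\Gamma$, which the paper leaves implicit; that is a reasonable point to flag, though not strictly required here.
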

\begin{proof}
   We prove that
    \be\label{eq:L2inequality}
      \int_{H^m(\Omega)} \norm{\Gamma(a)}^2_{W_{\mu}}\eta(da)<+\infty.
    \ee
    Indeed, 
    \begin{align*}
    	&\int_{H^m(\Omega)\cap X(\Omega)}\int_{H^{1/2}(\partial\Omega)}\int_{H^{1/2}(\partial\Omega)} |\prom{\Lambda_af,g}|^2 (\mu\otimes\mu)(df,dg)\eta(da) \\
    	&\le C\parent{\int_{H^{m}} \norm{a}_{\infty}^2 \eta(da)} \parent{\int_{H^{1/2}(\partial\Omega)} \norm{f}_{1/2}^2 \mu(df) }^2\\
    	&\le C\parent{\int_{H^{m}(\Omega)} \norm{a}_{H^{m}(\Omega)}^2 \eta(da)} \parent{\int_{H^{1/2}(\partial\Omega)} \norm{f}_{1/2}^2 \mu(df) }^2 < \infty,
    \end{align*}
    where we have, once again, used estimate \eqref{Lambda-f-bound} and Sobolev embedding given that $m>\frac{d}{2}$.
\end{proof}
}

\begin{remark}
Lemma \ref{prop:L2mapping} reveals that for any suitable measure $\eta$, the linear norm of the nonlinear Calder\'on{\color{black}'s} mapping $a \mapsto \Lambda_a$ is square integrable; this property is key to the proof of approximation via DeepONets.
\end{remark}

\begin{remark}
This is the unique location in the proof of Theorem \ref{MT2} that one uses more regularity on the conductivity $a$. This is needed to place the conductivity in a Hilbert space context compatible in $L^\infty(\Omega)$. We believe that this condition can be weakened provided better integrability estimates on the nonlinear Calder\'on's mapping are obtained.
\end{remark}

\subsection{Separability and choice of orthonormal basis}

{\color{black}Now, we turn to the question whether is possible to find an orthonormal basis in the complicated space
\[
W_{\mu}=L^2(H^{1/2}(\partial\Omega)\times H^{1/2}(\partial\Omega),\mu\otimes\mu;\R).
\] 
First, note that the push-forward measure $\eta\circ\Lambda^{-1}$ in $W_{\mu}$ has finite second moment. This follows from elementary properties of the push-forward of a measure, estimate \eqref{Lambda-f-bound} and Sobolev's embedding:
	\begin{align*}
		\int_{W_{\mu}} \norm{y}_{W_{\mu}}^2 (\eta\circ\Lambda^{-1})(dy)\le C\parent{\int_{H^m(\Omega)}\norm{a}^2_{H^{m}(\Omega)}\eta(da)}\parent{ \int_{H^{1/2}(\partial\Omega)} \norm{f}_{1/2}^2\mu(df)} < \infty.
	\end{align*}
Recall that both $\mu$ and $\eta$ have finite second moment in their respective spaces. Now, in order to obtain a basis for the Hilbert space at hand, we use the fact that since $\eta\circ\Lambda^{-1}$ is a finite second moment probability measure, there exists a self-adjoint, non-negative, trace class operator $C\colon W_{\mu}\to W_{\mu}$. Finally, define the basis in $W_{\mu}$ as the (orthonormal) eigenvectors of such operator.	
}

{\color{black}
	
	\subsection{Extension of the inverse Calderón's mapping}\label{ssec:6.4}
	For $M \in (0,\infty)$ recall $Y_M(\Omega)$ as stated in previous Section and $W_{\mu}$ as in \eqref{W_mu}.
	
	\medskip
	{\color{black} First of all, recall the inverse operator $\Sigma_M$ defined in \eqref{eq:inverse_operator}. Additionally, the space $Y_M(\Omega)$ is a subset of $L^{2}(\Omega)$, and then, $\Sigma_M$ can be seen as an operator from $\Lambda(Y_M(\Omega))$ into $L^{2}(\Omega)$.
		
		\medskip
		Our main objective is to have a characterization of the operator $\Sigma_M$ between two Hilbert spaces. To do so, we will provide two different approaches for the extension of $\Sigma_M$. A first insight is to extend $\Sigma_M$ into $L^2(H^{1/2}(\partial \Omega),\mu;H^{-1/2}(\partial \Omega))$, which is a Hilbert space that contains $\mathcal L(H^{1/2}(\partial \Omega),H^{-1/2}(\partial \Omega))$. The main problem is that we do not know if $L^2(H^{1/2}(\partial \Omega),\mu;H^{-1/2}(\partial \Omega))$ is a separable Hilbert space. So, for this approach we need an additional assumption on the measure $\mu$.
		
		\begin{assumptions}\label{sup:1}
			There exists a finite measure $\mu$ over $H^{1/2}(\partial \Omega)$, with finite second moment, such that the space $V_{\mu} := L^2(H^{1/2}(\partial \Omega),\mu;H^{-1/2}(\partial \Omega))$ is a separable Hilbert space.
		\end{assumptions}
		
		
		Then, for the first approach, the following Lemma can be stated:
		
		\begin{lemma}\label{lemma:L2_inv_sup1}
			Suppose that $\mu$ is a finite measure on $H^{1/2}(\partial \Omega)$ such that satisfies assumption \ref{sup:1}. Let $\nu$ be a probability measure on $V_{\mu} = L^2(H^{1/2}(\partial \Omega),\mu;H^{-1/2}(\partial\Omega))$ with finite second moment and let $M \in (0,\infty)$. Then the operator $\widetilde{\Sigma}_M$, defined as
			\be\label{eq:ext_invmap_sup1}
			\begin{aligned}
				\widetilde{\Sigma}_M: V_{\mu} \, &\longrightarrow \quad L^{2}(\Omega)\\
				T\quad  &\longmapsto \quad  \widetilde{\Sigma}_M(T) := \begin{cases}
					\Sigma_M(T)  & T \in \Lambda(Y_M(\Omega)),\\
					0  & T \notin \Lambda(Y_M(\Omega)),
				\end{cases}
			\end{aligned}
			\ee
			is a $L^2\Big(V_{\mu},\nu;L^2(\Omega)\Big)$ operator.
		\end{lemma}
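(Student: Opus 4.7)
The argument naturally splits into two parts: a uniform pointwise bound on $\widetilde{\Sigma}_M$ that comes essentially for free from the $L^{\infty}$ cap built into the definition of $Y_M(\Omega)$, and a Borel measurability verification that crucially uses Assumption \ref{sup:1}. With both in hand, the probability measure structure of $\nu$ closes the $L^2(\nu;L^2(\Omega))$ estimate immediately.

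\textbf{Integrability bound.} If $T\in \Lambda(Y_M(\Omega))$, then $\widetilde{\Sigma}_M(T)=a$ is the unique $a\in Y_M(\Omega)$ with $\Lambda_a=T$, and by definition of $Y_M(\Omega)$ one has $1/M\leq a\leq M$ a.e.\ on $\Omega$. Since $|\Omega|<\infty$, this gives
\[
\|\widetilde{\Sigma}_M(T)\|_{L^2(\Omega)}^2 \;=\; \|a\|_{L^2(\Omega)}^2 \;\leq\; M^2\,|\Omega|,
\]
and on the complement $V_\mu\setminus \Lambda(Y_M(\Omega))$ the same bound is trivial since $\widetilde{\Sigma}_M(T)=0$ there. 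Integrating against the probability measure $\nu$ then yields
\[
\int_{V_\mu} \|\widetilde{\Sigma}_M(T)\|_{L^2(\Omega)}^2\,\nu(dT) \;\leq\; M^2\,|\Omega| \;<\;\infty,
\]
which is the desired estimate, provided measurability is granted.

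\textbf{Measurability.} Borel measurability of $\widetilde{\Sigma}_M\colon V_\mu\to L^2(\Omega)$ is verified by a standard descriptive-set-theoretic argument. By Assumption \ref{sup:1}, $V_\mu$ is a separable Hilbert space and therefore Polish. The direct mapping $\Lambda$ is Lipschitz continuous in $L^\infty$ norm by Lemma \ref{lemma:Lambda-linear-bounded} and injective on $Y_M(\Omega)$ by the classical uniqueness theorems for Calder\'on's problem (\cite{AP} in dimension $d=2$; \cite{HT,CR} in dimensions $d>2$). One then exhausts $Y_M(\Omega)$ by $L^2(\Omega)$-compact pieces: in $d>2$ by the $W^{1,\infty}$-balls $K_n:=Y_M(\Omega)\cap\{\|\nabla a\|_\infty\leq n\}$, which are compact in $L^2(\Omega)$ via Arzel\`a--Ascoli and on which the $L^\infty$ and $L^2$ topologies coincide; in $d=2$ via an analogous approximation using that $Y_M(\Omega)$ is weakly-$*$ compact in $L^\infty$ together with the Polish target. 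Applying the Lusin--Souslin theorem inside $V_\mu$ piece by piece shows that each $\Lambda(K_n)$ is Borel in $V_\mu$ and that the inverse $(\Lambda|_{K_n})^{-1}$ is Borel measurable. A countable union then gives that $\Lambda(Y_M(\Omega))$ is $F_\sigma$ (hence Borel) in $V_\mu$ and $\Sigma_M$ is Borel measurable on it. Extending by $0$ off $\Lambda(Y_M(\Omega))$ preserves Borel measurability of $\widetilde{\Sigma}_M$.

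\textbf{Main obstacle.} The integrability estimate is essentially free thanks to the $L^\infty$ ceiling built into $Y_M(\Omega)$, so the hard part is the measurability verification. The well-known ill-posedness of the inverse Calder\'on problem makes $\Sigma_M$ genuinely singular, and measurability requires enough descriptive-set-theoretic regularity on both source and target spaces. This is precisely where Assumption \ref{sup:1} enters: without separability of $V_\mu$, the space fails to be Polish and the Lusin--Souslin machinery is unavailable, making the measurability step significantly more delicate.
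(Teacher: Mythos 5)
Your integrability estimate is the same argument the paper uses: every $a_T=\Sigma_M(T)\in Y_M(\Omega)$ satisfies $1/M\le a_T\le M$ a.e., so $\|\widetilde\Sigma_M(T)\|_{L^2(\Omega)}^2\le M^2|\Omega|$ uniformly, and the integral over the probability measure $\nu$ is finite. (You are in fact slightly more careful than the paper, which drops the $|\Omega|$ factor.) The paper's proof stops there.

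Where you diverge is the measurability discussion, which the paper does not address at all. Strictly speaking this is a real point: membership in $L^2(V_\mu,\nu;L^2(\Omega))$ requires $\widetilde\Sigma_M$ to be $\nu$-measurable, which in turn requires $\Lambda(Y_M(\Omega))$ to be a measurable subset of $V_\mu$ and $\Sigma_M$ to be a measurable map on it, and neither is automatic for a highly ill-posed inverse map on an infinite-dimensional Polish space. Your Lusin--Souslin strategy for $d>2$ is sound: the Lipschitz balls $K_n=Y_M(\Omega)\cap\{\|\nabla a\|_\infty\le n\}$ are $L^2$-compact (Arzel\`a--Ascoli), $L^\infty$ and $L^2$ topologies coincide on them, $\Lambda|_{K_n}$ is continuous and injective, and $V_\mu$ is Polish by Assumption \ref{sup:1}, so $\Lambda(K_n)$ is Borel and $(\Lambda|_{K_n})^{-1}$ is Borel measurable; the $F_\sigma$ union closes the argument. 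This is a genuine and useful supplement that also makes transparent exactly where separability of $V_\mu$ is used.

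However, your $d=2$ case does not work as sketched. First, $Y_M(\Omega)$ with only an $L^\infty$ cap is not $\sigma$-compact in $L^2(\Omega)$ (highly oscillatory conductivities such as $a_n(x)=c_1+c_2\sin(nx_1)$ stay in $Y_M$ but have no $L^2$-convergent subsequence), so there is no exhaustion by $L^2$-compacta. Second, passing to the weak-$*$ topology on $L^\infty$ does make $Y_M(\Omega)$ compact and metrizable, but $a\mapsto\Lambda_a$ is \emph{not} weak-$*$ continuous: this is the classical $G$-convergence/homogenization phenomenon, where weak-$*$ limits of conductivities do not reproduce the limiting Dirichlet-to-Neumann map. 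So the Lusin--Souslin theorem cannot be applied with that topology, and the $d=2$ measurability claim remains open in your write-up. You would need a different device there (e.g., regularize the source class as in $d>2$, or invoke an analytic-set/measurable-selection argument that does not rely on continuity of $\Lambda$ in the chosen topology).
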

		\begin{proof}
			Let $M\in(0,\infty)$. We want to prove that
			\[
			\int_{V_{\mu}} \norm{\widetilde{\Sigma}_M(T)}_{L^2(\Omega)}^2 \nu(dT) < \infty.
			\]
			From the definition of $\widetilde{\Sigma}_M$ in \eqref{eq:ext_invmap_sup1} we have that 
			\[
			\int_{V_{\mu}} \norm{\widetilde{\Sigma}_M(T)}_{L^2(\Omega)}^2 \nu(dT) = \int_{\Lambda(Y_M(\Omega))} \norm{\Sigma_M(T)}^2_{L^2(\Omega)} \nu(dT).
			\]
			Notice that for every $T \in \Lambda(Y_M(\Omega))$, there exists an unique $a_T \in Y_M(\Omega)$ such that $\Sigma_M(T)=a_T$. Therefore
			\[
			\begin{aligned}
				\int_{\Lambda(Y_M(\Omega))} \norm{\Sigma_M(T)}^2_{L^2(\Omega)} \nu(dT) &= \int_{\Lambda(Y_M(\Omega))} \norm{a_T}_{L^2(\Omega)}^2 \nu(dT)\\
				&\leq M^2 \nu\Big(\Lambda(Y_M(\Omega))\Big).
			\end{aligned}
			\] 
			Recall that $\nu$ is a probability measure. Then
			\[
			\int_{V_{\mu}} \norm{\widetilde{\Sigma}_M(T)}^2_{L^2(\Omega)} \nu(dT) \leq M < \infty.
			\]
			Concluding that $\widetilde{\Sigma}_M$ is a $L^2(V_{\mu},\nu;L^2(\Omega))$ operator for all $M \in (0,\infty)$.
		\end{proof}
		
		\begin{remark}
			As we do not know if assumption \ref{sup:1} is non empty, we will propose a second approach. If there exists that measure $\mu$, from this point we will use that measure for the rest of the paper. 
		\end{remark}
		
		For the second extension proposed, we consider the space of bilinear continuous operators from $H^{1/2}(\partial \Omega) \times H^{1/2}(\partial \Omega)$ into $\R$, namely $\mathcal{B}(H^{1/2}(\partial \Omega)\times H^{1/2}(\partial\Omega),\R)$. Notice that for any bilinear continuous form $B \in \mathcal{B}(H^{1/2}(\partial \Omega)\times H^{1/2}(\partial \Omega),\R)$, we can define a representation of $B$ in the space of continuous linear operators from $H^{1/2}(\partial \Omega)$ into $H^{-1/2}(\partial \Omega)$, in other words, for any $B \in \mathcal{B}(H^{1/2}(\partial \Omega)\times H^{1/2}(\partial \Omega),\R)$ we can define $T_B$ as follows
		\[
		\begin{matrix}
			T_B : & H^{1/2}(\partial \Omega) &\longrightarrow &H^{-1/2}(\partial \Omega)  \\
			& f & \longmapsto & T_B(f) = B(f,\cdot).
		\end{matrix}
		\]
		
		\begin{remark}
			The representation of $B$ in the space $\mathcal{L}(H^{1/2}(\partial \Omega),H^{-1/2}(\partial \Omega))$ is not unique, but we only need one of them, and we will work with the representation defined as above.
		\end{remark}

		Now let $D$ be the following subset
		\[
		D := \left\{ B \in \mathcal B(H^{1/2}(\partial\Omega) \times H^{1/2}(\partial\Omega),\R) : \> \exists a \in Y_M(\Omega), \Lambda_a \in \Lambda(Y_M(\Omega)) \hbox{ s.t. } T_B \equiv \Lambda_a \right\}.
		\]
		
		In other words, $D$ is the set of all the bilinear continuous operators such that their representation on $\mathcal L (H^{1/2}(\partial \Omega),H^{-1/2}(\partial \Omega))$ is exactly a Dirichlet-to-Neumann operator. The subset $D$ allow us to well-define the functional
		
		\[
		\begin{matrix}
			\chi : & D &\longrightarrow & \Lambda(Y_M(\Omega))  \\
			& B & \longmapsto & \chi(B) := T_B.
		\end{matrix}
		\]
		
		\begin{remark}
			Recall that $\chi$ is the functional such that for any bilinear continuous operator $B$ on $D$, $\chi(B)$ gives the Dirichlet-to-Neumann operator associated.
		\end{remark}
		
		For all $M \in (0,+\infty)$, the composition of $\Sigma_M$ with $\chi$ give us an operator such that represents the inverse Calderón's mapping between the spaces $D$ and $L^2(\Omega)$, and then we can extend it to $W_{\mu}$ from the fact that
		\[
		D \subset \mathcal{B}(H^{1/2}(\partial\Omega) \times H^{1/2}(\partial \Omega),\R) \subset W_{\mu}.
		\]
		In resume, we define the operator
		\[
		\begin{matrix}
			\pi_M : & D &\longrightarrow & L^2(\Omega)  \\
			& B & \longmapsto & \pi_M(B) := \Sigma_M \circ \chi (B).
		\end{matrix}
		\]
		And this operator can be extended in the following way
		\begin{equation}\label{eq:inverse_operator_BL}
			\begin{aligned}
				\widetilde{\pi}_M: W_{\mu} \, &\longrightarrow \quad L^{2}(\Omega)\\
				B \quad  &\longmapsto \quad  \widetilde{\pi}_M(B) := \begin{cases}
					\pi_M(B)  & B \in D \cap W_{\mu},\\
					0  & B \notin D \cap W_{\mu}.
				\end{cases}
			\end{aligned}
		\end{equation}
		\begin{remark}
			$\widetilde{\pi}_M$ is an operator such that, if $B$ is a bilinear continuous map that has a linear representation which is a Dirichlet-to-Neumann map $\Lambda_{a_B}$ for some $a_B \in Y_M(\Omega)$, then $\widetilde{\pi}_M(B) = a_B$. Therefore $\widetilde{\pi}_M$ is indeed a characterization of the inverse Calderón's mapping on the space $W_{\mu}$. 
		\end{remark}
		The next Lemma can be stated for $\widetilde{\pi}_M$, for any $M \in (0,+\infty)$:
		\begin{lemma}\label{lemma:L2inv_BL}
			Let $\nu$ be a probability measure on $W_{\mu}$ with finite second moment and let $M \in (0,\infty)$. Then $\widetilde{\pi}_M$, defined as in \eqref{eq:inverse_operator_BL}, is an $L^2(W_{\mu},\nu;L^2(\Omega))$ operator.
		\end{lemma}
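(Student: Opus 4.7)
The plan is to mirror, step for step, the argument used to prove Lemma \ref{lemma:L2_inv_sup1}. The crucial observation is that, by construction, the image of $\widetilde{\pi}_M$ is contained in $Y_M(\Omega) \cup \{0\}$, and every element of $Y_M(\Omega)$ is uniformly bounded in $L^\infty(\Omega)$ by $M$. Hence no functional-analytic machinery beyond the definitions themselves, plus the boundedness of $\Omega$, should be needed.

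First, I would split the integration domain according to the two cases in \eqref{eq:inverse_operator_BL}. On $W_\mu \setminus (D \cap W_\mu)$, the integrand $\norm{\widetilde{\pi}_M(B)}_{L^2(\Omega)}^2$ vanishes, so the only contribution comes from $B \in D \cap W_\mu$. By the definition of $D$, for such $B$ there exists a unique $a_B \in Y_M(\Omega)$ whose Dirichlet-to-Neumann mapping is exactly the linear representation $T_B = \chi(B)$; hence $\pi_M(B) = \Sigma_M(\chi(B)) = a_B$.

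Next, I would invoke the pointwise bound $\tfrac{1}{M} \leq a_B \leq M$ a.e.\ on $\Omega$, which is built directly into the definition of $Y_M(\Omega)$, together with the boundedness of $\Omega$, to obtain
$$\norm{\widetilde{\pi}_M(B)}_{L^2(\Omega)}^2 = \norm{a_B}_{L^2(\Omega)}^2 \leq M^2 |\Omega|.$$
Integrating against $\nu$ and using that $\nu$ is a probability measure, I would conclude
$$\int_{W_\mu} \norm{\widetilde{\pi}_M(B)}_{L^2(\Omega)}^2 \, \nu(dB) \leq M^2 |\Omega| \, \nu(D \cap W_\mu) \leq M^2 |\Omega| < \infty,$$
which is exactly the claim.

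The main (and essentially only) obstacle is a measurability bookkeeping issue: one needs $\widetilde{\pi}_M$ to be a measurable function from $W_\mu$ to $L^2(\Omega)$ for the above integral to make sense. Equivalently, one must check that $D \cap W_\mu$ is a measurable subset of $W_\mu$ and that the restriction $\pi_M = \Sigma_M \circ \chi$ is measurable on it; this follows from the fact that $\chi$ is linear and bounded on its natural domain and that the injectivity of $\widetilde{\Lambda}_M$ on $Y_M(\Omega)$ makes $\Sigma_M$ a well-defined map, together with standard Borel arguments. Notice in passing that, exactly as in Lemma \ref{lemma:L2_inv_sup1}, the finite second moment hypothesis on $\nu$ is not actually exploited, since the $L^\infty$-bound intrinsic to $Y_M(\Omega)$ already yields the required integrability for free.
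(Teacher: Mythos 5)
Your proof is correct and follows essentially the same route as the paper: split the integral according to the two branches of \eqref{eq:inverse_operator_BL}, use that $\pi_M(B)=a_B\in Y_M(\Omega)$ on $D$, bound $\|a_B\|_{L^2(\Omega)}$ by the $L^\infty$-bound $M$ together with $|\Omega|<\infty$, and conclude using that $\nu$ is a probability measure. One small point in your favor: the paper's displayed estimate silently drops the factor $|\Omega|$ (writing $\|a_B\|_{L^2(\Omega)}^2\le M^2$ rather than $M^2|\Omega|$), which your version keeps correctly; you also flag the measurability check and the fact that the finite-second-moment hypothesis is not actually used, both of which the paper leaves implicit.
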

		
		\begin{proof}
			We want to prove that
			\[
			\int_{W_{\mu}} \norm{\widetilde{\pi}_M(B)}^{2}_{L^{2}(\Omega)} \nu(dB) < \infty.
			\]
			From the definition of $\widetilde{\pi}_M$ in \eqref{eq:inverse_operator_BL} we have that
			\[
			\int_{W_{\mu}} \norm{\widetilde{\pi}_M(B)}^{2}_{L^{2}(\Omega)} \nu(dB) = \int_D \norm{\pi_M(B)}^{2}_{L^{2}(\Omega)}\nu(dB).
			\]
			Notice that the results on $D$ and $\pi_M$, implies that for any $B \in D$ there exists $a_B \in Y_M(\Omega)$ and $\Lambda_{a_B} \in \Lambda(Y_M(\Omega))$ such that 
			\[
			\pi_M(B) = \Sigma_M \circ \chi(B) = \Sigma_M(\Lambda_{a_B}),		
			\]
			and recall that $\Sigma_M$ is the inverse operator of $\Lambda_{\cdot}$, in the space $Y_M(\Omega)$. Then $\Sigma_M(\Lambda_{a_B}) = a_B$. Now, $a_B \in Y_M(\Omega)$ implies that $a_B \leq M$, therefore
			\[
			\begin{aligned}
				\int_D \norm{\pi_M(B)}^{2}_{L^{2}(\Omega)}\nu(dB) &= \int_{D} \norm{a_B}_{L^2(\Omega)}^2 \nu(dB) \\
				&\leq M^2 \nu(D).
			\end{aligned}
			\]
			From the fact that $\nu$ is a probability measure on $W_{\mu}$, it follows that
			\[
			\int_{W_{\mu}} \norm{\widetilde{\pi}_M(B)}^{2}_{L^{2}(\Omega)} \nu(dB) \leq M^2 < \infty.
			\]
			Concluding that $\widetilde{\pi}_M$ is a $L^2(W_{\mu},\nu;L^2(\Omega))$ operator for all $M \in (0,\infty)$.
		\end{proof}
		
		\begin{remark}
			Lemma \ref{lemma:L2inv_BL} can be stated without Assumption \ref{sup:1}. This implies that we only need that $\mu$ is a finite measure on $H^{1/2}(\partial \Omega)$ with finite second moment.
		\end{remark}
	}	
}

\section{Description of required DeepONets}\label{Sect:4}

\subsection{Extension to measurable mappings in Hilbert spaces}

The previous and original conception for DeepONets is placed in the general Banach setting. Sometimes one needs less generality but more precision in the required approximations, for instance, being placed in Hilbert spaces of (not necessarily) continuous mappings. This is the case of the Calder\'on's problem, that we will see can be nicely put in a Hilbert space context. We first state the functional setting required for this construction.

\medskip

{\it
\noindent
{\bf General setting}. In our case, we will seek for a general Deep Learning framework that approximates nonlinear mappings $F\colon H\to W$, with $H$ and $W$ general separable Hilbert spaces suitable for the Calder\'on's problem. 
}

\medskip

There is an easy solution to this problem: just provide suitable finite dimensional approximation spaces for $H$ and $W$, and then project and extend the input to obtain a desired output. This is usually the case in Linear Algebra applications. However, since we are dealing with mappings with deep encoded properties, and we require methods that efficiently approximate functions (such as standard DNNs), specially in higher dimensions, this approach fails to provide all the required characteristics and conditions of our problem. 

\medskip

Approximating $F$ by a neural network $F^{\theta}\colon H\to W$, where $\theta$ is a finite dimensional parameter, is precisely the main objective of Hilbert-valued DeepONets. One can compare these ideas with the approximation of measurable functions with simple functions in integral-type distance and continuous functions with polynomials in uniform norm.

\medskip

In a previous work by one of us \cite{Cas22}, by using ideas from \cite{LMK21} we extend the previous property from \cite[Lemma $7$]{CC95} to any compact set $V$ in a general Hilbert space $H$. Here, the considered transformation is the projection onto a finite dimensional subspace of $H$. It was proved a general result that can be rephrased as follows:

\begin{theorem}\label{vago}
	 Any $L^2(H,\mu;W)$ mapping $G$ (with $H$ endowed with any finite measure $\mu$ with finite second momentum) can be approximated by a DeepONet $F^{\theta}: H\to W$, $F^{\theta} \in L^2(H,\mu;W)$.
\end{theorem}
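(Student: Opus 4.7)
The plan is to reduce the infinite-dimensional approximation problem to a finite-dimensional continuous approximation problem, to which the classical universal approximation theorem for neural networks applies. Let $(e_n)_n$ and $(g_n)_n$ be the fixed orthonormal bases of $H$ and $W$, and write $Q_d := E_{H,d} \circ P_{H,d}$ and $R_m := E_{W,m} \circ P_{W,m}$ for the associated truncation operators. Since the DeepONet has the form $F^{\theta} = E_{W,m} \circ f^{\theta} \circ P_{H,d}$, the error $\|G - F^{\theta}\|_{L^2(H,\mu;W)}$ is to be controlled via a telescoping argument through four successive approximations: replacing $G$ by a continuous $G_c$; truncating its output through $R_m$; precomposing with the input truncation $Q_d$; and finally approximating the resulting finite-dimensional continuous map by a standard DNN.

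First, I would use that $\mu$ is a finite Borel measure on the Polish space $H$, hence Radon. Given $\varepsilon>0$, pick a compact $K\subset H$ such that both $\mu(H\setminus K)$ and $\int_{H\setminus K}\|G\|_W^2\,d\mu$ are arbitrarily small; this is possible because $G\in L^2(H,\mu;W)$ and $\mu$ has finite second moment. Next, since continuous bounded mappings $H\to W$ are dense in $L^2(H,\mu;W)$, approximate $G$ by such a $G_c$ with $\|G-G_c\|_{L^2(\mu;W)}<\varepsilon$. Now choose $m$ large enough that $\|R_m G_c - G_c\|_{L^2(\mu;W)}<\varepsilon$; this is valid by dominated convergence, since $R_m w\to w$ in $W$ for every $w$ while $\|R_m w\|_W\le\|w\|_W$ uniformly in $m$.

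The more delicate step is the input truncation. Invoking Lemma \ref{lemma:aris}, for the compact set $K$ one selects $d$ large enough that $\sup_{u\in K}\|Q_d u - u\|_H$ is arbitrarily small. Uniform continuity of $G_c$ on a bounded enlargement of $K\cup\bigcup_{d'\ge d}Q_{d'}(K)$ (which is totally bounded) then yields uniform convergence $G_c(Q_d u)\to G_c(u)$ on $K$, so that $\|R_m G_c\circ Q_d - R_m G_c\|_{L^2(K,\mu;W)}<\varepsilon$; the contribution outside $K$ is already controlled by the truncation step. At this point the composite $R_m G_c\circ Q_d$ factors as $E_{W,m}\circ \varphi\circ P_{H,d}$ for the continuous map $\varphi := P_{W,m}\circ G_c\circ E_{H,d}:\R^d\to\R^m$. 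The classical universal approximation theorem for non-polynomial $\sigma$ then produces a realization $f^{\theta}$ of a DNN that approximates $\varphi$ uniformly on the compact set $P_{H,d}(K)\subset\R^d$. Using that $E_{W,m}$ is an isometry onto its range and that $\|P_{H,d}\|_{op}\le 1$, integrating over $K$ transfers the uniform bound into an $L^2(\mu;W)$ bound on the portion of $H$ that matters.

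The main obstacle will be ensuring that the resulting DeepONet $F^{\theta}$ remains in $L^2(H,\mu;W)$ globally, not just on $K$. For generic non-polynomial activations the realization $f^{\theta}$ may grow polynomially or even exponentially at infinity, while $\mu$ is only assumed to have finite second moment. The resolution is to choose $\sigma$ bounded (e.g. sigmoidal) so that $f^{\theta}$ is globally bounded and the $L^2$ contribution off $K$ is controlled by $\mu(H\setminus K)$; alternatively, one may append a final clipping layer to enforce at most linear growth, compatible with the second-moment hypothesis on $\mu$. Once this uniform integrability is secured, the triangle inequality applied to the four successive approximations yields a DeepONet $F^{\theta}\in L^2(H,\mu;W)$ with $\|G-F^{\theta}\|_{L^2(H,\mu;W)}<\varepsilon$, as required.
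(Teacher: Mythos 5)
Your proof is correct and follows the same core strategy as the paper's Theorem \ref{theorem:infiniteapprox}: reduce to a compact set via regularity of $\mu$, project input and output into finite dimensions, invoke a finite-dimensional universal approximation theorem, and handle integrability off the compact set by clipping or a bounded activation. The differences are organizational rather than substantive. Where you use density of continuous bounded maps in $L^{2}(H,\mu;W)$, the paper first truncates the values of $G$ (so it is essentially bounded) and then applies Lusin's theorem to find the compact $K$ on which $G$ is continuous --- these are two packagings of the same idea. Where you re-derive, via Lemma \ref{lemma:aris} and uniform continuity of $G_c$ on the totally bounded set $K\cup\bigcup_{d'\ge d}Q_{d'}(K)$, the uniform estimate $\sup_{u\in K}\|G_c(Q_d u)-G_c(u)\|_W\to 0$, the paper invokes Chen--Chen's Theorem~4 from \cite{CC95} (whose engine is exactly the equicontinuity argument you sketch, stated in the paper as Lemma~\ref{L3p1}) as a black box to produce the finite-dimensional DeepONet in one shot. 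Your output truncation via dominated convergence in $L^{2}(\mu;W)$ is in fact slightly simpler than the paper's uniform-on-$K$ estimate \eqref{Proyeccioness} using compactness of $G(K)$, and both work. Finally, your remark on the off-$K$ integrability problem is well placed: the paper addresses it not by assuming a bounded activation but by composing with an explicit ReLU ``clipping'' network $\theta_2\in\ca{N}_{\sigma,5,\kappa,\kappa}$ satisfying \eqref{eq:clipping-gamma}, which forces $\|f^{\theta_2}\|\le 2M$ globally; this is the same device as your fallback alternative, and it is what yields the explicit depth count $L=7$ quoted in the paper. In short, you have independently reconstructed the Chen--Chen machinery that the paper imports by citation, while the remainder of your decomposition matches Steps~2--5 of the paper's proof.
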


For a precise description of this result, see Theorem \ref{theorem:infiniteapprox} or \cite{Cas22}. The parameter $\theta$ is finite dimensional, and essentially represents three independent objects: a finite dimensional projection space of dimension $\tilde d$, a finite DNN of parameter $\theta_{fin}$, and finally an extension parameter $m$ representing a finite dimensional subspace of $W$. Theorem \ref{vago} has impressive consequences, being the most important a clear approximation scheme for the Calder\'on's mapping, not only for fixed conductivity $a$, but also in the much general setting where $a$ ranges on $L^\infty$ (plus some additional assumptions stated in Section \ref{Sect:1b}). As a consequence, we advance that the conductivity $a$ will be part of a DNN scheme based in the infinite-dimensional DeepONet machinery (and not only the standard DNN procedure applied to Inverse problems).

\subsection{How to measure errors in infinite dimensions. The probabilistic approach}

{\color{black} Clearly the objects arising from the Calder\'on's problem are nonlinear 
mappings $F$ between infinite dimensional spaces. The typical starting point here is a mapping $F\colon H\to W$ enjoying some property related to the Calder\'on's problem and a suitable class of functions (model) $\set{F^{\theta}\colon H\to W}_{\theta\in\R^{\kappa}}$ for some $\kappa\in\bb{N}$. The aim is to minimize the mean square error (MSE)
\begin{align}\label{error-measure}
	\bb{E}_{x\sim\mu}\norm{F(x)-F^{\theta}(x)}_{W}^2 = \int_{H} \norm{F(x)-F^{\theta}(x)}_{W}^2 \mu(dx),
\end{align}
where $\mu$ is a borelian probability measure on $H$. This is a straightforward extension of what can be found in the machine learning literature where one has access to a data set $\set{(x_i,y_i)}_{i=1}^N$, such that $F(x_i) = y_i$ for all $i=1,...,N$. By defining
\begin{align*}
	\mu = \frac{1}{N}\sum_{i=1}^N \delta_{x_i},
\end{align*}
one arrives at the typical MSE as presented in the machine learning literature. Later on, we will require $\mu$ to have finite second moment, and a natural question to ask is whether or not such a measure exists}. Indeed, one has the following: 

\begin{lemma}\label{Measure}
	For any separable Hilbert space $H$ there exists at least one probability measure $\mu$ defined on $H$ with finite second moment.
\end{lemma}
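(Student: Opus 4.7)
The statement is quite elementary and admits several proofs; my plan is to exhibit an explicit probability measure and verify the moment condition. The trivial choice $\mu = \delta_{x_0}$ for any fixed $x_0 \in H$ (say $x_0 = 0$) gives $\int_H \|x\|_H^2\, d\mu(x) = \|x_0\|_H^2 < +\infty$, which already settles the lemma. However, since Gaussian measures play a natural role in the DeepONet construction (in particular, for the choice of the orthonormal basis of $W_{\mu}$ via the eigenvectors of a trace-class covariance operator, as used immediately before this lemma in the paper), I would prefer a construction that exhibits such a measure.

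First I would use the separability of $H$ to fix an orthonormal basis $(e_n)_{n\in\mathbb{N}}$, and then pick any summable sequence of strictly positive weights, e.g.\ $\lambda_n = 2^{-n}$, so that $\sum_{n} \lambda_n = 1 < +\infty$. I would then define the covariance operator $C : H \to H$ by $C e_n = \lambda_n e_n$, which is self-adjoint, non-negative, and trace class with $\operatorname{tr}(C) = \sum_n \lambda_n < +\infty$.

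Next, on some auxiliary probability space $(\Omega,\mathcal{F},\mathbb{P})$ take an i.i.d.\ sequence of standard normal random variables $(\xi_n)_{n\in\mathbb{N}}$ and set
\[
X_N := \sum_{n=1}^{N} \sqrt{\lambda_n}\,\xi_n\, e_n.
\]
Orthogonality of the basis and the Pythagorean identity yield, for $N \leq M$,
\[
\mathbb{E}\bigl\|X_M - X_N\bigr\|_H^2 \;=\; \sum_{n=N+1}^{M} \lambda_n \;\longrightarrow\; 0,
\]
so $(X_N)$ is Cauchy in $L^2(\Omega;H)$ and converges to a random variable $X \in H$ both in $L^2$ and $\mathbb{P}$-a.s. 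I would then define $\mu$ as the law of $X$ on $H$, i.e.\ the push-forward $\mathbb{P}\circ X^{-1}$, which is a (centered Gaussian) probability measure on $H$.

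Finally, the monotone convergence theorem, together with independence and $\mathbb{E}[\xi_n^2]=1$, yields
\[
\int_H \|x\|_H^2\, d\mu(x) \;=\; \mathbb{E}\bigl[\|X\|_H^2\bigr] \;=\; \sum_{n=1}^\infty \lambda_n \;=\; \operatorname{tr}(C) \;<\; +\infty,
\]
finishing the proof. I do not anticipate any serious obstacle: the only point requiring a minimum of care is the $L^2$-convergence of the random series defining $X$, which is immediate from the summability of $(\lambda_n)$ and the orthonormality of $(e_n)$.
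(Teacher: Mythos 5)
Your proof is correct, but it takes a genuinely different route from the paper's. The paper invokes the characteristic-function characterization of probability measures on Hilbert spaces (citing Parthasarathy, Ch.~VI): given a self-adjoint, non-negative, trace-class operator $S$ with $\sum_i \langle S e_i, e_i\rangle < \infty$, the function $\varphi(y) = \exp\left(-\tfrac12 \langle Sy, y\rangle\right)$ is the characteristic function of a probability measure $\mu$ on $H$, which then automatically has finite second moment equal to $\operatorname{tr}(S)$. This relies on a non-trivial theorem (Sazonov/Minlos type) about which positive-definite functions on $H$ are Fourier transforms of measures. You instead construct the same object directly: the centered Gaussian measure with covariance $C$ arises as the law of the random series $X = \sum_n \sqrt{\lambda_n}\,\xi_n e_n$, and the only analytical ingredient is $L^2$-Cauchyness of the partial sums, which follows immediately from orthonormality and $\sum_n \lambda_n < \infty$. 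Your route is more elementary and self-contained, trading the abstract characteristic-function theorem for a concrete probabilistic construction; the paper's route is shorter modulo the citation. Two remarks: (i) your opening observation that $\mu = \delta_{x_0}$ already settles the lemma is worth keeping, since it shows the statement is trivially true and that the Gaussian construction is a convenience rather than a necessity; (ii) the a.s.\ convergence of $X_N$ that you assert is not actually needed for the argument and, if you wish to keep it, would deserve a word about the Itô--Nisio theorem (or Lévy's theorem for series of independent symmetric Hilbert-valued summands), since $L^2$ convergence alone only gives a.s.\ convergence along a subsequence.
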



\begin{proof}[Sketch of proof] The interested reader can consult e.g. the monograph \cite{Par05}, Chapter VI. The construction goes as follows: let $S$ be any positive semidefinite Hermitian operator defined on $H$ with the property that for some orthonormal basis $\{e_i\}_i$ on $H$, $\sum_{i\geq 1} \langle Se_i, e_i\rangle < \infty$.
Then the function $\varphi(y) = \exp\left(-\frac{1}{2} \langle Sy,y\rangle\right)$, is the characteristic function of a probability measure $\mu$ on $H$. Moreover, $\mu$ satisfies
\begin{equation}\label{2do momento}
\int_{H} \norm{x}^2_{H} \mu(dx) < \infty.
\end{equation}
This ends the proof.
\end{proof}
%

\subsection{Approximation of infinite dimensional mappings via DeepONets}

Now we are ready to start the proof of Theorems \ref{MT1} and \ref{MT2}. As naturally expected, we first prove the approximation result for the Dirichlet-to-Neumann map, and then for the Calder\'on's mapping. The main ingredient for the proof will  the following universal approximation result.

{

\begin{theorem}
\label{theorem:infiniteapprox}
	 Let $G\colon H\to W$ be a $L^2(H,\mu;W)$ mapping with $H$ endowed with the measure $\mu$. Then, for any $\varepsilon>0$ there exist a set of parameters $(d,\theta,m)\in\ca{N}_{\sigma,L}^{H\to W}$ and a DeepONet $F^{d,\theta,m}: H\to W$, $F^{d,\theta,m} \in L^2(H,\mu;W)$ such that,
	\begin{align}\label{MTaprox}
		\int_{H}\norm{G(x)-F^{d,\theta,m}(x)}_W^2\mu(dx) <\varepsilon.
	\end{align} 	
\end{theorem}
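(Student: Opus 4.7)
The plan is to approximate $G$ by a DeepONet $F^{d,\theta,m}=E_{W,m}\circ f^\theta\circ P_{H,d}$ via a four-step telescopic argument, inserting one intermediate approximation at a time and controlling each piece separately in the $L^2(\mu;W)$ norm. First, I would reduce to a nice class of approximants: using that $L^2(H,\mu;W)$ mappings can be approximated by continuous, bounded mappings supported on a compact set $K\subset H$ (or equivalently, approximate $G$ by $G\cdot\mathbf 1_K$ for $K$ chosen so $\mu(H\setminus K)$ is small, exploiting that $\mu$ is a finite measure of finite second moment, together with density of simple/continuous functions in $L^2(\mu;W)$). Call the resulting approximant $\widetilde G$, with $\|G-\widetilde G\|_{L^2(\mu;W)}<\varepsilon/4$.

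Second, I would control the output side. Let $Q_{W,m}y=\sum_{i=1}^m\langle y,g_i\rangle_W g_i$ denote the $m$-th partial sum in the orthonormal basis $(g_n)_n$ of $W$. For every $y\in W$, $Q_{W,m}y\to y$ in $W$, with $\|Q_{W,m}y\|_W\leq \|y\|_W$, so by dominated convergence applied to $\|\widetilde G(x)-Q_{W,m}\widetilde G(x)\|_W^2\leq 4\|\widetilde G(x)\|_W^2\in L^1(\mu)$, one has $\|\widetilde G-Q_{W,m}\widetilde G\|_{L^2(\mu;W)}<\varepsilon/4$ for $m$ large enough. Notice that $Q_{W,m}\widetilde G=E_{W,m}\circ \Psi$, where $\Psi(x)=(\langle\widetilde G(x),g_i\rangle_W)_{i=1}^m$ is a bounded measurable map from $H$ to $\R^m$, continuous on $K$.

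Third, I would control the input side. By Lemma \ref{lemma:aris}, for any $\varepsilon'>0$ one can choose $d$ so that $\|P_{H,d}x-x\|_H<\varepsilon'$ uniformly for $x\in K$. Since $\Psi$ is continuous on the compact set $K$ it is uniformly continuous there, so $\|\Psi(x)-\Psi\circ\iota_d(x)\|_{\R^m}<\varepsilon/(8\|E_{W,m}\|_{op})$ uniformly in $x\in K$, where $\iota_d$ is a suitable extension of $P_{H,d}$ acting on $K$; outside $K$ one uses the compact support of $\widetilde G$ and the boundedness of $\mu$. Applying the bounded operator $E_{W,m}$ this gives $\|E_{W,m}\circ\Psi - E_{W,m}\circ\Psi\circ(\,\text{suitable lift of }P_{H,d})\|_{L^2(\mu;W)}<\varepsilon/4$.

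Finally, the resulting object of the form $\Psi\circ(\text{lift})$ is, up to identifying the range of $P_{H,d}$ with $\R^d$, a continuous map $\R^d\to\R^m$ that we need to approximate uniformly on the compact image of $K$ under $P_{H,d}$. This is the classical finite-dimensional universal approximation theorem for non-polynomial activations: there exists $\theta\in\mathcal N_{\sigma,L,d,m}$ with realization $f^\theta$ such that $\sup_{x\in K}\|\Psi(P_{H,d}x)-f^\theta(P_{H,d}x)\|_{\R^m}$ is as small as desired, and hence $\|E_{W,m}\circ\Psi\circ P_{H,d}-E_{W,m}\circ f^\theta\circ P_{H,d}\|_{L^2(\mu;W)}<\varepsilon/4$. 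Summing the four error contributions via the triangle inequality yields the bound \eqref{MTaprox}. The main obstacle I anticipate is the reduction to compact sets: $\mu$ is not necessarily tight in the classical Prokhorov sense on an arbitrary separable Hilbert space without additional hypotheses, and one may need to rely on the inner regularity of finite Borel measures on Polish spaces and the finite second moment assumption to guarantee that $K$ can be chosen large enough to absorb the $L^2$-tail of $\widetilde G$ while being compact so that Lemma \ref{lemma:aris} is available.
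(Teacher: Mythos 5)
Your decomposition is essentially the same as the paper's: truncate $G$ to a bounded map, apply Lusin's theorem to obtain continuity on a large compact $K$, project the output onto a finite-dimensional subspace of $W$, reduce the input to finite dimensions via $P_{H,d}$ using Lemma \ref{lemma:aris}, approximate the resulting finite-dimensional continuous map with a classical universal-approximation DNN, and finally sum the four error contributions. The paper packages the finite-range continuous case by invoking Chen--Chen's Theorem~4 as a black box, while you build it more explicitly; the paper chooses $m$ by compactness of $G(K)$ in $W$ whereas you use dominated convergence. These are cosmetic differences.

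There is, however, a genuine gap in the last step: you must control the tail integral $\int_{H\setminus K}\norm{G(x)-F^{d,\theta,m}(x)}^2_W\,\mu(dx)$. After truncation $G$ is bounded, so $\int_{H\setminus K}\norm{G(x)}^2_W\,\mu(dx)\le M^2\mu(H\setminus K)$ is small, but there is no a priori control on $\norm{F^{d,\theta,m}(x)}_W$ for $x\notin K$: the finite-dimensional realization $f^\theta$ is generically unbounded (ReLU networks grow linearly, with a Lipschitz constant depending on $\theta$ that you never controlled), and $P_{H,d}(H\setminus K)$ is unbounded. Your remark that ``outside $K$ one uses the compact support of $\widetilde G$'' does not help, because $\widetilde G$ and $F^{d,\theta,m}$ are different objects and the DeepONet is certainly not compactly supported. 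The paper's Step~4 — composing with a second, explicitly constructed ``clipping'' network $\theta_2$ so that $\norm{f^{\theta_2}\circ f^{\theta_1}}\le 2M$ on all of $\R^\kappa$ — is precisely what closes this hole, and this is why the final parameter sits in $\ca{N}_{\sigma,7,d,\kappa}$ rather than in a two-layer class. Your proof needs this (or an equivalent a priori bound on the DeepONet) to conclude.

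Two smaller remarks. First, your anticipated obstacle about tightness is unfounded: every finite Borel measure on a Polish space (in particular a separable Hilbert space) is inner regular by Ulam's theorem, so Lusin's theorem applies directly and the compact $K$ exists without extra hypotheses; the finite second moment is used for other bounds, not for tightness. Second, your composition $\Psi\circ(\text{lift of }P_{H,d})$ requires a little care because the lifted projection may carry $K$ outside $K$, where $\Psi$ need not be defined or continuous; this is exactly what the equicontinuity machinery of \cite{CC95} (Lemma \ref{L3p1} here) is designed to handle, and you should either invoke it or extend $\widetilde G$ continuously by Tietze before projecting.
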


\begin{remark}
The proof of this result is essentially contained in \cite[Theorem $5.14$]{Cas22}. For the sake of completeness we include below a sketch of proof.  For further details, see the previous reference. 
\end{remark}

\begin{remark}
Estimate \eqref{MTaprox} is deeply motivated by a result by Tianping Chen and Hong Chen \cite{CC95}, who showed the universal approximation to nonlinear operators by neural networks with arbitrary activation functions and its applications to dynamic systems. 
\end{remark}

\begin{remark}
The proof of this result one can estimate the number of hidden layers $L$ in $F^{d,\theta,m}$. Indeed, in \cite{Cas22} one can take $L=7$. This will be used in our main results as well. 
\end{remark}

\begin{proof}[Sketch of proof of Theorem \ref{theorem:infiniteapprox}]
 Fix $\varepsilon > 0$. Here $d$ represents the size of a DNN, and not the dimension of the Elliptic Calder\'on's problem stated in the introduction.

\medskip

{\bf Step 1. Case $G$ of finite range.} First we apply \cite[Theorem $4$]{CC95} to prove the existence of a DeepONet $F^{d,\theta_1,m}=F^{H,d,\theta_1,m,\R^m}$ with $(d,\theta_1,m)\in\ca{N}^{H\to\R^{m}}_{\sigma,2}$, approximating $G$ in the case 
\[
\hbox{$G\colon K\subseteq H\to\R^m$, \quad $G\in C(K;\R^m)$, \quad  for fixed $m\in\bb{N}$ and $K$ a compact in $H$.}
\]
As expressed before, this is the first result proved by Chen and Chen \cite{CC95}. Notice that the number of hidden layers here can be chosen to be $L=2$.
 

\medskip

{\bf Step 2.} By approximations via dominated convergence we can reduce the problem to the case of a bounded mapping $G\in L^2(H,\mu;W)$ such that 
\[
\|G\|_{L^\infty(H,W)}= \hbox{esssup}_{x\in H} \|G(x)\|_{W} <+\infty.
\]
This is done by considering the modified mapping defined by $G(x)$ if $\|G(x)\|_W\leq R$, and $ \frac{RG(x)}{\|G(x)\|_W}$ if $\|G(x)\|_W\geq R$, with $R\to +\infty$.  Now, given that $G$ is a measurable function and $\mu$ a finite measure, by virtue of the Lusin's theorem,  there exists a compact $K=K_{\varepsilon}\subseteq H$ such that the restriction of $G$ to $K$ is continuous and $\mu(H\setminus K)\le\varepsilon$. 

\medskip

We now use the properties \cite{Cas22} of the compact set $G(K)$ embedded in the Hilbert space $W$ to get a {\bf fixed} $\kappa=\kappa_{\varepsilon}\in\bb{N}$ such that,
\begin{align}\label{Proyeccioness}
    \underset{x\in K}{\sup}\norm{G(x)- P_\kappa(G(x))}_W <\varepsilon, \qquad P_\kappa(w):=\sum_{i=1}^{\kappa} \prom{w,g_i}g_i.
\end{align}
Note that $P_{\kappa}$ is not the same operator defined in \eqref{EE}.This means that we can work, up to an $\varepsilon$-error, with {\it the orthogonal projection} of $w\in G(K)$ onto a finite dimensional space. 
Let $\widetilde{G}=P_{\kappa}\circ G\colon K\to W$. Note that $\widetilde G$ has finite range, and from \eqref{Proyeccioness},
		\begin{align*}
			\underset{x\in K}{\sup}\norm{G(x) - \widetilde{G}(x)}_W<\varepsilon.
		\end{align*}
Thus we can use Step $1$ to get a first DeepOnet $F^{d,\theta_1,\kappa}=F^{H,d,\theta_1,\kappa,\R^\kappa}$ with $(d,\theta_1,\kappa)\in\ca{N}^{H\to\R^{\kappa}}_{\sigma,2}$ that is $\varepsilon$-good approximating the finite dimensional representation of $G$ on $K$ and is also bounded.

\medskip



Indeed, one can first obtain that for the continuous function $P_{W,\kappa}\circ\widetilde{G}\colon K\to W\to \R^{\kappa}$, we can take $(d,\theta_1,\kappa)\in\ca{N}^{H\to\R^{\kappa}}_{\sigma,2}$ such that,
		\begin{align*}
			\underset{x\in K}{\sup} \norm{ F^{H,d,\theta_1,\kappa,\R^{\kappa}}(x) - (P_{W,\kappa}\circ \widetilde{G})(x)}_{\R^\kappa} < \delta.
		\end{align*}
Now take any $x\in K$ and the DeepONet generated by $(H,d,\theta_1,\kappa,W)$ to obtain
		\begin{align}\label{eq:bound1_uat}
			\norm{F^{H,d,\theta_1,\kappa,W}(x) - \widetilde{G}(x)}_W &= \norm{({E}_{W,\kappa}\circ f^{\theta_1}\circ P_{H,d})(x) - \widetilde{G}(x)}_W\nonumber\\
			& = \norm{\sum_{i=1}^{\kappa} (f^{\theta_1}\circ P_{H,d})(x)_i g_i - \sum_{i=1}^{\kappa} \prom{G(x),g_i}_W g_i }_W\nonumber\\
			&= \norm{(f^{\theta_1}\circ P_{H,d})(x) - \parent{\prom{G(x),g_i}_W}_{i=1}^{\kappa}}_{\R^{\kappa}}\nonumber\\
			&= \norm{F^{H,d,\theta_1,\kappa,\R^{\kappa}}(x) - (P_{H,\kappa}\circ \widetilde{G})(x)}_{\R^{\kappa}} < \delta.
		\end{align}
		Therefore, by using previous estimate and that $G$ is bounded by some $M>0$, one show that $F^{H,d,\theta_1,\kappa,W}$ is also bounded. Indeed,
		\begin{align*}
			\norm{F^{H,d,\theta_1,\kappa,W}(x)}_W \le \norm{F^{H,d,\theta_1,\kappa,W}(x) - \widetilde{G}(x)}_W + \norm{\widetilde{G}(x) - G(x)}_W + \norm{G(x)}_W < 2\delta + M.
		\end{align*}
		\textbf{Step 4.}
		Applying the clipping Lemma \cite{Cas22}, one can assume $\delta$ small enough such that one can find  $\theta_2\in\ca{N}_{\sigma,5,\kappa,\kappa}$ such that,
		\begin{align}\label{eq:clipping-gamma}
			\begin{dcases}
				\norm{f^{\theta_2}(x) - x} < \delta,\ &\norm{x} < M + 2\delta\\
				\norm{f^{\theta_2}(x)} \le 2M,\ &\forall x\in \bb{R}^{\kappa}.
			\end{dcases}
		\end{align}
		Recall that the norm used in previous equation is the usual norm in $\R^{\kappa}$, and $\theta_2$ is constructed using the classical activation function $\sigma=\sigma_{\text{ReLu}}$. Consider the following function:
		\begin{align*}
			E_{W,\kappa}\circ f^{\theta_2}\circ f^{\theta_1}\circ P_{H,d} = E_{W,\kappa}\circ f^{\theta_2\circ\theta_1}\circ P_{H,d}=F^{H,d,\theta_1\circ\theta_2,\kappa,W}.
		\end{align*}
We have
		\begin{align*}
			\norm{F^{H,d,\theta_2\circ\theta_1,\kappa,W}(x) - \widetilde{G}(x)}_W &\le \norm{F^{H,d,\theta_2\circ\theta_1,\kappa,W}(x) - F^{H,d,\theta_1,\kappa,W}(x)}_W + \norm{F^{H,d,\theta_1,\kappa,W}(x) - \widetilde{G}(x)}_W\\
			&\le \norm{\sum_{i=1}^{\kappa}f^{\theta_2}_i\parent{f^{\theta_1}\parent{P_{H,d}(x)}} g_i -  \sum_{i=1}^{\kappa} \parent{f^{\theta_1}\circ P_{H,d}}_i (x) g_i}_W + \delta\\
			&\le\norm{f^{\theta_2}\parent{f^{\theta_1}\parent{P_{H,d}(x)}} - f^{\theta_1}\parent{P_{H,d}(x)}}_{\R^{\kappa}} + \delta < 2\delta,
		\end{align*}
		where we used estimates \eqref{eq:bound1_uat} and \eqref{eq:clipping-gamma}.\\
		
		\textbf{Step 5.} Now we combine all previous bounds, let $F=F^{H,d,\theta_2\circ\theta_1,\kappa,W}$ with $(d,\theta_2\circ\theta_1,\kappa)\in\set{d}\times\ca{N}_{\sigma,7,d,\kappa}\times\set{\kappa}$, then 
		\begin{align*}
			\int_H \norm{G(x) - F(x)}_W^2 \mu (dx) &= \int_{H\setminus K} \norm{G(x) - F(x)}_W^2 \mu (dx) + \int_{K} \norm{G(x) - F(x)}_W^2 \mu (dx)\\
			&\le 2\int_{H\setminus K} \norm{G(x)}_W^2\mu(dx) + 2\int_{H\setminus K}\norm{F(x)}^2_W\mu(dx) \\
			&~{} \quad  + 2\int_{K} \norm{G(x) - \widetilde{G}(x)}_W^2\mu(dx) + 2\int_{K} \norm{\widetilde{G}(x) -  F(x)}_W^2\mu (dx)\\
			&\le \mu\parent{H\setminus K}(2M^2 + 2M^2) + 2\delta^2 + 2\delta^2 \le 8\delta^2=\varepsilon,
		\end{align*} 
		which is the desired conclusion.
\end{proof}

\section{Approximation of the   Dirichlet-to-Neumann operator using DeepONets}\label{Sect:5}

In this section we prove the main results of this paper, that is, Theorems \ref{MT1}, \ref{MT2} {\color{black} and \ref{MT3}}. We first start with the proof of Theorem \ref{MT1}. Recall that for this proof we only need $a\in X(\Omega).$

\subsection{Approximation of the   Dirichlet-to-Neumann map $\Lambda_a$ for fixed $a$}

In what follows, we assume $a\in X(\Omega)$ fixed. First, we prove and approximation result in the case of a probability measure defined on $H^{1/2}(\partial \Omega).$ This result is a consequence of Theorem \ref{theorem:infiniteapprox}.

\begin{theorem}\label{MT1-general-case}
    Let $\mu$ be a finite measure on $H^{1/2}(\partial\Omega)$ with finite second moment and $a\in X(\Omega)$. Let $\Lambda_a$ be as in \eqref{weak-fomulation1}. Then, there exist $(d,\theta,m)$ and a Deep-H-Onet $F^{d,\theta,m}: H^{1/2}(\partial \Omega) \longrightarrow H^{-1/2}(\partial \Omega)$ as in \eqref{eq:DO-def}, such that $F^{d,\theta,m}\in L^2( H^{1/2}(\partial \Omega),\mu;  H^{-1/2}(\partial \Omega))$ and 
	\begin{equation}\label{eq:DO_aprox}
		\int_{H^{1/2}(\partial \Omega)} \norm{\Lambda_a f - F^{d,\theta,m}f}_{-1/2}^2 \mu(df) < \varepsilon.
	\end{equation}
\end{theorem}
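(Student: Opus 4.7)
The plan is to realize Theorem \ref{MT1-general-case} as a direct application of the universal approximation result stated in Theorem \ref{theorem:infiniteapprox}. Since that theorem is formulated for arbitrary $L^2(H,\mu;W)$-mappings between separable Hilbert spaces, all I need to verify is that, with $H=H^{1/2}(\partial\Omega)$ and $W=H^{-1/2}(\partial\Omega)$, the operator $G:=\Lambda_a$ belongs to the space $L^2(H^{1/2}(\partial\Omega),\mu;H^{-1/2}(\partial\Omega))$. Once this $L^2$-integrability is established, the existence of the required parameters $(d,\theta,m)\in\ca{N}_{\sigma,L}^{H^{1/2}(\partial\Omega)\to H^{-1/2}(\partial\Omega)}$ and the bound \eqref{eq:DO_aprox} follow immediately from \eqref{MTaprox}.

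First I would check measurability. By Lemma \ref{lemma:Lambda-linear-bounded}, $\Lambda_a$ is a bounded linear (hence continuous) operator from $H^{1/2}(\partial\Omega)$ into $H^{-1/2}(\partial\Omega)$, so it is automatically Borel measurable. Next, I would estimate the norm in the target space using the quantitative bound \eqref{Lambda-f-bound}, namely
\begin{equation*}
\norm{\Lambda_a f}_{-1/2}\le C\norm{a}_{\infty}\norm{f}_{1/2},
\end{equation*}
where $C=C(d,\Omega)>0$. Integrating the square of this bound against $\mu$ and using the hypothesis that $\mu$ is finite with finite second moment yields
\begin{equation*}
\int_{H^{1/2}(\partial\Omega)} \norm{\Lambda_a f}_{-1/2}^2\, \mu(df) \;\le\; C^2 \norm{a}_\infty^2 \int_{H^{1/2}(\partial\Omega)} \norm{f}_{1/2}^2\, \mu(df) \;<\;+\infty,
\end{equation*}
which is precisely the statement $\Lambda_a\in L^2(H^{1/2}(\partial\Omega),\mu;H^{-1/2}(\partial\Omega))$. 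Note that the fact that $a\in X(\Omega)$ forces $\norm{a}_\infty<\infty$, so the constant in front of the integral is finite, and no additional regularity on $a$ is needed at this stage.

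With these ingredients in hand, I would directly invoke Theorem \ref{theorem:infiniteapprox} applied to $G=\Lambda_a$: given $\varepsilon>0$, there exist a triple $(d,\theta,m)\in\ca{N}_{\sigma,L}^{H^{1/2}(\partial\Omega)\to H^{-1/2}(\partial\Omega)}$ and a DeepONet $F^{d,\theta,m}=E_{H^{-1/2}(\partial\Omega),m}\circ f^\theta\circ P_{H^{1/2}(\partial\Omega),d}$ (following Definition \ref{def:deeponets}) such that
\begin{equation*}
\int_{H^{1/2}(\partial\Omega)} \norm{\Lambda_a f - F^{d,\theta,m} f}_{-1/2}^2\, \mu(df) < \varepsilon,
\end{equation*}
and moreover $F^{d,\theta,m}\in L^2(H^{1/2}(\partial\Omega),\mu;H^{-1/2}(\partial\Omega))$ because its range is finite dimensional and it is continuous. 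This is exactly \eqref{eq:DO_aprox}, and the proof is complete. From the sketch of Theorem \ref{theorem:infiniteapprox} one can additionally read off that the number of hidden layers may be taken equal to $L=7$, inherited from the clipping construction in Step 4 of that proof.

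The main difficulty is actually conceptual rather than technical: recognizing that the bounded linear character of $\Lambda_a$, together with the finite second moment of $\mu$, is exactly what is needed to place $\Lambda_a$ in the square-integrable regime to which Theorem \ref{theorem:infiniteapprox} applies. Once this embedding is performed, no further specific structure of the Calder\'on PDE enters the argument; the DeepONet approximation is a ``black box'' consequence of the universal approximation theorem. Note in particular that the finite second moment assumption on $\mu$ cannot be weakened with this argument since the estimate is linear in $\norm{f}_{1/2}$ and we need its square to be $\mu$-integrable.
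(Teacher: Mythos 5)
Your proposal follows exactly the same route as the paper: reduce to verifying $\Lambda_a\in L^2(H^{1/2}(\partial\Omega),\mu;H^{-1/2}(\partial\Omega))$ via the operator-norm bound \eqref{Lambda-f-bound} from Lemma \ref{lemma:Lambda-linear-bounded}, then invoke Theorem \ref{theorem:infiniteapprox} as a black box. Your write-up is actually slightly more careful than the paper's (it makes the measurability step explicit and keeps the constants $C^2\norm{a}_\infty^2$ correct, where the paper has a cosmetic typo), but the argument is identical in substance.
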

\begin{proof}
    Recall that from Lemma \ref{lemma:Lambda-linear-bounded} one has $\Lambda_a \in  \mathcal L(H^{1/2}(\partial\Omega),H^{-1/2}(\partial\Omega))$. By Theorem \ref{theorem:infiniteapprox} applied to $H=H^{1/2}(\partial\Omega)$ and $W=H^{-1/2}(\partial\Omega)$, {\color{black}it is sufficient to show the square-integrability of $\Lambda_a$}. Indeed,
    \begin{align*}
    	\int_{H^{1/2}(\partial\Omega)} \norm{\Lambda_a f}_{1/2}^2 \mu(df) \le \norm{a}_{\infty}\int_{H^{1/2}(\partial\Omega)}\norm{f}^2_{1/2}\mu(df),
    \end{align*}
	which is finite since $\mu$ has finite second moment and $a$ is bounded.
\end{proof}

{\color{black}

\subsection{An alternative method for the proof of Theorem \ref{MT1-general-case}}
	
In the following we will comment on a alternative proof of Theorem \ref{MT1-general-case} taking advantage of $\Lambda_a$ being linear and performing the error decomposition proposed in \cite{LMK21,BHK+21}. The reader will see that with these techniques one can get a better grasp on the error incurred by the dimension reduction and by the finite dimensional neural network. First, let us introduce some notation. Let $\varphi$ be the isometric isomorphism between $H^{-1/2}(\partial\Omega)$ and $H^{1/2}(\partial\Omega)$. We argue that it is enough to approximate the mapping $\Lambda_a^{\varphi}:=\varphi\circ\Lambda_a$ by a DeepOnet $F^{\theta}\colon H^{1/2}(\partial\Omega)\to H^{1/2}(\partial\Omega)$, it is key that $\varphi$ is an isometry. Indeed, we aim to minimize
\begin{align}\label{eq:lamda-varphi}
	\bb{E}_{f\sim\mu}\norm{(\varphi^{-1}\circ F^{\theta})(f)-\Lambda_a(f)}^2_{-1/2} = \bb{E}_{f\sim\mu}\norm{F^{\theta}(f)-\Lambda^{\varphi}_a(f)}^2_{1/2}.
\end{align}	
The simplification that has been introduced along with $\varphi$ allows us to avoid the rather complicated dual space $H^{-1/2}(\partial\Omega)$. Furthermore, note that from the definition of $\Lambda_a$, one deduces that for all $f,g\in H^{1/2}(\partial\Omega)$
\begin{align*}
	\prom{\Lambda_a f,g} = \prom{\Lambda_a^{\varphi}f,g}_{1/2},
\end{align*}	
where $\prom{\cdot,\cdot}_{1/2}$ denotes the inner product in the Hilbert space $H^{1/2}(\partial\Omega)$, consequently we denote by $\norm{\cdot}_{1/2}$ the norm in $H^{1/2}(\partial\Omega)$ and $\norm{\cdot}_{-1/2}$ the norm in $H^{-1/2}(\partial\Omega)$. From that observation follows that $\Lambda_a^{\varphi}$ is a self-adjoint bounded linear operator and, therefore, we have access to its eigen-system $\set{\lambda_k,\psi_k\colon k\in\bb{N}}$ on $H^{1/2}(\partial\Omega)$. In the other hand, from the measure $\mu$ we have access to the bounded, self-adjoint and, more important, trace-class operator $Q = Q_{\mu}$ defined for $f,g\in H^{1/2}(\partial\Omega)$ by
\begin{align*}
	\prom{Qg,h}_{1/2} = \bb{E}_{f\sim\mu}\prom{f,g}_{1/2}\prom{f,h}_{1/2}.
\end{align*}
Said linear operator has an eigen-system $\set{\alpha_k,e_k\colon k\in\bb{N}}$ which satisfies
\begin{align*}
	\text{Tr}(Q) = \sum_{k=1}^{\infty} \alpha_k = \bb{E}_{f\sim\mu}\norm{f}_{1/2}^2 < \infty. 
\end{align*}
Use the eigen-system of $Q$ to define the linear operators $E_d = E_{H^{1/2}(\partial\Omega),d}$ and $P_d=P_{H^{1/2}(\partial\Omega),d}$, recall \eqref{EE}. The resulting operators are linear and such that their operator norm is bounded by $1$.

\medskip

Now, accordingly to Definition \ref{def:deeponets}, we are to write down the right hand side of \eqref{eq:lamda-varphi} with $F^{\theta}=E_d\circ f^{\theta}\circ P_d$ and apply the mentioned error decomposition (see e.g. \cite[Lemma 3.4]{LMK21}). 
\begin{align*}
&	\bb{E}_{f\sim\mu}\norm{(E_d\circ f^{\theta}\circ P_d)(f) - \Lambda_a^{\varphi}(f)}_{1/2}^{2} \\
&\le 3\bb{E}_{f\sim\mu}\norm{(E_d\circ f^{\theta}\circ P_d)(f)-(E_d\circ P_d\circ \Lambda_a^{\varphi}\circ E_d\circ P_d)(f)}_{1/2}^2\\
	& \quad + 3\bb{E}_{f\sim\mu}\norm{(E_d\circ P_d\circ \Lambda_a^{\varphi}\circ E_d\circ P_d)(f) - (E_d\circ P_d\circ \Lambda_a^{\varphi})(f)}_{1/2}^2\\
	& \quad + 3\bb{E}_{f\sim\mu}\norm{(E_d\circ P_d\circ \Lambda_a^{\varphi})(f) - \Lambda_a^{\varphi}(f)}_{1/2}^2\\
	&\quad  =: 3(I_1+ I_2 + I_3).
\end{align*}
\begin{itemize}
	\item[\textbf{$I_1$}:] By using that $E_d:\Rd\to H^{1/2}(\partial\Omega)$ satisfies $\norm{E_d}_{op}\le 1$ and the change of variable for the pushforward,
	\begin{align*}
		I_1\le \bb{E}_{x\sim\mu\circ P_d^{-1}}\norm{f^{\theta}(x) - (P_d\circ \Lambda_a^{\varphi}\circ E_d)(x)}^2_{\Rd}.
	\end{align*}
	Now, at this point one can stop and just argue that since $(P_d\circ\Lambda_a^{\varphi}\circ E_d)$ is a finite dimensional map and $\mu\circ P_d^{-1}$ is a finite measure, one could apply the classic UAT and find a parameter $\theta$ such that the previous term is small. But, we rather note that the following holds for any $x\in\Rd$,
	\begin{align}\label{eq:finite-dim-dtn}
		(P_d\circ \Lambda_a^{\varphi}\circ E_d)(x) = \begin{pmatrix} \prom{\Lambda_a^{\varphi}e_1,e_1}_{1/2} & \cdots & \prom{\Lambda_a^{\varphi}e_d,e_1}_{1/2}\\ \vdots & \ddots & \vdots \\ \prom{\Lambda_a^{\varphi}e_1,e_d}_{1/2} & \cdots & \prom{\Lambda_a^{\varphi}e_d,e_d}_{1/2} \end{pmatrix}x .
	\end{align}
	It is natural then to suppose our model be given by the linear function $f^{\theta}(x) = \theta x$ for $\theta\in\R^{d\times d}$. Follows by using that $\norm{P_d}_{op}\le 1$,
	\begin{align*}
		I_1\le \bb{E}_{f\sim\mu}\norm{f}_{1/2}^2\sum_{i,j}^d |\theta_{i,j} - \prom{\Lambda_a^{\varphi}e_j,e_i}_{1/2}|^2.
	\end{align*}
	An important observation at this point is that, if $\Lambda_a^{\varphi}$ were trace-class, should we use the eigen-system of $\Lambda_a^{\varphi}$ instead of that of $Q$ to base the operators $E_d$ and $ P_d$ on. This would give a cleaner result reducing the complexity of the optimization problem from $d^2$ to $d$ since the obtained matrix \eqref{eq:finite-dim-dtn} would be a diagonal one containing the first $d$ eigenvalues of $\Lambda_a^{\varphi}$.
	\item[\textbf{$I_2$}:] By the already mentioned arguments and \eqref{Lambda-f-bound} (recall the constant $C$ for the referenced equation),
	\begin{align*}
		I_2\le C\norm{a}_{\infty}\bb{E}_{f\sim\mu}\norm{(E_d\circ P_d)(f) - f}^2_{1/2} \le C\norm{a}_{\infty} \sum_{k={d+1}}^\infty \alpha_k,
	\end{align*}
	where we have used that $E_d\circ P_d = \sum_{k=1}^d\prom{\cdot,e_k}_{1/2}e_k$. Thus, as already mentioned in \cite{BHK+21}, the approximation error is closely related to the spectral decay of $Q$.\\
	\item [\textbf{$I_3$}:] By the arguments mentioned before,
	\begin{align*}
		I_3= \bb{E}_{f\sim\mu}\norm{(E_d\circ P_d)(\Lambda_a^{\varphi}f) - \Lambda_a^{\varphi}f}_{1/2}^2.
	\end{align*}
	Now, here is where our argument differs from \cite{BHK+21,LMK21}. In those articles, the authors project onto a finite-dimensional subspace learned from data and for which the projection error is estimated. We will follow another idea: to obtain the desired tolerance $\varepsilon$, first we prove that there exists a compact set $K$ with $\varepsilon$-almost full measure. We then apply Lemma \ref{lemma:aris} with the ONB $\set{e_k\colon k\in\bb{N}}$, choosing the compact $\Lambda_a^{\varphi}K$ and tolerance $\varepsilon$.   
\end{itemize}	  
}
%
%
%
%


\subsection{Approximation of the Calder\'on{\color{black}'s} mapping $\Lambda$}

Now we prove a slightly general case of Theorem \ref{MT2}. For brevity, recall the notation
\[
W_{\mu}:= L^2(H^{1/2}(\partial \Omega)\times H^{1/2}(\partial \Omega),\mu\otimes\mu; \R),
\]
for $\mu$ probability measure on $H^{1/2}(\partial \Omega)$ {\color{black} with finite second moment}. The proof of Theorem \ref{MT2} is an easy consequence of this corollary.

\begin{theorem} \label{MT2-general-case}
Let $\mu$ and $\eta$ be probability measures on $H^{1/2}(\partial \Omega)$ and $H^m(\Omega)$, respectively, with finite second moment. Let $\Lambda$ as in \eqref{Lambda}. Then, for all $\varepsilon>0$ there exists $(d,\theta,\kappa)$ such that
\be\label{eq:DO_Lambda}
\int_{X(\Omega)\cap H^{m}(\Omega)} \norm{\Lambda_a - F^{d,\theta,\kappa}(a)}^2_{W_{\mu}} \eta(da) < \varepsilon.
\ee
\end{theorem}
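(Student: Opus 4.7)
The plan is to reduce Theorem \ref{MT2-general-case} to the universal approximation result of Theorem \ref{theorem:infiniteapprox}, which directly outputs a DeepONet approximating any $L^2$-mapping between separable Hilbert spaces. The work therefore amounts to re-casting the Calder\'on's mapping as such an $L^2$-mapping. On the input side, I would use $H^m(\Omega)$ with $m > d/2$ as the separable Hilbert space and $\eta$ as the measure on it. On the output side, the natural candidate $\mathcal L(H^{1/2}(\partial\Omega), H^{-1/2}(\partial\Omega))$ is not a Hilbert space, so I would instead represent each $\Lambda_a$ as the bilinear form $(f,g) \mapsto \langle \Lambda_a f, g \rangle$, which by Lemma \ref{lemma:linear-bilinear} lives in $W_{\mu}$; this is the correct Hilbert ambient for applying the general theorem.

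Second, since the Calder\'on's mapping is a priori only defined on $X(\Omega) \cap H^m(\Omega)$, but Theorem \ref{theorem:infiniteapprox} requires a mapping defined on all of the source Hilbert space, I would replace $\Lambda$ with its trivial extension $\Gamma$ from \eqref{eq:Gamma}, setting it to zero outside $X(\Omega) \cap H^m(\Omega)$. By Lemma \ref{prop:L2mapping}, this extension belongs to $L^2(H^m(\Omega), \eta; W_{\mu})$, so all hypotheses of Theorem \ref{theorem:infiniteapprox} are met.

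Applying Theorem \ref{theorem:infiniteapprox} to $G = \Gamma$, with source $(H^m(\Omega),\eta)$ and target $W_{\mu}$, yields parameters $(d,\theta,\kappa)$ and a DeepONet $F^{d,\theta,\kappa}\colon H^m(\Omega) \to W_{\mu}$ such that
\[
\int_{H^m(\Omega)} \norm{\Gamma(a) - F^{d,\theta,\kappa}(a)}^2_{W_{\mu}} \eta(da) < \varepsilon.
\]
Since $\Gamma(a) = \Lambda_a$ on $X(\Omega) \cap H^m(\Omega)$ and the integrand is nonnegative, restricting the integral to this subset gives exactly \eqref{eq:DO_Lambda}, concluding the proof.

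The main obstacle is conceptual rather than computational: identifying a genuine separable Hilbert-space setting in which the full nonlinear Calder\'on's mapping becomes an $L^2$-integrable object. Two subtleties are worth noting. First, one must trade the operator-norm view of $\Lambda_a$ for its bilinear-form avatar in $W_{\mu}$; this is what enables the $L^2$-integrability estimate of Lemma \ref{prop:L2mapping}, which crucially uses $\norm{\Lambda_a}_{op} \leq C\norm{a}_\infty$ combined with the Sobolev embedding $H^m(\Omega) \hookrightarrow L^\infty(\Omega)$ (hence the hypothesis $m > d/2$) and the finite-second-moment assumptions on $\mu$ and $\eta$. Second, the extension $\Gamma$ must be chosen so as not to destroy square-integrability, which the zero-extension trivially achieves since it leaves the norm unchanged. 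Once both considerations are in place, the result is a direct corollary of the universal approximation theorem for DeepONets.
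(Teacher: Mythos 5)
Your proposal is correct and follows essentially the same route as the paper: extend $\Lambda$ to the mapping $\Gamma$ from \eqref{eq:Gamma}, invoke Lemma \ref{prop:L2mapping} to get $\Gamma \in L^2(H^m(\Omega),\eta;W_\mu)$, apply Theorem \ref{theorem:infiniteapprox}, and then restrict the integral to $X(\Omega)\cap H^m(\Omega)$. Your discussion of the bilinear-form representation and why the zero-extension preserves $L^2$-integrability matches the paper's reasoning and is accurate.
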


\begin{proof}
To prove inequality \eqref{eq:DO_Lambda} we will work with the mapping $\Gamma:H^{m}(\Omega)\to W_{\mu}$ defined in \eqref{eq:Gamma}. By Lemma \ref{prop:L2mapping}, $\Gamma$ is an $L^2(H^{m}(\Omega),\eta,W_{\mu})$ mapping. Now we can use Theorem \ref{theorem:infiniteapprox} to prove that for all $\varepsilon>0$ there exists $(d,\theta,\kappa) \in \mathcal{N}^{H^{m}(\Omega)\to W_{\mu}}_{\sigma,7}$ such that
\[
\int_{H^{m}(\Omega)} \norm{\Gamma(a) - F^{d,\theta,\kappa}(a)}_{W_{\mu}}^{2} \eta(da) < \varepsilon.
\]
By definition of $\Gamma$ we have
\begin{align*}
\int_{H^{m}(\Omega)} \norm{\Gamma(a) - F^{d,\theta,\kappa}(a)}_{W_{\mu}}^{2} \eta(da) &= \int_{X(\Omega)\cap H^{m}(\Omega)} \norm{\Lambda_a - F^{d,\theta,\kappa}(a)}_{W_{\mu}}^{2} \eta(da) \\
& \quad + \int_{X(\Omega)^c \cap H^{m}(\Omega)} \norm{F^{d,\theta,\kappa}(a)}_{W_{\mu}}^{2} \eta(da)\\
&\geq \int_{X(\Omega)\cap H^{m}(\Omega)} \norm{\Lambda_a - F^{d,\theta,\kappa}(a)}_{W_{\mu}}^{2} \eta(da).
\end{align*}
Therefore, for all $\varepsilon>0$ there exists $(d,\theta,\kappa) \in \mathcal{N}_{\sigma,7}^{H^{m}(\Omega) \to W_{\mu}}$ such that
\[
    \int_{X(\Omega)\cap H^{m}(\Omega)} \norm{\Lambda_a - F^{d,\theta,\kappa}(a)}^2_{W_{\mu}} \eta(da) < \varepsilon.
\]
This ends the proof.
\end{proof}

{\color{black}
\begin{remark}
We remark the following:
\begin{itemize} 
\item As stated before, we need more regularity on the conductivity $a$ in Theorem \ref{MT2-general-case} essentially to show that the Calder\'on's mapping can be embedded in a suitable square-integrable Hilbert space, see Lemma \ref{prop:L2mapping}.
\item For this particular case one could take advantage of the structure of $\Lambda$. Indeed, we could for example bypass the extension by 0 requiring the measure $\eta$ to be supported on $X(\Omega)$ since the operator is not defined outside this set. To do so, we just need to change the arguments in Step 2 of the proof of Theorem \ref{theorem:infiniteapprox}. There and only there we introduce the trace Borelian measure space $(X(\Omega)\cap H^{m}(\Omega),\eta)$ and apply Luzin's theorem to obtain a compact set $K\subset X(\Omega)\cap H^{m}(\Omega)$ with $\eta(K)\ge 1-\varepsilon$. Furthermore, $\Lambda\big|_{K}$ is continuous. This change would ensure that the obtained compact set is a subset of $X(\Omega)\cap H^{m}(\Omega)$ and therefore $\Lambda$ continue to be well-defined. The next steps can remain unchanged.  
\end{itemize}
\end{remark}
}

{\color{black}
	
	\subsection{Approximation of the inverse Calderón's mapping}
	
	In this subsection we prove the approximation result for the inverse Calder\'on's mapping announced in Main results. {\color{black} We also provide two Theorems, each one related with each proposed extension.\\
		
		The first Theorem we prove is for the extension of the inverse Calderón's mapping $\Sigma_M$ on the space $V_{\mu} = L^2(H^{1/2}(\partial \Omega),\mu,H^{-1/2}(\partial \Omega))$. This Theorem states that there exists a DeepOnet that if $a \in L^2(\Omega)$ is a conductivity such that 
		\[
		\frac{1}{M} \leq a \leq M,
		\]
		then it can be approximated by the previous mentioned DeepOnet evaluated on the Dirichlet-to-Neumann operator $\Lambda_a$.}
	
	{\color{black}
		\begin{theorem}\label{teo:DO_sup1}
			Let $\mu$ and $\nu$ be probability measures on $H^{1/2}(\partial \Omega)$ and $V_{\mu}$, respectively, with finite second moment. Suppose that $\mu$ satisfies assumptions \ref{sup:1}. For $M \in (0,\infty)$ let $\Sigma_M$ be the inverse Calderón's mapping restricted to $Y_M(\Omega)$, defined as in \eqref{eq:inverse_operator}. Then, for all $\varepsilon > 0$ there exists $(d,\theta,\kappa)$ such that
			\begin{equation}\label{teo:inv_deeponet_sup1}
				\int_{\Lambda(Y_M(\Omega)) \cap V_{\mu}} \norm{\Sigma_M(T) - F^{d,\theta,\kappa}(T)}^2 \nu(dT) < \varepsilon.
			\end{equation}
		\end{theorem}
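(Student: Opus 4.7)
\medskip

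The plan is to reduce Theorem \ref{teo:DO_sup1} to an application of the universal DeepONet approximation result, Theorem \ref{theorem:infiniteapprox}, by invoking the extension $\widetilde{\Sigma}_M$ already constructed in Lemma \ref{lemma:L2_inv_sup1}. Concretely, instead of trying to approximate $\Sigma_M$ directly on the (possibly irregular) subset $\Lambda(Y_M(\Omega)) \subset V_\mu$, we work with its extension by zero $\widetilde{\Sigma}_M: V_\mu \to L^2(\Omega)$ which coincides with $\Sigma_M$ on $\Lambda(Y_M(\Omega))$ and vanishes elsewhere.

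The first step is to check that the ambient spaces meet the hypotheses of Theorem \ref{theorem:infiniteapprox}. Under Assumption \ref{sup:1}, the domain $V_\mu = L^2(H^{1/2}(\partial\Omega),\mu;H^{-1/2}(\partial\Omega))$ is a separable Hilbert space, the target space $L^2(\Omega)$ is separable as well, and $\nu$ is a finite (in fact probability) measure on $V_\mu$ with finite second moment. By Lemma \ref{lemma:L2_inv_sup1}, $\widetilde{\Sigma}_M \in L^2(V_\mu,\nu;L^2(\Omega))$, so all the hypotheses of Theorem \ref{theorem:infiniteapprox} are satisfied with $H = V_\mu$ and $W = L^2(\Omega)$.

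Applying Theorem \ref{theorem:infiniteapprox} to $\widetilde{\Sigma}_M$, for any $\varepsilon > 0$ there exist parameters $(d,\theta,\kappa) \in \mathcal{N}^{V_\mu \to L^2(\Omega)}_{\sigma,7}$ and a DeepONet realization $F^{d,\theta,\kappa}: V_\mu \to L^2(\Omega)$ such that
\[
\int_{V_\mu} \norm{\widetilde{\Sigma}_M(T) - F^{d,\theta,\kappa}(T)}_{L^2(\Omega)}^2 \, \nu(dT) < \varepsilon.
\]
Finally, restricting the domain of integration to $\Lambda(Y_M(\Omega)) \cap V_\mu$ — where $\widetilde{\Sigma}_M$ agrees with $\Sigma_M$ by construction — and using that the integrand is nonnegative on the complement, we obtain
\[
\int_{\Lambda(Y_M(\Omega)) \cap V_\mu} \norm{\Sigma_M(T) - F^{d,\theta,\kappa}(T)}^2 \, \nu(dT) \leq \int_{V_\mu} \norm{\widetilde{\Sigma}_M(T) - F^{d,\theta,\kappa}(T)}_{L^2(\Omega)}^2 \, \nu(dT) < \varepsilon,
\]
which is exactly \eqref{teo:inv_deeponet_sup1}.

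The only conceptual subtlety — and the reason Assumption \ref{sup:1} is needed at all — is guaranteeing separability of $V_\mu$: Theorem \ref{theorem:infiniteapprox} relies on choosing an orthonormal basis in the input Hilbert space in order to define the projection operator $P_{H,d}$, and without separability this basis, and hence the DeepONet architecture itself, cannot be set up. Once separability is assumed, no further work beyond the above reduction is required, since the heavy lifting (Luzin's theorem, finite-range approximation, clipping, and the finite-dimensional universal approximation) is already encapsulated inside Theorem \ref{theorem:infiniteapprox}.
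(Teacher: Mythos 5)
Your proof is correct and follows essentially the same route as the paper's: extend $\Sigma_M$ by zero to $\widetilde{\Sigma}_M$ on $V_\mu$, invoke Lemma \ref{lemma:L2_inv_sup1} for square-integrability, apply Theorem \ref{theorem:infiniteapprox} on the full space, and then restrict the integral to $\Lambda(Y_M(\Omega))\cap V_\mu$ using monotonicity of the integral. The added remark explaining why separability (Assumption \ref{sup:1}) is needed for the projection/basis machinery is a helpful clarification but does not change the argument.
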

		
		\begin{proof}
			Let $M \in (0,\infty)$. As equal as in the proof of Theorem \ref{MT2-general-case}, we will work with the extended operator $\widetilde{\Sigma}_M$ defined in \eqref{eq:ext_invmap_sup1}. Theorem \ref{theorem:infiniteapprox} and Lemma \ref{lemma:L2_inv_sup1} implies that for all $\varepsilon > 0$, there exists $(d,\theta,\kappa) \in \mathcal N_{\sigma,7}^{V_{\mu} \to L^2(\Omega)}$ such that
			\[
			\int_{V_{\mu}} \norm{\widetilde{\Sigma}_M(T) - F^{d,\theta,\kappa}(T)}^2_{L^2(\Omega)} \nu(dT) < \varepsilon.
			\]
			Now, notice that
			\[
			\begin{aligned}
				\int_{\Lambda(Y_M(\Omega)) \cap V_{\mu}} \norm{\Sigma_M(T) - F^{d,\theta,\kappa}(T)}^2_{L^2(\Omega)} &= \int_{\Lambda(Y_M(\Omega)) \cap V_{\mu}} \norm{\widetilde{\Sigma}_M(T) - F^{d,\theta,\kappa}(T)}^2_{L^2(\Omega)}\\
				&\leq \int_{V_{\mu}} \norm{\widetilde{\Sigma}_M(T)-F^{d,\theta,\kappa}(T)}^2_{L^2(\Omega)} \nu(dT).
			\end{aligned}
			\]
			Therefore, for all $\varepsilon > 0$, there exists $(d,\theta,\kappa) \in \mathcal N_{\sigma,7}^{V_{\mu} \to L^2(\Omega)}$ such that
			\[
			\int_{\Lambda(Y_M(\Omega)) \cap V_{\mu}} \norm{\Sigma_M(T) - F^{d,\theta,\kappa}(T)}^2 \nu(dT) < \varepsilon.
			\]
			This ends the proof.
		\end{proof}
		
		\begin{remark}
			Although Theorem \ref{teo:DO_sup1} gives a DeepOnet that approximates the inverse Calderón's mapping with arbitrary accuracy, notice that the assumption \ref{sup:1} may be not easy to check.
		\end{remark}
		
		There is a second theorem that can be stated with the operator $\widetilde{\pi}_{M}$, providing an approximation of the inverse Calderón's mapping in a subset of the space of bilinear continuous operators from $H^{1/2}(\partial \Omega) \times H^{1/2}(\partial \Omega)$ into $\R$.\\
		
		\medskip
		In particular, the next Theorem implies that there exists a DeepOnet such that every conductivity $a \in L^{2}(\Omega)$ can be approximated by such DeepOnet with arbitrary accuracy if its associated Dirichlet-to-Neumann operator can define a bilinear continuous map from $H^{1/2}(\partial \Omega) \times H^{1/2}(\partial \Omega)$ into $\R$. 
	}
	
	{\color{black}
		\begin{theorem}
			Let $\mu$ and $\nu$ be probability measures on $H^{1/2}(\partial \Omega)$ and $W_{\mu}$, respectively,  with finite second moment. For $M \in (0,\infty)$ let $\Sigma_M$ be the inverse Calderón's mapping resctricted to $Y_M(\Omega)$, defined as in \eqref{eq:inverse_operator}. Then, for all $\varepsilon>0$ there exists $(d,\theta,\kappa)$ such that
			\begin{equation}
				\int_{D} \norm{\Sigma_M \circ \chi (B) - F^{d,\theta,\kappa}_\varepsilon (B)}_{L^2(\Omega)}^2 \nu(dB) < \varepsilon.
			\end{equation}
		\end{theorem}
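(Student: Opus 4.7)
The plan is to mirror, almost verbatim, the strategy used for Theorem \ref{teo:DO_sup1}, replacing the ambient separable Hilbert space $V_\mu$ (whose separability required Assumption \ref{sup:1}) by the product Hilbert space $W_\mu$, which is separable without any further assumption on $\mu$. The key preparatory work has already been done in Subsection \ref{ssec:6.4}: the operator $\widetilde{\pi}_M: W_\mu \to L^2(\Omega)$ defined in \eqref{eq:inverse_operator_BL} extends $\Sigma_M \circ \chi$ by zero outside $D$, and Lemma \ref{lemma:L2inv_BL} shows that $\widetilde{\pi}_M \in L^2(W_\mu, \nu; L^2(\Omega))$ for any probability measure $\nu$ on $W_\mu$ with finite second moment.

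The first step is therefore to invoke Theorem \ref{theorem:infiniteapprox} with $H = W_\mu$, $W = L^2(\Omega)$ and $G = \widetilde{\pi}_M$. Both spaces are separable Hilbert spaces, so orthonormal bases are available for the projection and extension operators $P_{W_\mu, d}$ and $E_{L^2(\Omega), \kappa}$ that enter the definition of a DeepONet in \eqref{eq:DO-def}. Theorem \ref{theorem:infiniteapprox} then delivers, for every $\varepsilon > 0$, parameters $(d, \theta, \kappa) \in \ca{N}^{W_\mu \to L^2(\Omega)}_{\sigma, 7}$ and a corresponding realization $F^{d,\theta,\kappa} : W_\mu \to L^2(\Omega)$ such that
\begin{equation*}
\int_{W_\mu} \norm{\widetilde{\pi}_M(B) - F^{d,\theta,\kappa}(B)}_{L^2(\Omega)}^2 \nu(dB) < \varepsilon.
\end{equation*}

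The second step is to transfer this bound from the whole space $W_\mu$ to the subset $D$. Since $\widetilde{\pi}_M(B) = \pi_M(B) = \Sigma_M \circ \chi(B)$ for every $B \in D$ by the definition in \eqref{eq:inverse_operator_BL}, one has the pointwise identity
\begin{equation*}
\norm{\Sigma_M \circ \chi(B) - F^{d,\theta,\kappa}(B)}_{L^2(\Omega)}^2 = \norm{\widetilde{\pi}_M(B) - F^{d,\theta,\kappa}(B)}_{L^2(\Omega)}^2, \qquad B \in D.
\end{equation*}
Integrating against $\nu$ restricted to $D$ and using the positivity of the integrand on $W_\mu \setminus D$ yields
\begin{equation*}
\int_D \norm{\Sigma_M \circ \chi(B) - F^{d,\theta,\kappa}(B)}_{L^2(\Omega)}^2 \nu(dB) \le \int_{W_\mu} \norm{\widetilde{\pi}_M(B) - F^{d,\theta,\kappa}(B)}_{L^2(\Omega)}^2 \nu(dB) < \varepsilon,
\end{equation*}
which is the conclusion.

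The only conceptual obstacle, and the reason this statement is nontrivial beyond a citation of Theorem \ref{theorem:infiniteapprox}, is the necessity of the extension by zero on $W_\mu \setminus D$: the operator $\Sigma_M \circ \chi$ is not defined on all of $W_\mu$, so one cannot feed it directly into the universal approximation machinery. The extension $\widetilde{\pi}_M$ circumvents this at the cost of introducing a discontinuity across $\partial D$, but since Theorem \ref{theorem:infiniteapprox} only demands square-integrability and not continuity, Lemma \ref{lemma:L2inv_BL} is precisely what is needed to make the argument go through. A secondary, essentially book-keeping, issue is that one should verify that $D$ is $\nu$-measurable so that the truncation defining $\widetilde{\pi}_M$ is a legitimate Borel function; this follows from the fact that $D$ is characterized by the existence of a linear representative of the form $\Lambda_a$ with $a \in Y_M(\Omega)$, a condition expressible in terms of countably many continuous functionals on $W_\mu$.
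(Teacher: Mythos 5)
Your proof is correct and matches the paper's own argument almost line for line: extend $\Sigma_M \circ \chi$ by zero to $\widetilde{\pi}_M$ on $W_\mu$, invoke Lemma \ref{lemma:L2inv_BL} to place it in $L^2(W_\mu,\nu;L^2(\Omega))$, apply Theorem \ref{theorem:infiniteapprox}, and then restrict the resulting integral bound to $D$. The closing remark about the $\nu$-measurability of $D$ is a sensible book-keeping observation that the paper leaves implicit, but it does not change the substance of the argument.
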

		
		\begin{proof}
			Let $M \in (0,\infty)$. As equal as in the proof of Theorem \eqref{MT2-general-case} we will work with the extension of the desired operator. In particular, we work with the mapping $\widetilde{\pi}_M : W_{\mu} \to L^2(\Omega)$ defined in \eqref{eq:inverse_operator_BL}. Theorem \ref{theorem:infiniteapprox} and Lemma \ref{lemma:L2inv_BL} implies that for all $\varepsilon>0$, there exists $(d,\theta,\kappa) \in \mathcal N_{\sigma,7}^{W_{\mu} \to L^2(\Omega)}$ such that
			\[
			\int_{W_{\mu}} \norm{\widetilde{\pi}_M(B)-F^{d,\theta,\kappa}_{\varepsilon}(B)}^2_{L^2(\Omega)} \nu(dB) < \varepsilon.
			\]
			By the definition of $\widetilde{\pi}_M$ it follows that
			\[
			\begin{aligned}
				\int_{W_{\mu}} \norm{\widetilde{\pi}_M(B)-F^{d,\theta,\kappa}_{\varepsilon}(B)}^2_{L^2(\Omega)} \nu(dB) &= \int_{D \cap W_{\mu}} \norm{\Sigma_M \circ \chi (B) - F^{d,\theta,\kappa}_{\varepsilon}(B)}_{L^2(\Omega)}^2 \nu(dB) \\
				& \quad + \int_{D^c \cap W_{\mu}} \norm{F_{\varepsilon}^{d,\theta,\kappa}(B)}_{L^2(\Omega)}^{2} \nu(dB)\\
				& \geq  \int_{D \cap W_{\mu}} \norm{\Sigma_M \circ \chi (B) - F^{d,\theta,\kappa}_{\varepsilon}(B)}_{L^2(\Omega)}^2 \nu(dB).
			\end{aligned}
			\]
			Therefore, for all $\varepsilon>0$ there exists $(d,\theta,\kappa) \in \mathcal N_{\sigma,7}^{W_{\mu} \to L^2(\Omega)}$ such that
			\[
			\int_{D} \norm{\Sigma_M \circ \chi (B) - F^{d,\theta,\kappa}_{\varepsilon}(B)}_{L^2(\Omega)}^2 \nu(dB) < \varepsilon.
			\]
		\end{proof}
	}
}	

}

%

\section{Discussion}\label{Sect:6}

\subsection{Conclusions}

Theorems \ref{MT1}, \ref{MT2} {\color{black} and \ref{MT3}} provide suitable approximation results for the Dirichlet-to-Neumann operator and the {\color{black} direct and inverse} Calder\'on's mapping{\color{black}s} by DeepONets, an infinite dimensional version of the classical deep neural networks. These density results are universal, in the sense that the main tool used in this paper, Theorem \ref{theorem:infiniteapprox}, has a wide range of applications. In a soon-to-come, forthcoming publication, we will review the DNN approximation theory of other highly relevant dispersive operators, such as the \emph{KdV, NLS and Wave solution maps}. We conjecture that all these solution mappings are approached by well-chosen DeepONets.

\medskip

A very important aspect of our results is its general validity for the Calder\'on's mapping. We do not require specific approximations to the problem nor any suitable additional hypothesis other than further regularity of the conductivity $a$. The setting is general and has prospective applications to other Calder\'on's problems not considered in this paper. In this sense, Theorems \ref{MT1}, \ref{MT2} {\color{black} and \ref{MT3}} are general statements about the nature of the Calder\'on's problem. 

\medskip

A drawback of this method is the lack of a suitable way to explicitly compute the DeepONet with the current techniques. This is standard in a problem with such a level of generality, but it is also a desired improvement for forthcoming publications as well.

\subsection{Stability and reconstruction analysis}

From Alessandrini \cite{A}, see also \cite{Sal08}, it is well-known that the Calder\'on's problem enjoys very bad stability estimates, usually called logarithm estimates.  In our setting, from \eqref{eq:DO_Lambda_0} and \eqref{lomejor} we can infer less exact estimates on the variation of the Dirichlet-to-Neumann operator. However, on the other hand we reduce the stability estimate to a less complex problem, which is the understanding of the DeepONet found in \eqref{lomejor}. Additionally, the reconstruction of the conductivity $a$ is highly related to the reconstruction of the inverse of the found DeepONet. 

\subsection{Unsolved problems. Numerical aspects}

There are interesting aspects left open in this work that might be considered in forthcoming publications. First of all, there is the always interesting question of reproducing numerically the theoretical results obtained here. This is an interesting but not easy question but it requires a deep and long analysis that is out of the scope of this paper. Secondly, there is the question of expanding these results to other Calder\'on's operators, such as those considered in quasilinear problems (see e.g. \cite{MU20} and many references therein), or more complex geometries. Finally, related deep learning methods, such as the highly studied PINNs, could have theoretical implications in the efficient solvability of the Calder\'on's problem, for instance in the spirit of the results stated in \cite{PINNsZurich}. The relationship between PINNs and our results is still unknown to us. Once again, these are interesting open problems that may be considered elsewhere.

\bibliographystyle{alpha}

\newcommand{\etalchar}[1]{$^{#1}$}


\end{document}